\documentclass{amsart}

\addtolength{\textwidth}{2.4cm}
\addtolength{\evensidemargin}{-2.05cm}
\addtolength{\oddsidemargin}{-2.15cm}
\addtolength{\textheight}{1.1cm} \addtolength{\topmargin}{-0.5cm}

\usepackage{amsmath}
\usepackage{amssymb}
\usepackage{latexsym}
\usepackage{enumerate}
\usepackage{amscd}
\usepackage{inputenc}
\usepackage[all]{xy}
\usepackage{graphics}
\usepackage{epsfig}
\usepackage{accents}
\usepackage[active]{srcltx}
\usepackage{tikz}
\usetikzlibrary{matrix}

\usepackage{color}

\DeclareMathOperator{\HH}{H}
\newcommand{\Fu}{\operatorname{F}} 


\newcommand{\R}{\mathbb{R}}
\newcommand{\C}{\mathbb{C}}
\newcommand{\Z}{\mathbb{Z}}

\newcommand{\cH}{\mathcal{H}}

\newcommand{\fg}{\mathfrak{g}}
\newcommand{\fsl}{\mathfrak{sl}}
\newcommand{\fsu}{\mathfrak{su}}
\newcommand{\fb}{\mathfrak{b}}
\newcommand{\ft}{\mathfrak{t}}
\newcommand{\fut}{\mathfrak{ut}}

\newcommand{\fh}{\mathfrak{h}}

\newcommand{\cur}[1]{\widetilde{#1}}

\newtheorem{definition}{Definition}
\newtheorem{lemma}{Lemma} 
\newtheorem{proposition}{Proposition} 
\newtheorem{remark}{Remark}
\newtheorem{theorem}{Theorem} 
\newtheorem{corollary}{Corollary} 
 
\newtheorem{notation}{Notation}


\newcommand{\tr}{\operatorname{tr}}

\renewcommand{\hom}{\operatorname{Hom}}

\begin{document}

\title[Volumes of representations]{Volumes of $\mathrm{SL}_n(\C)$-representations  of  hyperbolic 3-manifolds}

\author{Wolfgang Pitsch}
\address{Universitat Aut\`onoma de Barcelona \\ Departament de Matem\`atiques\\
E-08193 Bellaterra, Spain \\ and BGSMATH}
\email{pitsch@mat.uab.es}
\thanks{First author was supported by Mineco grant  MTM2016-80439-P}

\author{Joan Porti}
\address{Universitat Aut\`onoma de Barcelona \\ Departament de Matem\`atiques\\
E-08193 Bellaterra, Spain\\ and BGSMATH}
\email{porti@mat.uab.cat}
\thanks{Second author was supported by Mineco grant MTM2015--66165-P }


\keywords{Volume, hyperbolic manifold, characteristic class, representation variety, Sch\"afli formula, flat bundle}
\subjclass[2010]{ 14D20 (57M50 22E41 57R20)}

\begin{abstract}
Let $M$ be a compact oriented three-manifold whose interior is hyperbolic  of finite volume.
We prove a variation formula for the volume on the 
variety of representations of $M$ in $\operatorname{SL}_n(\mathbb C)$.
Our proof follows the strategy of Reznikov's rigidity when $M$ is closed, in particular we use Fuks'
approach to variations by means of Lie algebra cohomology.
When $n=2$,  we get back Hodgson's
 formula for variation of  volume on the space of hyperbolic Dehn fillings.
 Our formula also yields the variation of volume on the space of
  decorated triangulations
  obtained by \cite{BFG} and \cite{DGG}.
 \end{abstract}
\maketitle

\section{Introduction}\label{sec:intro}

Let $M$ be a compact oriented three-manifold whose interior admits a complete hyperbolic metric of finite volume. 
There is a well defined notion of  
volume of a representation of its fundamental group  $\pi_1 {M}$
in $ \mathrm{SL}_n(\mathbb C) $, see Definition~\ref{def:volrepborl} for instance, and here we view the volume as a function defined on the variety of representations $\hom(\pi_1 M, \mathrm{SL}_n(\mathbb C) )$.
Bucher-Burger-Iozzi~ \cite{BBIarXiv14} have shown 
that the volume is maximal precisely
at the composition of the lifts of the holonomy with the irreducible representation
$ \mathrm{SL}_2(\mathbb C)\to \mathrm{SL}_n(\mathbb C) $.
If $M$ is furthermore closed, then this volume function is constant on connected components 
of $\hom(\pi_1 M, \mathrm{SL}_n(\mathbb C) )$
(see~\cite{MR1412681}) but  in the non-compact case the volume can vary locally. 
When $n=2$ this variety of representations (up to conjugation) contains the space of hyperbolic structures on the manifold, 
and the volume has been intensively studied in this case, starting with the seminal work of Neumann and Zagier \cite{NZ};  
in particular, a 
variation formula was obtained in Hodgson's thesis
\cite[Chapter~5]{Hodgson}, by means of Schl\"afli's variation formula for 
polyhedra in hyperbolic space. The variation of the volume was also discussed  
in \cite{BFG} when $n=3$, and in \cite{DGG} for general $n$,
through the study of decorated ideal triangulations of 
 manifolds.  

The purpose of this paper is to produce an infinitesimal formula for the  variation of the volume in 
$\hom( \pi_1 {M}, \mathrm{SL}_n(\mathbb C) )$ for arbitrary $n$ and for differentiable 
deformations of any representation, independently of the existence of 
decorated triangulations. The variety of representations has deformations that are nontrivial up to conjugation;
more precisely  the component of  $\hom( \pi_1 {M}, \mathrm{SL}_n(\mathbb C) )/ \mathrm{SL}_n(\mathbb{C})$ 
that contains the representation of maximal volume has dimension $(n-1)k$
\cite{MR2993065}, where $k$ is the number of components of $ \partial M$.
Our results are proved in  $\hom(\pi_1 M,\mathrm{SL}_n(\mathbb C))$, but they apply with no change to $\hom(\pi_1 M,\mathrm{PSL}_n(\mathbb C))$.

The boundary $\partial M$  of ${M}$   
consists of $k\geq 1$ tori, $T^2_1,\ldots, T^2_ k$.
Fix the orientation of $\partial {M}$ corresponding to the outer normal, as in 
Stokes theorem, and choose $l_i,m_i$ ordered generators of $\pi_1(T^2_i)$, so that 
if we view them as oriented curves, they generate the induced orientation. For 
instance, for the exterior of an oriented knot in $S^3$, we can take $l_1$ as a 
longitude and $m_1$
as a meridian, with $l_1$ following the orientation of the knot and $m_1$ as 
describing the positive sense of rotation. For a complex number $z \in 
\mathbb{C}$, denote by $\Re(z)$ and $\Im(z)$ its real and imaginary parts 
respectively. Assume now $\rho_t$ is a differentiable path of representations in $\hom(\pi_1 
{M}, \mathrm{SL}_n(\mathbb C) )$ parametrized by $t\in I\subset \R$.  
As a consequence of the  Lie-Kolchin theorem, there exist $1$-parameter families 
of matrices $A_i(t)\in \mathrm{SL}_n(\mathbb{C})$ and  of  upper triangular 
matrices  $a_i(t), b_i(t)\in\mathfrak{sl}_n(\mathbb{C} ) $ so that
\begin{equation}
\label{eqn:LieKolchin}
\rho_t(l_i)= A_i(t) \exp( a_i(t))A_i(t)^{-1}\qquad\textrm{ and }\qquad \rho_t(m_i)= A_i(t) \exp( b_i(t))A_i(t)^{-1}.
 \end{equation}

Our main result states:

\begin{theorem}
 \label{Theorem:main}
 Assume that $A_i(t)$, $a_i(t)$, and $b_i(t)$ as in \eqref{eqn:LieKolchin} are differentiable. Then the volume is differentiable and
 $$
\frac{d\phantom{t}}{dt}\operatorname{vol} ({M},\rho_t)= \sum_{i=1}^k \tr( \Re( b_i)\, \Im (\dot a_i) - \Re (a_i) \, \Im (\dot b_i) ).
$$
\end{theorem}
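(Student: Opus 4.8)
The plan is to realize the volume as the integral over $M$ of a closed $3$-form obtained by pulling back, via the $\rho_t$-equivariant developing map $D_t\colon \tilde M\to X$ into the symmetric space $X=\operatorname{SL}_n(\C)/\operatorname{SU}(n)$, the invariant $3$-form $\omega$ that represents the Borel class of Definition~\ref{def:volrepborl}. Writing $\theta_t$ for the $\fsl_n(\C)$-valued flat connection form attached to $\rho_t$, Fuks' description of continuous cohomology through relative Lie algebra cohomology $H^\bullet(\fsl_n(\C),\fsu(n))$ lets me express $D_t^\ast\omega$ as an explicit $\operatorname{Ad}$-invariant polynomial in the $\mathfrak p$-part of $\theta_t$ (the part in $i\,\fsu(n)$ under the Cartan decomposition). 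I would first record this normalization with care, since the real and imaginary parts appearing in the final formula originate in $\omega$ being the imaginary part of a complex-valued invariant cocycle, and the $\Re/\Im$ splitting will track the Cartan decomposition of the upper-triangular data $a_i,b_i$.

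Next I would differentiate in $t$. Because each $D_t^\ast\omega$ is closed (flatness of $\theta_t$), its $t$-derivative is exact, and the Lie-algebra-cohomology machinery furnishes an explicit transgression $2$-form $\eta_t$, bilinear in $\theta_t$ and $\dot\theta_t$, with $\tfrac{d}{dt}D_t^\ast\omega=d\eta_t$. Stokes' theorem then localizes the variation to the boundary,
$$
\frac{d\phantom{t}}{dt}\operatorname{vol}(M,\rho_t)=\int_{\partial M}\eta_t=\sum_{i=1}^k\int_{T^2_i}\eta_t .
$$
Since $M$ is noncompact I would perform this on the truncation obtained by cutting off each cusp along a horotorus, show that the volume is the limit of the truncated integrals, and verify that the inner (horotorus) contributions converge so that only the genuine boundary tori $T^2_i$ survive.

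Finally I would evaluate each boundary term. After conjugating $\rho_t|_{\pi_1(T^2_i)}$ by $A_i(t)$ its holonomy becomes the abelian pair $\exp(a_i),\exp(b_i)$ with $a_i,b_i$ upper triangular; invariance of $\omega$, hence of $\eta_t$, under this conjugation leaves the integral unchanged. On the flat torus the developing map into the corresponding flat of $X$ is explicit, and $H^1(T^2_i;\fsl_n(\C)_{\operatorname{Ad}\rho_t})$ is spanned by the values of the variation on the generators $l_i,m_i$, i.e.\ by $a_i,\dot a_i$ and $b_i,\dot b_i$. Computing $\int_{T^2_i}\eta_t$ then reduces to the cup-product pairing $H^1\times H^1\to H^2(T^2_i)\cong\R$ paired against the trace form, which I expect to yield precisely $\tr\!\big(\Re(b_i)\,\Im(\dot a_i)-\Re(a_i)\,\Im(\dot b_i)\big)$; the antisymmetry in $(l_i,m_i)$ and the $\Re/\Im$ splitting are exactly what the imaginary part of the invariant cocycle produces.

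The main obstacle is twofold. Analytically, the delicate point is the cusp truncation: establishing that the truncated volumes converge and are differentiable, and that the horotorus integrals of $\eta_t$ tend to zero, so that Stokes' theorem legitimately identifies $\tfrac{d}{dt}\operatorname{vol}$ with the finite sum over the $T^2_i$. Algebraically, the harder computation is pinning down the transgression $2$-form $\eta_t$ in closed form from Fuks' cocycle and matching its normalization, via the Cartan decomposition along the boundary, to the explicit trace expression.
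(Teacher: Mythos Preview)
Your overall strategy---show via Fuks' machinery that the $t$-derivative of the pulled-back $3$-form is exact, apply Stokes, and evaluate on the boundary tori---is the same as the paper's. But there is a genuine gap at the starting point which propagates to the endpoint. You propose to realize $\operatorname{vol}(M,\rho_t)$ as $\int_M D_t^\ast\omega$ (possibly as a limit over cusp truncations). First, $M$ is already compact with boundary $\partial M=\bigsqcup T_i^2$; there is no separate ``horotorus'' versus ``genuine boundary'', and no limiting procedure is needed. More importantly, $\int_M D_t^\ast\varpi$ by itself is \emph{not} the volume: it depends on the choice of developing map, because $\varpi$ does not vanish along the Borel directions. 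The paper's Definition~\ref{def:defvolplusprecis} is
\[
\operatorname{vol}(M,\rho_t)=\int_M D_t^\ast\varpi-\sum_r\int_{T_r}D_{r,t}^\ast\beta_r,
\]
where $(\varpi,\{\beta_r\})$ is a \emph{relative} cocycle and $\beta$ is the unique \emph{bounded} $B$-invariant primitive of $\varpi|_B$ (Proposition~\ref{prop:trivsurB}); this correction is forced by the passage through bounded cohomology and is what you are missing.

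This omission reappears in your boundary evaluation. Your transgression $\eta_t$ is the paper's $\gamma\in C^1(\fsl_n;\fsl_n^\vee)$ of Lemma~\ref{lem:trivvarvol}. Evaluating $\gamma$ alone on the torus yields $\gamma(a)(\dot b)-\gamma(b)(\dot a)$, which carries an extra contribution from the \emph{unipotent} parts of $a,b,\dot a,\dot b$. In the paper these unipotent terms are cancelled precisely by $\operatorname{var}(\beta)$, via the identity $i^\ast\gamma=\zeta+\operatorname{var}(\beta)$ established in the proof of Proposition~\ref{prop:cocyclevariation}; only the diagonal cocycle $\zeta$ survives and gives the trace formula. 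Without the $\beta$-term your computation will not produce $\tr\!\big(\Re(b_i)\Im(\dot a_i)-\Re(a_i)\Im(\dot b_i)\big)$ for general upper-triangular $a_i,b_i$. The fix is to work from the outset with the relative cocycle $(\varpi,\{\beta\})$ rather than with $\varpi$ alone.
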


For $n=2$ this formula is precisely Hodgson's formula in the Dehn filling space, 
and for $n=3$ it is equivalent  
to the variation on the space of decorated triangulations by 
 Bergeron, Falbel and Guillloux \cite{BFG,Guilloux},
 for $n=3$, and by Dimofte, Gabela and 
Goncharov for general $n$~\cite{DGG}. See Section~\ref{subsection:n=2} below.

The hypothesis on differentiability of $A_i(t)$, $a_i(t)$, and $b_i(t)$ in Theorem~\ref{Theorem:main} is necessary, as the volume form is not differentiable on
$\hom(\pi_1 M, \mathrm{SL}_n(\mathbb C) )$, see Lemma~\ref{lem:volnondiff} below. Notice that the volume formulas of 
\cite{NZ, BFG,DGG} are defined in spaces of 
decorated triangulations, these are not open subsets of 
$\hom(\pi_1 M, \mathrm{SL}_n(\mathbb C) )/ \mathrm{SL}_n(\mathbb C) ) $ but  rather branched coverings of it.
A decoration yields a choice of Borel subgroup containing the representation of the peripheral subgroup, 
thus a differential path of decorated triangulations implies differentiability of the terms in
\eqref{eqn:LieKolchin}. In the appropiate  context, the choice
of Borel subgroups
amounts to work in the
so called augmented variety of 
representations \cite{MR3355211}.

%

Our argument is a generalization of  Reznikov's proof of the rigidity of the 
volume for closed manifolds \cite{MR1412681}. At the heart of Reznikov's 
argument is the fact that the volume of a representation $\rho$ can be seen as a 
characteristic class of the horizontal foliation on the total space of the flat 
principal bundle on ${M}$ induced by $\rho$.  This characteristic class comes from 
a cohomology class of the  Lie algebra 
$\mathfrak{g}=\mathfrak{sl}_n(\mathbb{C})$,
i.e.~it is induced by a class in $\HH^3(\mathfrak{g})$. The study of the variation 
of this characteristic class then relies on results by Fuks in \cite{MR874337}, he shows in particular
 that the variation of volume itself  can  be interpreted as a 
characteristic class of a foliation and this class stems from a cohomology 
class 
in $\HH^2(\mathfrak{g};\mathfrak{g}^\vee)$, where $\mathfrak{g}^\vee$ is the dual 
Lie algebra, viewed as a $\mathfrak{g}$-module. But since $\mathfrak{g}$ is semi-simple, this cohomology group is 
trivial, as follows by a classical result of Cartier \cite{cartier}, see 
Corollary~\ref{cor annulcohoslsldual}, hence the volume for $M$ compact is 
locally constant. We aim to follow the same outline in the non-closed case, 
which technically amounts to extend the homological tools used by Reznikov to a 
relative setting. Next we explain the plan of this work.

Firstly, in Section~\ref{sec:relcoho} we  develop the homological tools needed 
for our construction: we give a definition of cohomology groups of an object 
relative to a family of subobjects. As it is difficult to find a single place 
in the literature  where all the relative versions of the maps we need are 
explained,  we start by defining in a unified way  the relative cohomology 
constructions we will use, this is inspired by the work of Bieri-Eckmann 
\cite{MR509165} on relative cohomology of groups, but with a stronger emphasis 
on the pair object-subobject. The relative cohomology groups are devised in such 
a way  that, by definition, if $G$ is an object and $\{A\}$ is a family of 
subobjects, then the cohomology of $G$ relative to $\{A\} $ fits into a long 
exact sequence:
\[
\xymatrix{
\cdots \ar[r]& \HH^n(G;\{A\}) \ar[r] & \HH^n(G) \ar[r] &  \prod \HH^n(A) \ar[r] & \HH^{n+1}(G;\{A\})  \ar[r] & \cdots
}
\]
 We also discuss the relations between our definitions and previous existing notions of relative cohomology groups.

Secondly, in~Section~\ref{sec:relcharvar} we use the relative cohomological 
tools of the previous section to give relative versions of the constructions of 
Fuks~\cite{MR874337} on characteristic classes of foliations and variations of 
those. This gives the conceptual framework in which we can state and prove our 
formula. Up to this point we work in a general context so as to pave the way for future applications. 

In the compact case the volume of a representation $\rho: \pi_1(M) 
\rightarrow \mathrm{SL}_n(\C)$ is defined as a pull-back of a universal 
hyperbolic volume class in the continuous cohomology group 
$\HH^3_c(\mathrm{SL}_n(\C))$; since the  peripheral subgroups of a non-compact 
finite volume hyperbolic manifold are all 
abelian, the cohomology group where we want to look for a universal relative 
volume class is $\HH^3_c(\mathrm{SL}_n(\C);\{B\})$: the continuous cohomology 
groups of $\mathrm{SL}_n(\C)$ relative to the family $\{B\} $ consisting of its Borel 
subgroups. This program for constructing the volume  is carried out and explained in Section~\ref{sec:volrelatif} 
where we also  show that the definition through relative cohomology corresponds 
to the common definitions in literature, for instance the one given 
in~\cite{BBIarXiv14} via the use of the transfer map in continuous-bounded 
cohomology. 
The key point for our construction is  the crucial fact that continuous-bounded 
cohomology of an amenable group is trivial, hence   we have a canonical 
isomorphism $\HH^{3}_{cb}(\mathrm{SL}_n(\C);\{B\}) \rightarrow 
\HH^3_{cb}(\mathrm{SL}_n(\C))$ that allows to interpret the classical universal 
hyperbolic volume cohomology class as a relative cohomology class.

The study of the variation of the volume requires us then to find explicit 
cocycle representatives for the relative volume cohomology class. This is the 
object of Section~\ref{sec:varvol}. The main ingredient in this part of our 
work is the fact, underlying the van Est isomorphism connecting the continuous 
cohomology of a real connected Lie group $G$ with maximal compact subgroup $K$ 
and the  cohomology of its Lie algebra, that $\Omega^\ast_{dR}(G/K)^G$, the equivariant de Rham complex of 
the symmetric space $G/K$, computes the continuous 
cohomology of $G$; our cocycle will then appear as a bounded differential 
$3$-form on $G/K$ with a specific choice of trivialization on each Borel 
subgroup. Here we also show how to express the volume and its variation as a characteristic
class on the total space of the flat bundle induced by a representation.

Finally in Section~\ref{sec:varformula} we collect our efforts and  prove our variation 
formula and give some consequences.

{\bf Acknowledgements :} We would like to  thank Julien March\'e for helpful
 conversations on this subject. We also would like to thank the anonymous referee for suggesting many improvements to the present work.

\section{Relative cohomology}\label{sec:relcoho}
    Our approach to define relative cohomology relies on the following three crucial points:
\begin{enumerate}
 \item The existence of  functorial cochain complexes that compute the cohomology 
groups we want to relativize.
 \item The fact that given a family of objects $(A_i)_{i \in I}$ and coefficients $V_i$ and functorial 
cochain complexes $C^\ast(A_i;V_i)$, the product chain complex $\prod_{ i \in I} 
C^\ast(A_i;V_i)$ has as $n$-cohomology the product of cohomologies $\prod_{i \in 
I} \HH^n(A_i;V_i)$.
 \item The fact that the cone of a cochain map between chain complexes is  functorial in the 
homotopy  category of  complexes of $\mathbb{R}$-vector spaces. 
\end{enumerate}

\subsection{The cone construction}

Consider two cochain complexes of $\mathbb{R}$-vector spaces, i.e. differentials 
rise degree by one,  and a chain map $f: K^\ast \rightarrow L^\ast$. By 
definition $\operatorname{Cone}(f)^\ast$, the cone of $f$, is the cochain complex given by:
\[
\operatorname{Cone}(f)^n = L^{n-1}\oplus K^{n} \textrm{ and } d_{\operatorname{Cone}(f)} =  \left(  \begin{matrix} 
-d_L & f \\ 0 & d_K \end{matrix}\right).
\] 
where $d_{\operatorname{Cone}(f)}$ acts on column vectors.

One checks  that, as expected in any reasonable definition of a \emph{relative} 
cocycle, an element $\left( \begin{matrix}
 l \\ k
\end{matrix}
\right) \in L^{n-1}  \oplus K^n $ is an $n$-cocycle if and only if $k$ is a 
cocycle in $K^n$ and $d_L(l)= f^n(k)$.  For such a pair we will call $k$ the 
\emph{absolute} part and $l$ the \emph{relative} part.

This construction is functorial in the following sense.
If we have a commutative square of maps of chain complexes:
\[
\xymatrix{
K \ar[r]^f \ar[d]_r & L \ar[d]^s \\
A \ar[r]_g & B
}
\]
then we have an induced chain map: $\operatorname{Cone}(r,s)\colon \operatorname{Cone}(f)^\ast \rightarrow \operatorname{Cone}(g)^\ast$, given by 
$$\operatorname{Cone}(r,s) = \begin{pmatrix}
s & 0 \\
0 & r
\end{pmatrix}
$$
The main use of $\operatorname{Cone}(f)^\ast$ is that its homology interpolates between that of $L$ and that of $K$; indeed by construction there is a short exact sequence of complexes:
\[
 0 \rightarrow L[-1] \rightarrow \operatorname{Cone}(f) \rightarrow K \rightarrow 0,
\]
where $L[-1]$ is the shifted complex $L[-1]^n =L^{n-1}, d_{L[-1]} = -d_L$. This sequence splits in each degree and by standard techniques gives rise to a long exact sequence in cohomology.
\[
\xymatrix@R=10pt@C-6pt{
\cdots \ar[r] & \HH^{n-1}(L) \ar[r] & \HH^{n}(\operatorname{Cone}(f)) \ar[r] & \HH^{n}(K) \ar[r]^\delta & \HH^n(L) \ar[r] & \cdots
}
\]

One checks directly by unwinding the definitions that the connecting homomorphism  $\delta: \HH^{\ast}(K) \rightarrow \HH^\ast(L)$ coincides with $\HH^\ast(f)$. As expected, if we are given a morphism $(r,s)$ between maps of cochain complexes, then we will have an induced commuting ladder in cohomology:
\[
\xymatrix@R=10pt@C-6pt{
\cdots \ar[r] & \HH^{n-1}(L) \ar[r] \ar[dd]^{\HH^{n-1}(s)} & \HH^{n}(\operatorname{Cone}(f)) \ar[r] \ar[dd]^{\HH^{n}(\operatorname{Cone}(r,s))} & \HH^{n}(K) \ar[r]^\delta \ar[dd]^{\HH^{n}(r)} & \HH^n(L) \ar[dd]^{\HH^{n}(s)} \ar[r] & \cdots \\
 & & & & & \\
\cdots \ar[r] & \HH^{n-1}(B) \ar[r] & \HH^{n}(\operatorname{Cone}(g)) \ar[r] & \HH^{n}(A) \ar[r]^\delta & \HH^n(B) \ar[r] & \cdots
}
\]

\subsection{Relative cohomology}

\begin{definition}\label{def relhomology}

Let $\HH^\ast$ be our cohomology theory, possibly with coefficients (e.g. discrete group cohomology). If the cohomology theory admits
coefficients we assume that the functorial cochain complexes computing  the cohomology with coefficients are functorial in both variables. 
 
   Let $G$ be an object (Lie algebra, Lie group, manifold etc.) and $(A_i)_{i \in I}$ a family of subobjects, possibly with repetitions. If the theory admits coefficients we consider also a coefficient $V$ for the object $G$, coefficients $W_i$ for each object $A_i$ and maps between coefficients compatible with the inclusions $A_i \hookrightarrow G$, so that we have an induced map for each $i \in I$:   $C^\ast(G;V) \rightarrow C^\ast(A_i;W_i).$

  Then we define the relative cohomology of $G$ with coefficients in $V$ with 
respect to  $\{A_i\},\{W_i\}$ and we denote by  $\HH^\ast(G,\{A_i\};V,\{W_i\})$, 
the cohomology of the cone of the canonical map $C^\ast(G,V) \rightarrow  
\prod_{i\in I}C^\ast(A_i,W_i)$.
\end{definition}
As usual, if both coefficients $V$ and the $W_i$'s are the ground field $\mathbb{R}$, then we simply write 
$\HH^\ast(G,\{A_i\})$ for the relative cohomology group.
Concretely, a relative $n$-cocycle in $C^n(G,\{A_i\}; V, \{W_i\})$ is a pair 
$(c,\{a_i\}_{i \in I})$ where $c$ is an ordinary $n$-cocycle for $G$ with 
coefficients in $V$  which is a coboundary (i.e. trivial) on each subobject 
$A_i$ when the coefficients are restricted to $W_i$, together with a specific 
trivialization $a_i$ on each subobject $A_i$.

The following properties of the relative cohomology groups are immediate from 
the  functoriality of the cochain complexes $C^\ast(G;V)$ and $C^\ast(A_i;W_i)$ 
and that of the cone construction:

\begin{proposition}\label{prop:proprelcohogrp}
\begin{enumerate}
\item The relative cohomology groups $\HH^\ast(G,\{A_i\};V,\{W_i\})$  are functorial in both pairs $(G, ( A_i)_{i \in I})$  and $(V,\{W_i\})$.
\item The relative  cohomology groups fit into a long exact sequence:
\begin{multline*}
 \cdots\longrightarrow  \prod_{i \in I} \HH^{n-1}(A_i;W_i) \overset\delta\longrightarrow  \HH^{n}(G,\{A_i\};V,\{W_i\})  \longrightarrow \HH^{n}(G;V)  \\
 \longrightarrow \prod_{i \in I} \HH^{n}(A_i,W_i) \longrightarrow\cdots
\end{multline*}

\item If $J \subset I$ is a subset of the indexing family for the subobjects $A_i$, then we have an  induced natural transformation  in relative cohomology 
$$\HH^\ast(G,\{A_i\}_{i \in I};V,\{W_i\}_{i \in I}) \rightarrow \HH^\ast(G,\{A_i \}_{i \in J};V,\{W_i\}_{i \in J}).$$
\end{enumerate}
\end{proposition}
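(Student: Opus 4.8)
The plan is to note that all three statements are formal consequences of the two structural facts already recorded for the cone, namely its functoriality through the map $\operatorname{Cone}(r,s)$ and the short exact sequence of complexes attached to it, together with the three crucial points listed at the start of the section, in particular that $\HH^n\bigl(\prod_{i\in I} C^\ast(A_i;W_i)\bigr) = \prod_{i\in I} \HH^n(A_i;W_i)$. Throughout I would write $f\colon C^\ast(G;V) \to \prod_{i\in I} C^\ast(A_i;W_i)$ for the canonical map whose cone defines $\HH^\ast(G,\{A_i\};V,\{W_i\})$.

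For part (2) I would feed $f$ into the short exact sequence $0 \to L[-1] \to \operatorname{Cone}(f) \to K \to 0$ with $K = C^\ast(G;V)$ and $L = \prod_{i\in I} C^\ast(A_i;W_i)$. The associated long exact sequence reads $\cdots \to \HH^{n-1}(L) \to \HH^n(\operatorname{Cone}(f)) \to \HH^n(K) \xrightarrow{\delta} \HH^n(L) \to \cdots$, and it then suffices to rewrite the three kinds of terms: $\HH^n(\operatorname{Cone}(f))$ is by definition $\HH^n(G,\{A_i\};V,\{W_i\})$, $\HH^n(K)$ is $\HH^n(G;V)$, and by the second crucial point $\HH^n(L) = \prod_{i\in I}\HH^n(A_i;W_i)$. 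Since the connecting map $\delta$ was already identified with $\HH^\ast(f)$, which is precisely the product of the restriction maps $\HH^n(G;V) \to \prod_{i\in I} \HH^n(A_i;W_i)$, the sequence becomes the asserted one, with the degree shift $L[-1]$ accounting for the index $n-1$ on the leftmost term.

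For part (1) I would first fix what a morphism of pairs $(G,(A_i)_{i\in I}) \to (G',(A'_j)_{j\in I'})$ is, namely a morphism $G\to G'$ of underlying objects sending each $A_i$ into some $A'_{\sigma(i)}$ for a map $\sigma\colon I\to I'$, together with coefficient maps compatible with these inclusions. By contravariance of $C^\ast(-;-)$ in the object, this produces chain maps $r\colon C^\ast(G';V')\to C^\ast(G;V)$ and, factor by factor through $\sigma$ and restriction, a map $s\colon \prod_{j\in I'} C^\ast(A'_j;W'_j) \to \prod_{i\in I} C^\ast(A_i;W_i)$; compatibility of the inclusions with the morphism makes the square relating $s,r$ to the canonical maps $f',f$ commute. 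Applying the functoriality of the cone yields $\operatorname{Cone}(r,s)\colon \operatorname{Cone}(f')\to\operatorname{Cone}(f)$, hence a map $\HH^\ast(G',\{A'_j\};\ldots)\to \HH^\ast(G,\{A_i\};\ldots)$; preservation of composition and identities is inherited from that of the cochain complexes and of $\operatorname{Cone}(r,s)$, and compatibility with the long exact sequence of part (2) is exactly the commuting ladder already established. Functoriality in the coefficient pair $(V,\{W_i\})$ is obtained the same way, using functoriality of the cochain complexes in the coefficient variable. Finally, part (3) is the special case of this applied to the identity on $G$ together with the projection $\prod_{i\in I}C^\ast(A_i;W_i)\to \prod_{i\in J}C^\ast(A_i;W_i)$, which commutes with the canonical maps and so defines the desired natural transformation by the same $\operatorname{Cone}(r,s)$.

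Since the argument is entirely formal, there is no deep obstacle; the only point demanding genuine care rather than routine diagram chasing is the bookkeeping in part (1): one must pin down the precise notion of a morphism of pairs, track the variance in the object versus the coefficient variable, and verify that the matching data $\sigma$ and the coefficient maps make the relevant square of cochain complexes commute \emph{on the nose}, so that $\operatorname{Cone}(r,s)$ is literally defined rather than merely up to homotopy.
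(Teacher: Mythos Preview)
Your proposal is correct and follows exactly the approach the paper intends: the paper simply states that these properties ``are immediate from the functoriality of the cochain complexes $C^\ast(G;V)$ and $C^\ast(A_i;W_i)$ and that of the cone construction,'' and you have faithfully unpacked precisely this, invoking the short exact sequence $0 \to L[-1] \to \operatorname{Cone}(f) \to K \to 0$ and the map $\operatorname{Cone}(r,s)$ already set up in the preceding subsection. Your added care about the notion of morphism of pairs and the on-the-nose commutativity of the square is more detail than the paper supplies, but it is exactly the bookkeeping implicit in the word ``immediate.''
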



\subsection{Examples}

The different objects and cohomologies we have in mind are:
\begin{enumerate}
 
 \item Continuous or smooth cohomology of a Lie group. Here $G$ is a Lie group, for our purposes
 $\mathrm{SL}_n(\C)$, and each $A_i$ is a closed subgroup, for us a  Borel subgroup of $G$. 
 We take for $C^\ast(G;\R)$ the continuous or smooth \emph{normalized}  bar 
resolution $C^\ast_c(G;\R)$ or $C^\ast_\infty(G;\R)$ \cite[Chap. IX]{MR554917}. 
In this case, by a classical result of Hoschild-Mostow,  the canonical inclusion 
map $C^\ast_\infty(G;\R) \rightarrow C^\ast_c(G;\R)$ is a quasi-isomorphism.
 
 Another functorial way to compute the cohomology underlies  van Est theorem 
(see Section~\ref{subsec volascharac}). Given a semi-simple Lie group with 
associated symmetric space $G/K$, then the subcomplex of the de Rham complex of 
$G$-invariant differential forms computes the continuous cohomology of $G$: 
$\HH^\ast(\Omega_{dR}(G/K)^K) \simeq \HH^\ast_c(G;\mathbb{R})$. This resolution is 
functorial in the category of pairs semi-simple Lie group -- maximal compact 
subgroup.  
 \item Continuous-bounded cohomology. Since the only case we are interested in 
is  for  Lie groups with trivial coefficients we may use the cochain complex of 
continuous-bounded functions $C_{cb}^\ast(G;\R)$. 
 \item Cohomology of discrete groups. Here $G$ is a discrete group, $A_i$ a 
family of subgroups and $C^\ast(G;\R)$ stands for the usual bar resolution. This 
can of course be viewed as a particular case of continuous cohomology.
 \item De Rham cohomology of manifolds. In this case $G$ is a smooth manifold, 
$A_i$ a family of smooth submanifolds, typically the connected components of 
the boundary, and $C^\ast(G;\R)= \Omega^\ast_{dR}(G)$ is the de Rham complex of 
smooth differential forms on $M$.
 \item Lie group cohomology \cite[Chap. 7]{MR1269324}. Here $G$ and $A$ are 
respectively a real Lie algebra and a family of Lie subalgebras.  For 
$C^\ast(G;\mathbb{R})$ we use the so-called \emph{standard resolution} of 
Chevalley-Eilenberg. It is only in this case that we will need to consider non 
trivial coefficients. 
\end{enumerate}
For some of these theories one can find in the literature other relative 
cohomology theories, and the one here presented coincides with these except for one  
important case: Lie algebra cohomology. Let us review briefly this.

\medskip

\noindent{\bf Relative cohomology for discrete groups:} This has been defined by  
Bieri-Eckmann in \cite{MR509165}. Their construction defines the relative 
cohomology $\HH^\ast(G,\{A_i\})$  as the absolute  cohomology of the group $G$ 
with coefficients in a specific non-trivial $G$-module. As they explain in 
\cite[p. 282]{MR509165}, their construction is isomorphic to ours, up to a sign 
in the long exact sequence of the pair $(G;\{A_i\}_{i \in I})$. Fortunately for 
us this gives in our case a reformulation of their geometric interpretation of 
relative group cohomology  without sign problems:

\begin{theorem}\cite[Thm 1.3]{MR509165}\label{thm:comgpsdeRham}
 Let $(X,Y)$ be an Eilenberg-MacLane pair $K(G,\{A_i\};1)$. Then the relative 
cohomology sequences of $X$ modulo $Y$ and of $G$ modulo $\{A_i\}_{i \in I}$ are 
isomorphic. More precisely one has a commuting ladder with vertical 
isomorphisms:
\[
\xymatrix{
\cdots \ar[r] & \HH^n(G,\{A_i\}) \ar[r] \ar[d] & \HH_{dR}^n(G) \ar[r] \ar[d] & \Pi_i \HH^{n}(A_i) \ar[r] \ar[d]  & \HH^{n+1}(G,\{A_i\}) \ar[r] \ar[d] &  \cdots \\
\cdots \ar[r] & \HH^n(X,Y) \ar[r] & \HH^n(X) \ar[r] & \HH^{n}(Y) \ar[r] & \HH^{n+1}(X,Y) \ar[r] &  \cdots
}
\]
Where the cohomology in the bottom is the usual long exact sequence in singular cohomology.
\end{theorem}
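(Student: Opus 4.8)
The plan is to realize both rows as the long exact sequences of mapping cones and then to transport the classical Eilenberg--MacLane comparison of resolutions into a map of cones, finishing with the five lemma. Fix a CW model in which $X$ is a $K(G,1)$ and $Y=\bigsqcup_{i\in I}Y_i$ is a subcomplex with each $Y_i$ a $K(A_i,1)$ and $Y_i\hookrightarrow X$ inducing $A_i\hookrightarrow G$ on fundamental groups. Because a singular simplex in $Y$ has connected image, $C^\ast(Y;\R)=\prod_{i\in I}C^\ast(Y_i;\R)$ and the restriction $p\colon C^\ast(X;\R)\to C^\ast(Y;\R)$ is a degreewise surjection; working over the field $\R$ it is degreewise split, so the mapping cone $\operatorname{Cone}(p)$ is quasi-isomorphic to $\ker(p)=C^\ast(X,Y;\R)$ via $c\mapsto(0,c)$, and its long exact sequence is the usual one of the pair. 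Thus the bottom row is the cone sequence of $p$, while the top row is, by Definition~\ref{def relhomology}, the cone sequence of the bar restriction $g\colon C^\ast_{bar}(G;\R)\to\prod_{i}C^\ast_{bar}(A_i;\R)$.

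Next I would invoke the Eilenberg--MacLane theorem at the cochain level. Since $X$ is aspherical, the cellular chains $C_\ast(\widetilde X)$ of the universal cover form a free $\Z G$-resolution of $\Z$, and likewise $C_\ast(\widetilde{Y_i})$ is a free $\Z A_i$-resolution, the bar complexes being the other standard free resolutions. The fundamental theorem of homological algebra then produces a comparison quasi-isomorphism $s\colon C^\ast_{cell}(X;\R)\to C^\ast_{bar}(G;\R)$, unique up to homotopy, and similarly $r_i\colon C^\ast_{cell}(Y_i;\R)\to C^\ast_{bar}(A_i;\R)$ for each $i$. Because $X$ and the $Y_i$ are aspherical, the inclusion $Y_i\hookrightarrow X$ is the homotopically unique map realizing $A_i\hookrightarrow G$, so an equivariant lift $\widetilde{Y_i}\hookrightarrow\widetilde X$ over $A_i\hookrightarrow G$ makes the square relating $s$, $\prod_i r_i$ and the two restriction maps $p$ and $g$ commute up to homotopy.

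I would then assemble these data. Taking the product over $i$ of the $r_i$ and using homotopy-commutativity of the resulting square, the functoriality of the cone in the homotopy category (point~(3) of the list opening Section~\ref{sec:relcoho}) produces an induced chain map $\operatorname{Cone}(p)\to\operatorname{Cone}(g)$, hence a commuting ladder between the two long exact sequences whose vertical arrows on $\HH^n(X)\to\HH^n(G)$ and on $\prod_i\HH^n(Y_i)\to\prod_i\HH^n(A_i)$ are exactly the Eilenberg--MacLane isomorphisms. As these are isomorphisms, the five lemma forces the middle vertical arrow on the cones to be an isomorphism as well; inverting the ladder gives the vertical isomorphisms in the direction drawn in the statement. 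Compatibility of the connecting homomorphisms is then automatic, since the map of cones is a genuine chain map and the connecting map $\delta$ was identified with $\HH^\ast$ of the restriction on both sides, matching the topological boundary map.

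The main obstacle is coherence across the whole family $\{A_i\}_{i\in I}$: the comparison maps $s$ and $r_i$ are canonical only up to homotopy, so one must choose the equivariant lifts $\widetilde{Y_i}\hookrightarrow\widetilde X$ simultaneously and check that the individual homotopy-commutative squares glue into a single homotopy-commutative square after passing to the product over $i$, which is exactly what the cone functoriality consumes. The one genuinely delicate point of bookkeeping is the sign normalization flagged before the statement: here it causes no trouble precisely because our top sequence is built from the cone with the conventions fixed in Section~\ref{sec:relcoho}, for which $\delta$ equals the restriction-induced map, so the ladder matches the singular long exact sequence on the nose rather than up to sign.
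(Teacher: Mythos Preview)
The paper does not prove this theorem; it is quoted as \cite[Thm~1.3]{MR509165} and the surrounding text only remarks that Bieri--Eckmann's relative groups agree with the cone-based ones introduced here (up to the sign discussed on \cite[p.~282]{MR509165}). So there is no ``paper's proof'' to compare against beyond that pointer.

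Your argument is a correct self-contained proof, and it is well adapted to the paper's framework: you realize both long exact sequences as cone sequences, produce a homotopy-commutative square of restriction maps via the comparison theorem for projective resolutions, invoke the homotopy-functoriality of the cone (exactly point~(3) of the paper's list), and finish with the five lemma. The two points you flag as delicate are genuinely the ones that need care, and you handle them: the product of the individual $A_i$-equivariant homotopies is a homotopy for the product square because products of chain complexes over $\R$ preserve homotopies, so the coherence across the family $\{A_i\}$ goes through; and the sign issue is indeed absorbed by the paper's cone conventions, under which the connecting map equals $\HH^\ast$ of the restriction. One small cosmetic point: you pass silently between cellular and singular cochains on $(X,Y)$---this is harmless since the comparison is a natural quasi-isomorphism of pairs, but it is worth one sentence if the proof is to be read on its own.
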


We will be particularly interested in the case where $X =M$ is a 
manifold whose interior is  hyperbolic of finite volume, and $Y =\partial M$ is its boundary, in which case 
$Y$ is a finite disjoint union of tori, i.e. $K(\mathbb{Z}^2,1)$'s.

\medskip

\noindent{\bf De Rham cohomology:} Given a manifold $M$ and a smooth
submanifold $A$, a usual way to define relative cohomology groups 
$\HH_{dR}^\ast(M,A)$ is to consider the kernel $\Omega_{dR}^\ast(M,A)$ of the 
canonical map between de Rham complexes induced by the inclusion: 
$\Omega_{dR}^\ast(M) \rightarrow \Omega_{dR}^\ast(A)$.  This gives rise to a 
level-wise split short exact sequence of complexes:
\[
\xymatrix{
0 \ar[r] & \Omega_{dR}^\ast(M,A) \ar[r] & \Omega_{dR}^\ast(M) \ar[r] & 
\Omega_{dR}^\ast(A) \ar[r] & 0
}
\]
where the surjectivity uses the tubular neighborhood to extend any differential 
form on $A$ to a form on $M$. As these are chain complexes of 
$\mathbb{R}$-vector spaces, the usual argument based on the snake lemma gives 
rise to a long exact sequence:
\[
\xymatrix{
\cdots \ar[r] & \HH_{dR}^n(M,A) \ar[r] & \HH_{dR}^n(M) \ar[r] & \HH_{dR}^{n}(A) 
\ar[r] & \HH^{n+1}_{dR}(M,A) \ar[r] &  \cdots
}
\]

The relative de Rham cohomology can also be defined using the cone construction, cf.~\cite{BottTu}.
There is a canonical map 
\[
\Omega_{dR}^\ast(M,A) \rightarrow \operatorname{Cone}(\Omega_{dR}^\ast(M) 
\rightarrow \Omega_{dR}^\ast(A)),
\]
it maps a differential form $\omega$ of degree $n$  that is zero on $A$ to 
$(0,\omega) \in \Omega_{dR}^{n-1}(A) \oplus \Omega_{dR}^n(M)= 
\Omega_{dR}(M,\{A\})$. It is immediate to check that this  is a map of chain 
complexes, compatible with the restriction map and the connecting homomorphisms, 
and hence gives a commutative ladder;
\[
\xymatrix{
\cdots \ar[r] & \HH_{dR}^n(M,A) \ar[r] \ar[d] & \HH_{dR}^n(M) \ar[r] \ar@{=}[d] & 
\HH_{dR}^{n}(A) \ar[r] \ar@{=}[d] & \HH^{n+1}_{dR}(M,A) \ar[r] \ar[d]&  \cdots \\
\cdots \ar[r] & \HH_{dR}^n(M,\{A\}) \ar[r] & \HH_{dR}^n(M) \ar[r] & \HH_{dR}^{n}(A) 
\ar[r] & \HH^{n+1}_{dR}(M,\{A\}) \ar[r] &  \cdots
}
\]
where in the bottom we denote by $\HH^\ast_{dR}(M,\{A\})$ "our" relative 
cohomology groups. Applying the five lemma we conclude  that this canonical map 
is a quasi-isomorphism.

\medskip

\noindent{\bf Lie algebra cohomology:} This is an important case where our 
relative groups do not coincide with the usual ones. 
Given a Lie algebra $\mathfrak{g}$ and a subalgebra $\mathfrak{h}$, 
Chevalley-Eilenberg~\cite{MR1269324}  define the relative Lie algebra cohomology 
via the (now known as) relative Chevalley-Eilenberg complex:
\[
\textstyle{
\HH^\ast(\mathfrak{g},\mathfrak{h})= 
\HH^\ast(\operatorname{Hom}_{\mathfrak{h}-mod}( {\bigwedge}^\ast\mathfrak{g}
/\mathfrak{h},\mathbb{R})).}
\]

If $\mathfrak{g}$ and $\mathfrak{h}$ are Lie algebras of a Lie group $G$ and a 
closed subgroup $H$, the relationship between the cohomologies 
$\HH^\ast(\mathfrak{g})$, $\HH^\ast(\mathfrak{h})$ and 
$\HH^\ast(\mathfrak{g},\mathfrak{h})$ parallels the relationship between the 
cohomologies of the spaces in the fibration sequence:
\[
H \rightarrow G \rightarrow G/H.
\]

In particular there is a Hochshild-Serre spectral sequence relating these 
cohomologies, in contrast with the long exact sequence in our case. 

To distinguish our definition and to avoid an unnecessary clash with standard 
notations, even in case we have a family of subobjects consisting of a single 
element, we will denote our relative version as 
$\HH^\ast(\mathfrak{g},\{\mathfrak{h}\})$ and keep the usual notation 
$\HH^\ast(\mathfrak{g},\mathfrak{h})$ for the cohomology of the complex 
$\operatorname{Hom}_{\mathfrak{h}-mod}(\bigwedge^\ast\mathfrak{g}/\mathfrak{h},\mathbb{R})$.

\subsection{The case of continuous-bounded cohomology:}
Continuous-bounded cohomology produces cohomology groups that are  naturally 
Banach spaces, and this is an important feature of the theory.  As we will have 
to consider non-countable families of subgroups, there is no hope that we could 
give some metric to our relative cohomology groups $\HH^\ast_{cb}(G,\{A_i\})$ in 
any way compatible with the metrics on the absolute  cohomology groups, for the 
space $\prod_{i \in I}\HH^\ast_{cb}(A_i)$ will not usually be metrizable.    
However we are only interested in these relative cohomology groups   as tools 
interpolating between the cohomology of a group and the cohomologies of 
subgroups in a given family and  we will not enter the subtler point of the 
metric.

\begin{notation}
As a general rule we will write cohomology with coefficients separated by 
semicolons, eg.\ $\HH^3(\mathrm{SL}_n(\C);\C^n)$, unless we are dealing with 
 the ground field $\mathbb{R}$ as coefficients, in which case we will usually omit them, and write 
$\HH^3_c(\mathrm{SL}_n(\C))$ instead of $\HH^3_c(\mathrm{SL}_n(\C);\R)$. For 
cochain complexes we will however keep the reference to the coefficients in all cases.
\end{notation}

\section{Relative characteristic and variation maps}\label{sec:relcharvar}

In this section we explain how one can``relativize" Fuks construction~\cite[Chap.~3, Par.~1]{MR874337} of a characteristic class of a foliation, and more generally of a manifold with $\mathfrak{g}$-structure, and the way he handles their variation.

\subsection{Relative characteristic classes}

Given a smooth principal $G$-bundle  $E$ and a flat connection $\nabla\in \Omega^1_{dR}(E,\fg)$ on $E$ with values in a Lie algebra $\mathfrak{g}$, the absolute characteristic class map is given on the cochain level by:
\[
\begin{array}{rcl}
\operatorname{Char}_\nabla : C^\ast(\mathfrak{g};\R) & \longrightarrow & \Omega^\ast_{dR}(E) \\
\alpha & \longmapsto & (X_1,\cdots,X_n) \leadsto \alpha(\nabla X_1, \dots, \nabla X_n).
\end{array}
\]
This construction is contravariantly  functorial in both variables $\mathfrak{g}$ and $E$; flatness of $\nabla$ implies  this is in fact a chain map, i.e. it commutes with the differentials.

Fix a family of Lie subalgebras $\{\fb\}$ of $\mathfrak{g}$ and a family of 
smooth closed submanifolds $\{A\} \subset E$, for instance the family of 
connected components of the boundary of $E$. Assume that the flat connection 
$\nabla$ on $A$ restricts to a flat connection with values in $\fb^A$, an 
element in the chosen family of Lie subalgebras. Then by functoriality of the 
map $\operatorname{Char}_\nabla$ we have for each $A \subset E$  a commutative 
diagram:
\[
\xymatrix{
 C^\ast(\mathfrak{g};\R) \ar[rr]^{\operatorname{Char}_\nabla} \ar[d] & &  \Omega^\ast_{dR}(E) \ar[d] \\
  C^\ast(\mathfrak{b}_A;\R) \ar[rr]^{\operatorname{Char}_{\nabla\vert_A}} & & \Omega^\ast_{dR}(A)
}
\]
By functoriality of the cone construction we get a  \emph{relative} characteristic classe cochain map:
\[
\xymatrix{
 C^\ast(\mathfrak{g},\{\fb\}) \ar[rr]^{\operatorname{Char}_{\nabla,\{\nabla \vert_A\}}}  & &  \Omega^\ast_{dR}(E,\{A\}).
 }
\]

\subsection{Variation of characteristic classes}\label{subsec varchaclass}
Let us again briefly recall Fuks framework in the absolute case~\cite[Chap. 3, pp. 241-246]{MR874337}. We consider a $1$-parameter family of flat connections $\nabla_t$ on a manifold $E$ with values in a \emph{fixed} Lie algebra $\mathfrak{g}$. Given a Lie algebra cohomology class $[\omega] \in \HH^\ast(\mathfrak{g};\mathbb{R})$, we  want to understand the variation of the cohomology class $\operatorname{Char}_{\nabla_t}(\omega) \in \HH^\ast_{dR}(E)$ as $t$ varies.

Assume that the connection $\nabla_t$ depends differentiably on $t$, then its 
derivative in $t=0$, denoted by $\dot{\nabla}_{0}$, is again a 
connection with values in $\mathfrak{g}$. 
The characteristic class $\operatorname{Char}_{\nabla_t}(\alpha)$ depends then 
also differentiably on the parameter $t$ and, assuming $\alpha$ is of degree $n$,   
its derivative at $t=0$ is directly computed to be the de Rham cohomology class 
of the form obtained by the Leibniz derivative rule:
\[
\operatorname{Var}_{\nabla_t}(\alpha)\colon (X_1,\dots,X_n) \mapsto \sum_{i=1}^n 
\alpha(\nabla_0X_1,\dots,\nabla_0X_{i-1}, 
\dot{\nabla}_0X_i,\nabla_0X_{i+1},\dots \nabla_0X_n).
\] 
From this we get a cochain map:
\[
\begin{array}{rcl}
\operatorname{Var}_{\nabla_t} \colon C^\ast(\mathfrak{g};\R) & \longrightarrow & \Omega^\ast_{dR}(E) \\
\alpha & \longmapsto & \operatorname{Var}_{\nabla_t}(\alpha).
\end{array} 
\]

The family of connections $\nabla_t$ can also be seen as a single connection on 
$E$ but with values in the  algebra of currents 
$$\cur{\mathfrak{g}}=\mathcal{C}^{1}( 
\mathbb{R},\mathfrak{g}).
$$
The associated characteristic class map
$$\operatorname{Char}_{\nabla_t}: \HH^\ast(\cur{\mathfrak{g}})\rightarrow 
\HH^\ast_{dR}(E) $$ 
factors the variation map in a very nice way. Consider the 
following two  cochain maps:
\begin{enumerate}
\item The map $\operatorname{var}$:
\[
\begin{array}{rcl}
\operatorname{var} \colon C^n(\mathfrak{g};\R) & \longrightarrow & C^{n-1}(\mathfrak{g};\mathfrak{g}^\vee)\\
\alpha & \longmapsto & (g_1,\dots,g_{n-1}) \leadsto \big( h \mapsto \alpha(g_1,\dots,g_{n-1},h)\big)
\end{array} 
\]
where by $\mathfrak{g}^\vee$ denotes the dual vector space $\operatorname{Hom}(\mathfrak{g},\mathbb{R})$, this is 
canonically a left $\mathfrak{g}$-module  by setting $(g\phi)(h)= -\phi([g,h])$. 
\item Fuks map~\cite[Chap. 3 p. 244]{MR874337} is a cochain map, in fact a split monomorphism: 
\[
\begin{array}{rcl}
 \Fu\colon  C^{n-1}(\mathfrak{g};\mathfrak{g}^\vee) & \longrightarrow &   C^n(\widetilde{\mathfrak{g}}) 
 \end{array} 
 \]
that sends a cochain $\alpha \in C^{n-1}(\mathfrak{g};\mathfrak{g}^\vee)$ to the cochain
 \[
 (\phi_1,\dots,\phi_n) \mapsto \sum_{i=1}^n (-1)^{n-i} \Big[\alpha(\phi_1(0),\dots,\phi_{i-1}(0), \widehat{\phi_i(0)},\phi_{i+1}(0),\dots,\phi_n(0))\Big]\big(\dot{\phi_i}(0)\big)\, .
\]
\end{enumerate}
By direct computation one shows that the following diagram of cochain maps commutes:
\[
\xymatrix{
C^n(\mathfrak{g};\R) \ar[r]^{\operatorname{var}} \ar@/_1.0pc/[drr]_{\operatorname{Var}_{\nabla_t}} 
& C^{n-1}(\mathfrak{g};\mathfrak{g}^\vee) \ar[r]^{\Fu} &  C^{n}(\cur{\mathfrak{g}};\R) \ar[d]^{\operatorname{Char}_{\nabla_t}} \\
& &  \Omega_{dR}^n(E) 
}
\]

Let us now relativize the construction above. We have fixed a relative cocycle 
$(\omega,\{\beta\}) \in C^n(\fg,\{\fb\})$, a $1$-parameter family of connections 
$\nabla_t$ on a manifold $E$, and a family of closed submanifolds $\{A\}$ in $E$. 
Assume that for each value of $t$ the restriction of $\nabla_t$ to $A$ takes 
values in the Lie subalgebra $\fb^A_t \in \{\fb \}$. Then for each value of the 
parameter $t$ we have as data a relative de Rham cocycle with absolute part:
\[
(X_1,\dots X_n) \longmapsto \omega(\nabla_tX_1,\dots,\nabla_t X_n),
\]
and relative part given on each submanifold $A$ by:
\[
(Y_1,\dots,Y_{n-1}) \longmapsto \beta_t^A(\nabla_tY_1,\dots,\nabla_tY_{n-1}).
\]
The instant variation of this class is given by computing the usual limit. For the absolute part $\omega$ we get the same result as in the non-relative case:
\[
\operatorname{Var}_{\nabla_t}(\omega)
\]
For the relative part we have to compute the limit as $t\rightarrow 0$ of:
\begin{equation}
 \label{eqn:nabla}
\Delta(\beta,t)= \frac{1}{t}\big( \beta_t^A(\nabla_tY_1,\dots,\nabla_tY_{n-1}) - \beta_0^A(\nabla_0Y_1,\dots,\nabla_0Y_{n-1}) \big) \quad (\ast)
 \end{equation}
Here we are stuck as the usual tricks that lead to a Leibniz type derivation 
formula  in this case do not work: the problem lies in the fact that the class 
$\beta^A_t$ also depends on the time $t$. To overcome this difficulty we will 
impose the following coherence condition on the connection with respect to the 
family of Lie subalgebras $\fb^A_t$:

\begin{definition}\label{def coherencecond}
Assume $\mathfrak{g}$ is the Lie algebra of a connected Lie group $G$. Consider 
on a manifold $E$ with a family of submanifolds $A$ a one parameter family of 
connections $\nabla_t$. Assume that for each $A$, the restriction 
$\nabla_0\vert_A$ lies in the Lie subalgebra $\fb^A$. We say that the 
connection varies \emph{coherently} along the submanifolds $A$ with respect to 
the family $\{\fb\}$ if and only if the following holds:
\begin{enumerate}
\item[] There is a subgroup $H \subset G$ such that  for each subspace $A$ there 
exists a differentiable $1$-parameter family of elements of $H$, $h_t$ with $h_0 = \operatorname{Id}$,  
such that for each value of the parameter $t$ the connection 
$\widetilde{\nabla}^A_t = \operatorname{Ad}_{h_t}\nabla_t\vert_A$ takes values in the Lie 
subalgebra at the origin $\fb^A$.
\end{enumerate}
\end{definition}

This condition will force us to restrict our treatment of the variation of a relative characteristic class in two ways: 
\begin{enumerate}
\item Firstly we will only consider classes whose global part is an $H$-invariant cocycle, where $H$ is the group defined above.
\item  Secondly, given a  connection that varies coherently along the submanifolds $A$ with respect to the family $\{\fb\}$, we will ask for the relative part of the cocycle to satisfy the coherence condition:
\[
\forall (Y_1,\dots, Y_{n-1}) \quad  \beta_t^A(\nabla_tY_1,\dots,\nabla_tY_{n-1}) = \beta_0^A(\widetilde{\nabla}_t^AY_1,\dots,\widetilde{\nabla}_t^AY_{n-1}) .
\]
\end{enumerate}

\begin{definition}\label{def coherenceconnection}
 We say that the characteristic class \emph{varies coherently} with the connection if the previous two conditions are satisfied.
\end{definition}

Under this assumption we can pursue the computation in \eqref{eqn:nabla} above:
\begin{eqnarray*}
\lim_{t \to 0} \Delta(\beta,t) & = & \lim_{t \to 0}\frac{1}{t}\big( \beta_t^A(\nabla_tY_1,\dots,\nabla_tY_{n-1}) - \beta_0^A(\nabla_0Y_1,\dots,\nabla_0Y_{n-1}) \\
 & = &\lim_{t \to 0} \frac{1}{t} \big( \beta_0^A(\widetilde{\nabla}_t^AY_1,\dots,\widetilde{\nabla}_t^AY_{n-1})- \beta_0^A(\widetilde{\nabla}_0^AY_1,\dots,\widetilde{\nabla}_0^AY_{n-1}) \big)
 \\  
 & =& \sum_{j=1}^{n-1} \beta_0^A(\widetilde{\nabla}_0^AY_1,\dots,\widetilde{\nabla}_0^AY_{j-1},\dot{\widetilde{\nabla}^A_0}Y_j,\widetilde{\nabla}_0^AY_{j+1}, \dots\widetilde{\nabla}_t^AY_{n-1}) \\
 &=& \operatorname{Var}_{\widetilde{\nabla}_t}(\beta^A_0)
\end{eqnarray*}
Observe that, since $\omega$ is $H$-invariant, for any vector fields $(X_1,\dots,X_n)$ on $E$ we have
\[
\omega(\nabla_t X_1,\dots,\nabla_t X_n) = \omega(\widetilde{\nabla}_tX_1,\dots,\widetilde{\nabla}_tX_n), 
\]
and in particular as differential forms
\[
d\operatorname{Var}_{\widetilde{\nabla}_t}(\beta^A_t) =j^*_A (\operatorname{Var}_{\nabla_t}(\omega)) = j^*_A (\operatorname{Var}_{\widetilde{\nabla}_t}(\omega)),
\]
where $j_A: A \hookrightarrow E$ is the  inclusion. Hence the data \[(\operatorname{Var}_{\nabla_t}(\omega),\{ \operatorname{Var}_{\widetilde{\nabla}_t}(\beta^A)\}) = \operatorname{Var}_{\nabla_t,\{\widetilde{\nabla}\}}(\omega,\{\beta\})\] is indeed a relative differential form on $(E,\{A\})$.

We will now relativize the maps $\operatorname{var}$ and $F$ involved in Fuks factorization of the map $\operatorname{Var}_{\nabla_t}$.

\begin{lemma}\label{lem cochareHmodules}
Let $G$ be a connected Lie group, $\mathfrak{g}$ its Lie algebra and $H \subset 
G$ a subgroup. Then the cochain complexes $C^\ast(\fg;\R)$, 
$C^\ast(\fg;\fg^\vee)$ and $C^\ast(\widetilde{\fg};\R)$ are cochain complexes of 
$H$-modules, where the action of $H$ is induced by its adjoint action on $\fg$. 
Moreover the maps $\operatorname{var}$ and $\Fu$ are compatible with the action 
of $H$.
\end{lemma}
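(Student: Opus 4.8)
The plan is to regard all three complexes as functorial constructions attached to the Lie algebra $\fg$ and its canonical modules, and to exploit that for every $h\in H$ the map $\operatorname{Ad}_h\colon\fg\to\fg$ is a Lie algebra automorphism (being the differential at the identity of conjugation by $h$, a group automorphism of $G$). Recall that any Lie algebra homomorphism $\varphi\colon\fg'\to\fg$ induces by pullback $\alpha\mapsto\alpha\circ\bigwedge^\ast\varphi$ a cochain map $C^\ast(\fg;\R)\to C^\ast(\fg';\R)$, and that this assignment is compatible with the Chevalley--Eilenberg differential because the latter is natural with respect to Lie algebra homomorphisms. Applying this to the automorphisms $\operatorname{Ad}_h$ and setting $h\mapsto(\operatorname{Ad}_{h^{-1}})^\ast$ (the inverse making the contravariant pullback into a genuine left action) gives an action of $H$ on $C^\ast(\fg;\R)$ by cochain maps, which is exactly the assertion that it is a cochain complex of $H$-modules. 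The same argument applies verbatim to $C^\ast(\widetilde{\fg};\R)$: the bracket on $\widetilde{\fg}=\mathcal{C}^1(\R,\fg)$ is defined pointwise, so $\operatorname{Ad}_h$ acting pointwise is again a Lie algebra automorphism of $\widetilde{\fg}$, and pullback yields the $H$-action.

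For $C^\ast(\fg;\fg^\vee)$ one must additionally track the coefficients. Here $H$ acts on $\fg^\vee$ by the contragredient of the adjoint representation, $(h\cdot\phi)=\phi\circ\operatorname{Ad}_{h^{-1}}$, and on an $n$-cochain $\alpha$ by $(h\cdot\alpha)(g_1,\dots,g_n)=h\cdot\big(\alpha(\operatorname{Ad}_{h^{-1}}g_1,\dots,\operatorname{Ad}_{h^{-1}}g_n)\big)$. The point to verify is that this action commutes with the Chevalley--Eilenberg differential of $C^\ast(\fg;\fg^\vee)$; by the standard form of that differential this reduces to the $(\fg,H)$-compatibility condition $h\cdot(X\cdot\phi)=(\operatorname{Ad}_hX)\cdot(h\cdot\phi)$ for the coadjoint $\fg$-action $(X\phi)(Y)=-\phi([X,Y])$ on $\fg^\vee$. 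A short computation confirms it: since $X\cdot\phi=-\phi\circ\operatorname{ad}_X$ and $h\cdot\phi=\phi\circ\operatorname{Ad}_{h^{-1}}$, the identity $\operatorname{ad}_{\operatorname{Ad}_hX}=\operatorname{Ad}_h\,\operatorname{ad}_X\,\operatorname{Ad}_{h^{-1}}$ gives $(\operatorname{Ad}_hX)\cdot(h\cdot\phi)=-\phi\circ\operatorname{ad}_X\circ\operatorname{Ad}_{h^{-1}}=h\cdot(X\cdot\phi)$. Thus $\fg^\vee$ is a compatible $(\fg,H)$-module and $H$ acts by cochain maps on $C^\ast(\fg;\fg^\vee)$.

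It remains to check that $\operatorname{var}$ and $\Fu$ intertwine these actions, which is a direct substitution into their defining formulas. For $\operatorname{var}$, the action of $h$ applies $\operatorname{Ad}_{h^{-1}}$ to each of the $n$ arguments and the contragredient action to the output covector; since $\operatorname{var}$ merely curries the last argument into a covector, one reads off $\operatorname{var}(h\cdot\alpha)=h\cdot\operatorname{var}(\alpha)$ immediately. For $\Fu$, the only subtlety is that its formula pairs the covector $\alpha(\dots)\in\fg^\vee$ against the tangent vector $\dot\phi_i(0)\in\fg$; this pairing is invariant under simultaneously applying the contragredient action to the covector and $\operatorname{Ad}_h$ to the vector, while the evaluations $\phi_j(0)$ and the derivatives $\dot\phi_i(0)$ both transform by $\operatorname{Ad}_h$ under the pointwise action on $\widetilde{\fg}$, so the two transformations cancel term by term and $\Fu(h\cdot\alpha)=h\cdot\Fu(\alpha)$. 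The main point requiring care is the coefficient case of the second paragraph: establishing that the contragredient $H$-action on $\fg^\vee$ is genuinely compatible with its coadjoint $\fg$-module structure, since this is what guarantees both that $C^\ast(\fg;\fg^\vee)$ is a complex of $H$-modules and that the equivariance verifications for $\operatorname{var}$ and $\Fu$ close up with consistent inverse and sign conventions.
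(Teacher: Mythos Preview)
Your proof is correct and follows essentially the same approach as the paper, which simply says: ``This is an immediate consequence of the fact that the above chain complexes are functorial in the variable $\mathfrak{g}$ and the adjoint action is by automorphisms of Lie algebras.'' You have unpacked precisely this, supplying the explicit compatibility check for the coadjoint module $\fg^\vee$ and the termwise verification that $\operatorname{var}$ and $\Fu$ intertwine the actions; the paper leaves all of this implicit in the word ``functorial.''
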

\begin{proof}
This is an immediate consequence of the fact that the above chain complexes are functorial in the variable $\mathfrak{g}$ and the adjoint action is by automorphisms of Lie algebras.
\end{proof}

\begin{notation}\label{not:relcochcoplx}
Denote by $C^\ast_H(\fg;\R)$,   $C^\ast_H(\fg;\fg^\vee)$ and $C^\ast_H(\widetilde{\fg})$ the subspace of fixed points under the action of $H$ of the vector spaces $C^\ast(\fg)$, $C^\ast(\fg;\fg^\vee)$ and $C^\ast(\widetilde{\fg};\mathbb{R})$ respectively.
\end{notation}

\begin{definition}\label{def relcohcoxes}
Denote by
\[
 C^\ast_H(\fg;\{\fb\}) = \operatorname{Cone}\big(C^\ast_H(\fg;\R) \rightarrow \prod C^\ast(\fb;\R)\big) 
 \]
 \[
 C^{\ast}_H(\fg,\{\fb\};\fg^\vee,\{\fb^\vee\})= \operatorname{Cone}\big(C^\ast_H(\fg;\fg^\vee) \rightarrow \prod C^\ast(\fb;\fb^\vee)\big)
 \]
 the cones taken along the maps induced by the inclusions $\fb \rightarrow \fg$. 
\end{definition}
Notice that in the above definition we do not ask a priori the chains on the  Lie algebras $\fb$  to be invariant of any sort.

\begin{proposition}\label{prop:relvarandemb}
Via the cone construction the chain maps $\operatorname{var}$ and $\Fu$  induce relative chain maps:
\[
\operatorname{var}\colon C^\ast_H(\fg,\{\fb\}) \longrightarrow C^{\ast-1}_H(\fg,\{\fb\};\fg^\vee,\{\fb^\vee\})
\]
and
\[
\Fu\colon C^{\ast-1}_H(\fg,\{\fb\};\fg^\vee,\{\fb^\vee\}) \longrightarrow C^\ast_H(\widetilde{\fg},\{\widetilde{\fb}\})
\]
\end{proposition}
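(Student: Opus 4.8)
The plan is to produce both relative maps formally, as cone maps induced by commutative squares, so that the real content reduces to (i) restricting to $H$-invariants and (ii) checking the naturality of $\operatorname{var}$ and $\Fu$ under the inclusions $\fb\hookrightarrow\fg$. First, by Lemma~\ref{lem cochareHmodules} the maps $\operatorname{var}$ and $\Fu$ are $H$-equivariant, so they pass to the subcomplexes of $H$-fixed points and yield $\operatorname{var}\colon C^\ast_H(\fg;\R)\to C^{\ast-1}_H(\fg;\fg^\vee)$ and $\Fu\colon C^{\ast-1}_H(\fg;\fg^\vee)\to C^\ast_H(\cur{\fg})$. For each subalgebra $\fb$ of the family the same universal formulas define $\operatorname{var}\colon C^\ast(\fb;\R)\to C^{\ast-1}(\fb;\fb^\vee)$ and $\Fu\colon C^{\ast-1}(\fb;\fb^\vee)\to C^\ast(\cur{\fb})$, with no invariance imposed, in accordance with Definition~\ref{def relcohcoxes}.

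Next I would verify naturality with respect to each inclusion $\fb\hookrightarrow\fg$. On coefficients this inclusion induces the restriction $\fg^\vee\to\fb^\vee$, $\phi\mapsto\phi|_\fb$; because $\fb$ is a subalgebra one has $[\fb,\fb]\subseteq\fb$, and a one-line check shows this restriction is a morphism of $\fb$-modules compatible with $\fb\hookrightarrow\fg$ in the sense of Definition~\ref{def relhomology} — this compatibility is exactly what makes $C^\ast_H(\fg,\{\fb\};\fg^\vee,\{\fb^\vee\})$ well defined in the first place. Unwinding the definition of $\operatorname{var}$ then gives the commuting square
\[
\xymatrix{
C^\ast_H(\fg;\R) \ar[r] \ar[d]_{\operatorname{var}} & \prod C^\ast(\fb;\R) \ar[d]^{\prod \operatorname{var}} \\
C^{\ast-1}_H(\fg;\fg^\vee) \ar[r] & \prod C^{\ast-1}(\fb;\fb^\vee)
}
\]
whose horizontal arrows are the canonical restriction maps: restricting a cochain $\alpha$ on $\fg$ and then applying $\operatorname{var}_\fb$ evaluates $\alpha$ only on arguments lying in $\fb$, which is exactly $\operatorname{var}_\fg(\alpha)$ followed by restriction of both the arguments and the $\fg^\vee$-coefficient to $\fb$. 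The analogous square for $\Fu$ commutes as well: the inclusion induces $\cur{\fb}\hookrightarrow\cur{\fg}$ on currents, and in Fuks' formula each $\dot\phi_i(0)$ lies in $\fb$ whenever the $\phi_i$ are paths in $\fb$, so evaluating $\alpha(\cdots)\in\fg^\vee$ there agrees with first restricting $\alpha(\cdots)$ to $\fb^\vee$.

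Finally I would invoke the functoriality of the cone construction from Section~\ref{sec:relcoho}: applied to these two squares it produces maps between the cones, which are precisely the complexes of Definition~\ref{def relcohcoxes}, and a degree count confirms that the first lowers degree by one and the second raises it by one. The one point that genuinely needs care — and the main, if mild, obstacle — is that the functoriality of the cone was stated for degree-\emph{preserving} chain maps, whereas $\operatorname{var}$ and $\Fu$ shift degree by $-1$ and $+1$ respectively. I would therefore check compatibility with the cone differentials by hand, tracking the interaction of the $-d_L$ entry of $d_{\operatorname{Cone}}$ with the sign convention under which $\operatorname{var}$ and $\Fu$ commute with the Chevalley--Eilenberg differentials; inserting the resulting sign into the induced cone map makes it a genuine map of complexes, and the proposition follows.
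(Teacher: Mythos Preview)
Your proposal is correct and follows essentially the same route as the paper: both reduce the statement to the commutativity of the two naturality squares for $\operatorname{var}$ and $\Fu$ with respect to each inclusion $\fb\hookrightarrow\fg$, and then invoke the functoriality of the cone construction. Your additional remarks on passing to $H$-invariants via Lemma~\ref{lem cochareHmodules} and on handling the degree shift in the cone functoriality are reasonable elaborations of points the paper leaves implicit.
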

\begin{proof}
This follows from the functoriality of the cone construction and the commutativity, for any Lie subalgebra $\fb \subset \fg$,  of the following two squares:
\[
\xymatrix{
C^\ast_H(\fg;\R) \ar[r] \ar[d] & C^{\ast-1}_H(\fg;\fg^\vee) \ar[d] \\
C^\ast(\fb;\R) \ar[r]  & C^{\ast-1}(\fb;\fb^\vee) 
}
\]
and
\[
\xymatrix{
C^{\ast-1}_H(\fg;\fg^\vee) \ar[r] \ar[d] & C^{\ast}_H(\widetilde{\fg};\R) \ar[d] \\
C^{\ast-1}(\fb;\fb^\vee) \ar[r]  & C^{\ast}(\widetilde{\fb};\R) 
}
\]
\end{proof}

\begin{proposition}\label{prop:relcaahrnabla}
With the notations of Definition~\ref{def coherencecond}, the map $\operatorname{Char}_{\nabla_t}: C_H^\ast(\widetilde{\fg};\R) \rightarrow \Omega_{dR}^\ast(E)$ and $\operatorname{Char}_{\widetilde{\nabla}_t^A}: C_H^\ast(\widetilde{\fb^A};\R) \rightarrow \Omega_{dR}^\ast(A)$ induce a map in relative cohomology
\[
\operatorname{Char}_{\nabla_t,\{\widetilde{\nabla}_t^A\}} : C^\ast_H(\widetilde{\fg},\{\widetilde{\fb}^A\}) \rightarrow \Omega_{dR}^\ast(E,\{A\})
\]
which is compatible with the restrictions and inflation maps, where
\[
C^\ast_H(\widetilde{\fg},\{\widetilde{\fb}^A\})  = \operatorname{Cone}\big( C^\ast_H(\widetilde{\fg};\R) \hookrightarrow C^\ast(\widetilde{\fg};\R) \stackrel{rest.}{\longrightarrow} \prod_A C^\ast(\widetilde{\fb}^A;\R) \big)
\]
\end{proposition}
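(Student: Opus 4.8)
The plan is to construct the relative characteristic map by assembling the two absolute maps $\operatorname{Char}_{\nabla_t}$ and $\operatorname{Char}_{\widetilde{\nabla}_t^A}$ into a single map of cones. Recall that $C^\ast_H(\widetilde{\fg},\{\widetilde{\fb}^A\})$ is, by the displayed formula, the cone of the composite $C^\ast_H(\widetilde{\fg};\R) \hookrightarrow C^\ast(\widetilde{\fg};\R) \xrightarrow{rest.} \prod_A C^\ast(\widetilde{\fb}^A;\R)$, while the target $\Omega^\ast_{dR}(E,\{A\})$ is the cone of the restriction $\Omega^\ast_{dR}(E) \to \prod_A \Omega^\ast_{dR}(A)$. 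So what I need is a commutative square of cochain maps
\[
\xymatrix{
C^\ast_H(\widetilde{\fg};\R) \ar[r] \ar[d]_{\operatorname{Char}_{\nabla_t}} & \prod_A C^\ast(\widetilde{\fb}^A;\R) \ar[d]^{\prod_A \operatorname{Char}_{\widetilde{\nabla}_t^A}} \\
\Omega^\ast_{dR}(E) \ar[r] & \prod_A \Omega^\ast_{dR}(A)
}
\]
and then invoke the functoriality of $\operatorname{Cone}$ recorded in Section~\ref{sec:relcoho} to obtain the desired $\operatorname{Char}_{\nabla_t,\{\widetilde{\nabla}_t^A\}}$.

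First I would verify that each vertical arrow is genuinely a cochain map. For the absolute $\operatorname{Char}_{\nabla_t}$ this is already established in the previous subsection: flatness of $\nabla_t$ forces it to commute with the differentials, and the same argument applied fibrewise to the flat connection $\widetilde{\nabla}_t^A = \operatorname{Ad}_{h_t}\nabla_t\vert_A$ on $A$ shows $\operatorname{Char}_{\widetilde{\nabla}_t^A}$ is a cochain map as well. The only subtlety is that $\widetilde{\nabla}_t^A$ takes values in $\widetilde{\fb}^A$ by the coherence condition of Definition~\ref{def coherencecond}, which is exactly what makes $\operatorname{Char}_{\widetilde{\nabla}_t^A}$ land in $\Omega^\ast_{dR}(A)$ with source $C^\ast(\widetilde{\fb}^A;\R)$.

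The heart of the matter, and the step I expect to be the main obstacle, is commutativity of the square, because the top horizontal map is \emph{not} simply restriction of cochains along $\widetilde{\fb}^A \hookrightarrow \widetilde{\fg}$: it is the restriction followed by the passage from $\nabla_t\vert_A$ to the conjugated connection $\widetilde{\nabla}_t^A$. Chasing a class $\alpha \in C^\ast_H(\widetilde{\fg};\R)$ around the two ways, the upper-right route gives $(Y_1,\dots,Y_n) \mapsto \alpha(\widetilde{\nabla}_t^A Y_1,\dots,\widetilde{\nabla}_t^A Y_n)$ evaluated on $A$, while the lower-left route restricts the form $(X_1,\dots,X_n)\mapsto \alpha(\nabla_t X_1,\dots,\nabla_t X_n)$ to $A$, yielding $\alpha(\nabla_t Y_1,\dots,\nabla_t Y_n)$. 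These agree precisely because $\widetilde{\nabla}_t^A = \operatorname{Ad}_{h_t}\nabla_t\vert_A$ and $\alpha$ is $H$-invariant, so that $\alpha(\operatorname{Ad}_{h_t}(-),\dots) = \alpha(-,\dots)$ on $A$ — this is the identical computation already used to show $\omega(\nabla_t X_i) = \omega(\widetilde{\nabla}_t X_i)$ in the discussion preceding Lemma~\ref{lem cochareHmodules}, and it is exactly where the restriction to $H$-invariant cocycles pays off.

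Once the square commutes, $\operatorname{Cone}$ produces the map $\operatorname{Char}_{\nabla_t,\{\widetilde{\nabla}_t^A\}}$ on the nose, and compatibility with the restriction map $\HH^\ast(G,\{A_i\}) \to \HH^\ast(G)$ and the inflation (connecting) maps is then automatic: it follows from the naturality of the long exact sequence of a cone, stated in the commuting-ladder diagram at the end of the cone construction subsection, applied to the morphism of squares exhibited above. I would close by remarking that no further hypothesis is needed, since the coherence condition has been built into the definition of $\widetilde{\nabla}_t^A$ precisely so that this square commutes.
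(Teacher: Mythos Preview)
Your proposal is correct and follows essentially the same route as the paper: reduce to the commutativity of the square relating $\operatorname{Char}_{\nabla_t}$ and $\operatorname{Char}_{\widetilde{\nabla}_t^A}$ via the restriction maps, then invoke functoriality of the cone. The paper dismisses the verification of commutativity as ``a trivial diagram chasing'', whereas you spell out explicitly that it rests on the $H$-invariance of $\alpha$ together with $\widetilde{\nabla}_t^A=\operatorname{Ad}_{h_t}\nabla_t\vert_A$; this is exactly the intended content.
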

\begin{proof}
By functoriality of the cone construction, it is enough to show that for each $A$ the following diagram where the vertical maps are the restriction maps commutes:
\[
\xymatrix{
 C^\ast_H(\widetilde{\fg};\R) \ar[d] \ar[rr]^-{\operatorname{Char}_{\nabla_t}}& & \Omega_{dR}^\ast(E) \ar[d] \\
 C^\ast(\widetilde{\fb}^A;\R) \ar[rr]_-{\operatorname{Char}_{\widetilde{\nabla}_t^A}} &  & \Omega_{dR}^\ast(A) 
}
\]
which is achieved by a  trivial diagram chasing.
\end{proof}

Summing up the results in this section we have shown that:

\begin{theorem}\label{them relVarmap}
Let $(\omega,\{\beta\}) \in C^\ast_H(\fg,\{\fb\})$ vary coherently along a connection $\nabla_t$ on $E$. The variation chain map $\operatorname{Var}:C^\ast(\fg;\R) \rightarrow \Omega_{dR}^\ast(E)$ induces via the cone construction a relative variation chain map
\[
\operatorname{Var}_{\nabla_t,\{\widetilde{\nabla}_t\}}: C^\ast_H(\fg,\{\fb\}) \rightarrow \Omega_{dR}^\ast(E,\{A\})
\]
whose induced map in cohomology computes the derivative at $t=0$  of the cohomology classes $\operatorname{Char}_{\nabla_t}(\omega,\{\beta\}) \in \HH^\ast_{dR}(E,\{A\})$. 
\end{theorem}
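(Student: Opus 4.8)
The plan is to assemble the final statement by gluing together, via the functoriality of the cone construction, the three relative chain maps already established in this section. Recall the absolute Fuks factorization exhibited in the commuting triangle
\[
\operatorname{Var}_{\nabla_t} = \operatorname{Char}_{\nabla_t}\circ \Fu\circ \operatorname{var}\colon C^\ast(\fg;\R)\longrightarrow \Omega^\ast_{dR}(E).
\]
The idea is to show that each of the three factors relativizes and that the relativized factors compose to give the desired map $\operatorname{Var}_{\nabla_t,\{\widetilde\nabla_t\}}$. Concretely, I would define
\[
\operatorname{Var}_{\nabla_t,\{\widetilde\nabla_t\}} := \operatorname{Char}_{\nabla_t,\{\widetilde\nabla_t^A\}}\circ \Fu \circ \operatorname{var},
\]
where $\operatorname{var}$ and $\Fu$ are the relative chain maps provided by Proposition~\ref{prop:relvarandemb} and $\operatorname{Char}_{\nabla_t,\{\widetilde\nabla_t^A\}}$ is the relative characteristic map of Proposition~\ref{prop:relcaahrnabla}. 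Since each factor is a chain map between cones, the composite is a chain map $C^\ast_H(\fg,\{\fb\})\to \Omega^\ast_{dR}(E,\{A\})$, which is the first assertion.

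The second assertion, that the induced map in cohomology computes the derivative at $t=0$ of $\operatorname{Char}_{\nabla_t}(\omega,\{\beta\})$, I would verify componentwise against the cone description of a relative cocycle from Section~\ref{sec:relcoho}. A relative cocycle $(\omega,\{\beta\})\in C^n_H(\fg,\{\fb\})$ has absolute part $\omega$ and relative parts $\beta^A$, and the cone differential structure means its image is the pair
\[
\big(\operatorname{Var}_{\nabla_t}(\omega),\ \{\operatorname{Var}_{\widetilde\nabla_t}(\beta^A)\}\big).
\]
For the absolute part this is exactly Fuks' computation recalled at the start of Subsection~\ref{subsec varchaclass}: the cohomology class $\operatorname{Char}_{\nabla_t}(\omega)$ is differentiable in $t$ and its derivative at $t=0$ is represented by $\operatorname{Var}_{\nabla_t}(\omega)$. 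For the relative part, the coherence condition of Definitions~\ref{def coherencecond} and~\ref{def coherenceconnection} is precisely what makes the limit $\lim_{t\to 0}\Delta(\beta,t)$ computed in \eqref{eqn:nabla} equal to $\operatorname{Var}_{\widetilde\nabla_t}(\beta^A_0)$; that computation was carried out in the displayed chain of equalities preceding the lemmas. Thus both components of the cone match the instant variation of the relative characteristic cocycle, and by the functoriality of the long exact sequence associated to a cone (Proposition~\ref{prop:proprelcohogrp}) the induced map in cohomology is the derivative we want.

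The one point requiring care, and the main obstacle, is checking that the cone-level gluing is internally consistent: namely that the relative part $\{\operatorname{Var}_{\widetilde\nabla_t}(\beta^A)\}$ is genuinely compatible with the absolute part under the cone differential, i.e. that it trivializes $j_A^\ast\operatorname{Var}_{\nabla_t}(\omega)$. This is exactly the identity
\[
d\operatorname{Var}_{\widetilde\nabla_t}(\beta^A_t) = j_A^\ast\big(\operatorname{Var}_{\nabla_t}(\omega)\big)
\]
derived above, whose proof crucially uses the $H$-invariance of $\omega$ (the first restriction imposed after Definition~\ref{def coherencecond}) to replace $\nabla_t$ by $\widetilde\nabla_t^A$ inside $\omega$ on each submanifold $A$. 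I would therefore organize the proof so that the verification of this compatibility, rather than the formal composition of chain maps, carries the real content; once it is in place, the rest follows purely formally from the functoriality statements already proved for $\operatorname{var}$, $\Fu$, and $\operatorname{Char}$.
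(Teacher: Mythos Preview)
Your proposal is correct and covers the same content as the paper, though your organization differs slightly. The paper presents Theorem~\ref{them relVarmap} as a direct summary of the computations already carried out in Subsection~\ref{subsec varchaclass}: the absolute variation $\operatorname{Var}_{\nabla_t}$ and the relative variations $\operatorname{Var}_{\widetilde\nabla_t^A}$ (obtained from the limit $\lim_{t\to 0}\Delta(\beta,t)$ under the coherence hypothesis) assemble, via the cone construction, into the relative chain map; the identity $d\operatorname{Var}_{\widetilde\nabla_t}(\beta^A)=j_A^\ast\operatorname{Var}_{\nabla_t}(\omega)$, derived from $H$-invariance of $\omega$, is precisely the cone compatibility. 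The factorization through $\operatorname{var}$ and $\Fu$ that you use to \emph{define} the relative variation map is in the paper the content of the \emph{next} result, Theorem~\ref{them factVarmap}.

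In other words, the paper first constructs $\operatorname{Var}_{\nabla_t,\{\widetilde\nabla_t\}}$ directly as the cone of the two $\operatorname{Var}$ maps, and only afterwards shows it factors; you instead define the map via the factorization and then unravel its components. Both routes are valid and rest on the same three ingredients you correctly isolate (Fuks' absolute computation, the coherence-based limit for the relative part, and the $H$-invariance compatibility). Your emphasis on the compatibility check as the substantive step is apt.
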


We also have a Fuks type factorization of the variation map:

\begin{theorem}\label{them factVarmap}
The relative variation map factors as:
\[
\xymatrix{
C^\ast_H(\fg,\{\fb\}) \ar[r]^-{\operatorname{var}} \ar@/_{1pc}/[drr]_{\operatorname{Var}}&  C^{\ast-1}_H(\fg,\{\fb\}; \fg^\vee,\{\fb^\vee\})\ar[r]^-{\Fu}  &  C^\ast_H(\widetilde{\fg},\{\widetilde{\fb}\})\ar[d]^{\operatorname{Char}_{\nabla_t,\{\widetilde{\nabla}_t\}}} \\
& &  \Omega^\ast_{dR}(E,\{A\})
}
\]
and this factorization is compatible with the restriction and connecting homomorphisms.
\end{theorem}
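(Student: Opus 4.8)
The plan is to observe that all three maps in the asserted factorization have already been produced as \emph{cone-induced} maps in Propositions~\ref{prop:relvarandemb} and \ref{prop:relcaahrnabla}, and then to exploit the strict functoriality of the cone construction to reduce the relative identity to two copies of the \emph{absolute} Fuks factorization of Section~\ref{subsec varchaclass}. Concretely, each of $\operatorname{var}$, $\Fu$ and $\operatorname{Char}_{\nabla_t,\{\widetilde{\nabla}_t\}}$ is of the form $\operatorname{Cone}(r,s)$ for a commuting square whose left column is the absolute map (over $\fg$, on $H$-invariant cochains) and whose right column is the boundary map (over each $\fb$). Since $\operatorname{Cone}(r,s)$ is block-diagonal, acting by $r$ on the absolute part and by $s$ on the relative part, the composite of two such maps is again $\operatorname{Cone}$ of the composite of the left columns and the composite of the right columns. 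Thus it suffices to verify the factorization separately on the absolute part and on each boundary piece.

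First I would treat the absolute part. On $C^\ast_H(\fg;\R)$ the three maps are literally the maps of Section~\ref{subsec varchaclass} restricted to the subspace of $H$-invariants; by Lemma~\ref{lem cochareHmodules} this subspace is preserved by $\operatorname{var}$ and $\Fu$, and $\operatorname{Char}_{\nabla_t}$ lands in $\Omega^\ast_{dR}(E)$ as before. Hence the absolute Fuks factorization $\operatorname{Char}_{\nabla_t}\circ\Fu\circ\operatorname{var}=\operatorname{Var}_{\nabla_t}$ holds verbatim on the absolute part. Next I would treat a single boundary component $A$ with its chosen subalgebra $\fb=\fb^A$. Here the boundary columns of the three squares are the \emph{plain} absolute maps for $\fb$ (no invariance is imposed, as noted after Definition~\ref{def relcohcoxes}), while the boundary characteristic map uses the coherently adjusted connection $\widetilde{\nabla}^A_t$ furnished by Definition~\ref{def coherencecond}. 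Applying the absolute factorization of Section~\ref{subsec varchaclass} to the data $(\fb,A,\widetilde{\nabla}^A_t)$ gives $\operatorname{Char}_{\widetilde{\nabla}^A_t}\circ\Fu\circ\operatorname{var}=\operatorname{Var}_{\widetilde{\nabla}^A_t}$ on $C^\ast(\fb;\R)$.

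Assembling the two pieces through the cone then shows that on a relative cocycle $(\omega,\{\beta\})$ the composite $\operatorname{Char}_{\nabla_t,\{\widetilde{\nabla}_t\}}\circ\Fu\circ\operatorname{var}$ has absolute part $\operatorname{Var}_{\nabla_t}(\omega)$ and relative part $\{\operatorname{Var}_{\widetilde{\nabla}^A_t}(\beta^A)\}$. This is exactly the pair $\operatorname{Var}_{\nabla_t,\{\widetilde{\nabla}_t\}}(\omega,\{\beta\})$ singled out by the coherence computation preceding Theorem~\ref{them relVarmap}, so the composite equals the relative variation map. Finally, compatibility with the restriction and connecting homomorphisms is automatic: because every map in sight is of the form $\operatorname{Cone}(r,s)$, it induces the commuting ladder between the long exact sequences of Proposition~\ref{prop:proprelcohogrp}(2) described at the end of Section~\ref{sec:relcoho}, whose horizontal arrows are precisely the restriction map to $\prod\HH^\ast(\fb)$ and the connecting homomorphism $\delta$.

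I expect the only genuine obstacle to be the bookkeeping of the connection on the boundary: the relative characteristic map uses $\widetilde{\nabla}^A_t=\operatorname{Ad}_{h_t}\nabla_t\vert_A$ rather than the naive restriction $\nabla_t\vert_A$, and one must check that the boundary instance of the absolute factorization reproduces exactly the relative part $\operatorname{Var}_{\widetilde{\nabla}^A_t}(\beta^A)$ built into the relative variation map via the coherence condition. Once this matching is in place, everything else is the block-diagonal algebra of cones together with the already-proven absolute identity, and no further computation is required.
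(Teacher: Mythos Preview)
Your approach is correct and coincides with the paper's own argument: the paper does not write out a separate proof for this theorem but presents it as the summary of the preceding development (Propositions~\ref{prop:relvarandemb} and~\ref{prop:relcaahrnabla} together with the absolute Fuks factorization), which is exactly the reduction via cone-functoriality that you carry out. Your treatment makes explicit the one point the paper leaves tacit, namely that on each boundary component the absolute factorization must be invoked for the adjusted connection $\widetilde{\nabla}^A_t$ rather than $\nabla_t\vert_A$, so that the relative part matches the $\operatorname{Var}_{\widetilde{\nabla}^A_t}(\beta^A)$ computed under the coherence hypothesis.
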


\section{The volume of a representation}\label{sec:volrelatif}

\subsection{Set-up and notations}\label{subsec:notvolrep}

Let us start with some definitions and notations involving the structure of the 
groups $\mathrm{SL}_n(\C)$. We will regard this groups as real Lie groups. Recall 
then that for each $n\geq 2$, the group $\mathrm{SU}(n) \subset 
\mathrm{SL}_n(\C)$ is a maximal compact subgroup. Let $D_n \subset 
\mathrm{SL}_n(\C)$ denote the subgroup of diagonal matrices, then $D_n \cap 
\mathrm{SU}(n) = T$ is a maximal real torus isomorphic to 
$(\mathbb{S}^1)^{n-1}$. By definition a Borel subgroup of $\mathrm{SL}_n(\C)$  
is a maximal solvable subgroup; the Borel subgroups  are also  the stabilizers 
of complete flags in $\C^n$, the Gram-Schmidt process then shows that the 
subgroup $\mathrm{SU}(n)$ acts transitively on complete flags, and hence that 
all Borel subgroups are pairwise conjugated in $\mathrm{SL}_n(\C) $ by elements in 
$\mathrm{SU}(n)$.

We fix as our model Borel subgroup $B \subset \mathrm{SL}_n(\C)$ the subgroup of 
upper-triangular matrices. 
In particular the transitive action by conjugation of $\mathrm{SU}(n)$ on the 
set of all Borel subgroups provides  each of these with a specified choice of a 
maximal compact subgroup.
Denote by $U_n \subset B$ the subgroup of unipotent matrices; this is a normal 
subgroup and gives $B$ the structure of a semi-direct product $B = U_n \rtimes 
D_n$.

Again by the Gram-Schmidt process, the inclusion $B \hookrightarrow \mathrm{SL}_n(\C)$ induces an homeomorphism of homogeneous manifolds $B/ T \simeq \mathrm{SL}_n(\C)/\mathrm{SU}(n)$. For $n=2$, this 
symmetric space is hyperbolic space.  For normalization purposes, let us recall that  
\begin{equation}
\label {eqn:normalization}
\begin{pmatrix} e^{(l+i\theta)/2} & 0 \\ 0 & e^{-(l+i\theta)/2} \end{pmatrix}= \exp \begin{pmatrix} (l+i\theta)/2 & 0 \\ 0 & {-(l+i\theta)/2} \end{pmatrix}
\end{equation}
acts on $\mathrm{SL}_2(\mathbb{C})/\mathrm{SU}(2) \simeq \cH^3$ 
as the composition of a loxodromic isometry of translation length $l$ composed with a rotation of angle $\theta$ along the same axis, cf.~\cite[\S 12.1]{MR1937957}.
%
%
%
%
%

Let $\pi$ denote the fundamental group of $M$, the compact three-manifold with nonempty boundary, whose interior
is hyperbolic of finite volume. The $k\geq 1$ boundary components  are tori, 
 $\partial {M} = T^2_1 \sqcup T^2_2 \cdots \sqcup 
T_k^2$. For each boundary component of ${M}$ fix a path from the 
basepoint  of $M$ to the boundary, this gives us a definite choice of a 
peripheral system $P_1, \cdots,P_k$ in $\pi$, where $P_i \simeq \pi_1( T^2_ i)$.


Let's now fix a representation $\rho: \pi \rightarrow \mathrm{SL}_n(\mathbb{C})$ 
for some $n \geq 2$. Since each peripheral subgroup $P_i$ is abelian and the 
Borel subgroups of $\mathrm{SL}_n(\mathbb{C})$ are maximal solvable subgroups,  
the image of the restriction of $\rho$ to $P_i$ lies in a Borel subgroup.  Fix 
for each peripheral subgroup $P_i$ such a Borel subgroup $B_i$. 

\subsection{Some known results in bounded and continuous cohomology}\label{subsec:rappelcontcoho}

The continuous cohomology of the groups $\mathrm{SL}_n(\C)$ has a rather simple structure:
\begin{proposition}\cite{MR0051508}\label{prop:cohoconSL}
 Let $n \geq 1$ be an integer, then $\HH^\ast_c(\mathrm{SL}_n(\C))$ is an exterior algebra:
 \[
  \HH^\ast_c(\mathrm{SL}_n(\C)) = \bigwedge \langle x_{n,j} \ | \ 1 \leq j \leq n \rangle 
 \]
over so-called Borel classes $x_{n,j}$ of degree $2j+1$. These classes are 
stable, if $j_n : \mathrm{SL}_n(\C) \rightarrow \mathrm{SL}_{n+1}(\C)$ denotes 
the inclusion in the upper left corner, then for $j \leq n$, 
$j_n^\ast(x_{n+1,j}) = x_{n,j}$. 
\end{proposition}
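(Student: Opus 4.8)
The plan is to read off $\HH^\ast_c(\mathrm{SL}_n(\C))$ from the associated symmetric space, using the van Est description recalled in Example~(1) of Section~\ref{sec:relcoho}. Writing $G=\mathrm{SL}_n(\C)$ and $K=\mathrm{SU}(n)$, van Est's theorem gives a natural isomorphism $\HH^\ast_c(G)\cong \HH^\ast\big(\Omega^\ast_{dR}(G/K)^G\big)$ between the continuous cohomology and the cohomology of the complex of $G$-invariant forms on the symmetric space $X_n=G/K$. Since $X_n$ is of noncompact type, every $G$-invariant form is closed and the differential vanishes identically on $\Omega^\ast_{dR}(X_n)^G$; hence this cohomology is simply the space of invariant forms, which at the base point is $\big(\textstyle\bigwedge^\ast\mathfrak{p}^\vee\big)^K$, where $\fsl_n(\C)=\fsu(n)\oplus\mathfrak{p}$ is the Cartan decomposition and $\mathfrak{p}=i\,\fsu(n)$.

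The first key step is to identify these invariant forms with the cohomology of the compact dual symmetric space $X_n^u$. Replacing $\mathfrak{p}$ by $i\mathfrak{p}$ leaves the $K$-module $\big(\bigwedge^\ast\mathfrak{p}^\vee\big)^K$ unchanged, so $\big(\bigwedge^\ast\mathfrak{p}^\vee\big)^K\cong \HH^\ast(X_n^u)$. For a complex semisimple group regarded as a real group, the compact dual of $G/K$ is the group manifold of its maximal compact subgroup: here $\mathfrak{g}_u=\fsu(n)\oplus i\mathfrak{p}=\fsu(n)\oplus\fsu(n)$ with $K$ embedded diagonally, so $X_n^u=(\mathrm{SU}(n)\times\mathrm{SU}(n))/\Delta\,\mathrm{SU}(n)\cong \mathrm{SU}(n)$. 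Combining these steps yields a natural isomorphism $\HH^\ast_c(\mathrm{SL}_n(\C))\cong \HH^\ast(\mathrm{SU}(n);\R)$.

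Next I would invoke the classical computation of the real cohomology of a compact connected Lie group (Hopf's theorem): $\HH^\ast(\mathrm{SU}(n);\R)$ is the exterior algebra on primitive generators, one in each of the degrees $3,5,\dots,2n-1$. Calling $x_{n,j}$ the generator of degree $2j+1$ then gives exactly the asserted exterior algebra structure. Finally, stability follows from naturality: the inclusion $j_n\colon\mathrm{SL}_n(\C)\hookrightarrow\mathrm{SL}_{n+1}(\C)$ in the upper-left corner carries $\mathrm{SU}(n)$ into $\mathrm{SU}(n+1)$, and since the van Est resolution and the compact-dual identification are functorial in the pair (group, maximal compact), $j_n^\ast$ corresponds under the isomorphisms to the ordinary restriction $\HH^\ast(\mathrm{SU}(n+1))\to\HH^\ast(\mathrm{SU}(n))$. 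Passing to the colimit $\mathrm{SU}=\bigcup_n\mathrm{SU}(n)$, whose cohomology is the free exterior algebra on one primitive generator in each odd degree $\ge 3$, one sees that each $x_{n+1,j}$ restricts to $x_{n,j}$ in the appropriate range while the top generator is killed, which is the claimed stability.

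The main obstacle I expect lies in the middle two steps: verifying carefully that the differential of the invariant de Rham complex vanishes, and that the Cartan-dual identification $\big(\bigwedge^\ast\mathfrak{p}^\vee\big)^K\cong \HH^\ast(\mathrm{SU}(n))$ is natural for the corner inclusions, so that the restriction maps genuinely match the standard ones on the compact groups. Everything else reduces to a citation of Hopf's theorem and to the functoriality of the van Est resolution already recorded in Section~\ref{sec:relcoho}.
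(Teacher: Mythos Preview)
The paper does not give a proof of this proposition: it is stated as a citation of Borel's classical computation \cite{MR0051508} and used as input. So there is no ``paper's own proof'' to compare against; your task here was really to supply a derivation the paper takes for granted.

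Your outline is the standard one and is correct. A couple of remarks. First, the vanishing of the differential on $\Omega^\ast_{dR}(X_n)^G$ is exactly the fact the paper itself uses in the proof of Lemma~\ref{lem:omegaisclosed}: since $[\mathfrak p,\mathfrak p]\subset\mathfrak k$ for the Cartan decomposition, the Chevalley--Eilenberg differential restricted to $(\bigwedge^\ast\mathfrak p^\vee)^K$ is zero. So this step is not an obstacle. Second, the identification of the compact dual of $\mathrm{SL}_n(\C)/\mathrm{SU}(n)$ with $\mathrm{SU}(n)$ and the ensuing isomorphism $\HH^\ast_c(\mathrm{SL}_n(\C))\cong\HH^\ast(\mathrm{SU}(n);\R)$ is exactly the bridge Borel uses; naturality for the corner inclusions is immediate because the Cartan involution (transpose-conjugate-inverse) is compatible with these inclusions, so the map on invariant forms is simply restriction. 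Your concern about this point is unwarranted.

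One small correction worth noting (about the statement rather than your argument): $\HH^\ast(\mathrm{SU}(n);\R)$ has primitive generators in degrees $3,5,\dots,2n-1$, i.e.\ $n-1$ generators, so the index in the proposition should run over $1\le j\le n-1$; your sketch has this right.
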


\begin{remark}
For $\mathrm{SL}_2(\mathbb{C})$ the Borel class $x_1$ is also known as the hyperbolic volume class and 
we denote it by $\operatorname{vol}_{\cH^3}$. It is completely determined by 
stability and the requirement that on $\mathrm{SL}_2(\mathbb{C})$ it is  represented by 
the cocycle 
\[
(A,B,C,D) \mapsto \int_{(A\ast,B\ast,C\ast,D\ast)}  d\operatorname{vol}_{ \cH^3}  
\]
where $(A\ast,B\ast,C\ast,D\ast)$ denotes the hyperbolic oriented tetrahedron 
with geodesic faces spanned by  the  four images of the base  point $\ast \in 
\cH^3$ by $A,B,C$ and $D$ respectively and $d\operatorname{vol}_{ \cH^3}$ is the 
hyperbolic volume form. Notice that this cocycle  is  bounded by the maximal 
volume of an ideal tetrahedron. See for instance \cite[Section 3]{MR1649192} for a thorough discussion of volumes of hyperbolic manifolds and continuous cocycles.
\end{remark}
 
Compared to the relatively simple structure of the continuous cohomology, the  
continuous-bounded cohomology of $\mathrm{SL}_n(\C)$ is considerably more complicated and largely unknown, see Monod~\cite{MR1840942}. Nevertheless, fitting well our purposes  we have the following:

\begin{proposition}\cite{MR2077038}\label{prop:compcohoconSL}
The canonical comparison  map $\HH^3_{cb}(\mathrm{SL}_n(\C)) \rightarrow \HH^3_{c}(\mathrm{SL}_n(\C))$ is surjective. 
\end{proposition}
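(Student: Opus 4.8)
The plan is to reduce the statement to the boundedness of a single cohomology class and then to produce an explicit bounded representative for it. First I would invoke Proposition~\ref{prop:cohoconSL}: for $n=1$ the group is trivial and there is nothing to prove, while for $n\geq 2$ the degree-$3$ part of $\HH^\ast_c(\mathrm{SL}_n(\C))$ is one-dimensional, spanned by the Borel class $x_{n,1}$ of degree $3$ (there is no room for decomposable classes, since the lowest exterior generator already sits in degree $3$). As the comparison map is $\R$-linear, surjectivity in degree $3$ is equivalent to the single assertion that $x_{n,1}$ lies in its image, i.e.\ that $x_{n,1}$ admits a continuous-bounded cocycle representative.

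To produce such a representative I would use the van Est description recalled in Section~\ref{subsec volascharac}: the class $x_{n,1}$ is represented by an $\mathrm{SL}_n(\C)$-invariant closed $3$-form $\omega$ on the symmetric space $X_n=\mathrm{SL}_n(\C)/\mathrm{SU}(n)$, and the corresponding continuous group cocycle is obtained by geodesic straightening, namely
\[
c(g_0,\dots,g_3)=\int_{[g_0\cdot o,\,g_1\cdot o,\,g_2\cdot o,\,g_3\cdot o]}\omega,
\]
where $o\in X_n$ is a fixed basepoint and $[g_0\cdot o,\dots,g_3\cdot o]$ denotes the geodesic simplex spanned by the four translates of $o$. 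For $n=2$ this is exactly the ideal-tetrahedron cocycle for $\operatorname{vol}_{\cH^3}$ appearing in the Remark after Proposition~\ref{prop:cohoconSL}, which is bounded by the maximal volume of an ideal tetrahedron in $\cH^3$.

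The crux, and the point I expect to be the main obstacle, is showing that $c$ is bounded for \emph{every} $n$. In rank one this is the classical fact that hyperbolic tetrahedra have uniformly bounded volume, but in higher rank geodesic simplices in $X_n$ can have arbitrarily large Riemannian volume, so one cannot simply estimate $\int\omega$ by $\|\omega\|_\infty\cdot\operatorname{vol}$. I would instead invoke the sharper boundedness of the integral of this particular primitive invariant form over straightened simplices (Dupont--Gromov-type estimates), or, equivalently, pass to the Burger--Monod model of continuous-bounded cohomology: $\HH^\ast_{cb}(\mathrm{SL}_n(\C))$ is computed by the complex of essentially bounded alternating $\mathrm{SL}_n(\C)$-invariant cochains on a Furstenberg boundary $\mathrm{SL}_n(\C)/B$, and it then suffices to exhibit a bounded invariant boundary cocycle whose image under the comparison map is $x_{n,1}$. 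Producing and controlling this boundary cocycle is the only genuinely non-formal step.

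Finally I would record what naturality does and does not give. The comparison maps are natural, so for the upper-left-corner inclusion $j_n\colon\mathrm{SL}_n(\C)\hookrightarrow\mathrm{SL}_{n+1}(\C)$ one gets a commuting square relating the two comparison maps, and since $j_n^\ast(x_{n+1,1})=x_{n,1}$ is an isomorphism in degree $3$, surjectivity for $n+1$ \emph{formally} implies it for $n$. This reduces the problem to large $n$ (equivalently to the colimit $\mathrm{SL}(\C)$), but it crucially does not allow one to bootstrap upward from the known case $n=2$; hence the uniform boundedness argument above is genuinely needed.
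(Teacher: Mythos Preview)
The paper does not prove this proposition: it is stated with a citation to Monod~\cite{MR2077038} and no argument is given. So there is no ``paper's own proof'' to compare your attempt against.

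That said, your outline is a sensible sketch of one standard route to the result, and you have correctly isolated the only non-formal step: producing a bounded cocycle representing the generator of $\HH^3_c(\mathrm{SL}_n(\C))$ for $n\geq 3$. Your reduction to a single class via Proposition~\ref{prop:cohoconSL} is correct, and your remark that naturality under $j_n$ only propagates surjectivity \emph{downward} in $n$ is also correct and worth keeping. Where your write-up remains a sketch rather than a proof is precisely the paragraph on boundedness: invoking ``Dupont--Gromov-type estimates'' or ``the Burger--Monod boundary model'' names the relevant machinery but does not carry it out. In particular, you are right that in higher rank one cannot bound $\int_\Delta\omega$ by $\|\omega\|_\infty\cdot\operatorname{vol}(\Delta)$, so an actual argument must either (i) exploit the specific algebraic form of the primitive class (e.g.\ via the dilogarithm/Bloch--Wigner function and Goncharov's explicit cocycle on configurations of flags), or (ii) work directly in the bounded cohomology of $\mathrm{SL}_n(\C)$ computed on a boundary and exhibit a bounded alternating cocycle there. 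Either completion is substantial and is exactly what the cited reference supplies; your proposal as written is an accurate roadmap but not yet a self-contained proof.
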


For continuous bounded we have  also the following crucial feature, which applies in particular to the Borel and unipotent  subgroups of $\mathrm{SL}_n(\C)$:

\begin{proposition}\label{prop:contcohoamen}
 Let $G$ denote an amenable Lie group, e.g.  abelian or  solvable, then $\HH^\ast_{cb}(G) = 0$ for $\ast >0$.
\end{proposition}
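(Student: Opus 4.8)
The plan is to exploit amenability directly at the cochain level by producing a contracting homotopy on a complex computing $\HH^\ast_{cb}(G)$. Recall that $\HH^\ast_{cb}(G)$ is computed not only by the inhomogeneous complex $C^\ast_{cb}(G;\R)$ but equally well by the \emph{homogeneous} resolution, whose degree-$n$ term consists of the bounded $G$-invariant functions $f\colon G^{n+1}\to\R$ (for the diagonal left translation action), equipped with the usual simplicial differential
\[
(df)(g_0,\dots,g_{n+1})=\sum_{i=0}^{n+1}(-1)^i f(g_0,\dots,\widehat{g_i},\dots,g_{n+1}).
\]
Since $G$ is amenable it carries a left-invariant mean $m$, that is, a positive linear functional of norm one on the bounded functions on $G$ that is invariant under left translation. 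Integration against $m$ in a single variable will serve as the averaging operator.

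First I would define the homotopy $h\colon C^n\to C^{n-1}$ by averaging the first slot,
\[
(hf)(g_0,\dots,g_{n-1}) = \int_G f(x,g_0,\dots,g_{n-1})\,dm(x),
\]
the mean $m$ acting in the variable $x$. Using the invariance identity $f(x,gg_0,\dots,gg_{n-1})=f(g^{-1}x,g_0,\dots,g_{n-1})$ together with the left-invariance of $m$, one checks that $hf$ is again $G$-invariant, so $h$ lands in the invariant complex; boundedness of $hf$ by $\|f\|_\infty$ is immediate from $\|m\|=1$.

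The core computation is the standard simplicial identity. Expanding $(df)(x,g_0,\dots,g_n)=f(g_0,\dots,g_n)-\sum_{i=0}^n(-1)^i f(x,g_0,\dots,\widehat{g_i},\dots,g_n)$ and applying $m$ in $x$ gives $hdf=f-\sum_{i}(-1)^i\!\int f(x,g_0,\dots,\widehat{g_i},\dots,g_n)\,dm(x)$, whereas $dhf=\sum_{i}(-1)^i\!\int f(x,g_0,\dots,\widehat{g_i},\dots,g_n)\,dm(x)$; adding these yields $(dh+hd)f=f$ in every degree $n\geq 1$. Hence the positive-degree cohomology of the invariant complex vanishes, which is precisely the assertion $\HH^\ast_{cb}(G)=0$ for $\ast>0$ (in degree $0$ the invariants are the constants $\R$).

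The one genuine technical point — the main obstacle — is the legitimacy of feeding the cochains to the mean: $m$ is naturally defined on $L^\infty(G)$ or on the bounded uniformly continuous functions, so one must ensure that the averaged cochain remains in a model of the resolution that actually computes $\HH^\ast_{cb}(G)$. This is settled within Monod's framework, where $\HH^\ast_{cb}(G)$ may be computed from the homogeneous resolution by bounded measurable (weak-$\ast$) cochains $L^\infty(G^{\bullet+1})$, on which $m$ acts with no continuity issue; equivalently, the same content is phrased as the statement that for amenable $G$ the trivial module $\R$ is relatively injective, so the standard resolution already exhibits the vanishing. Finally, the cases quoted in the statement — $G$ abelian or solvable, and in particular the Borel subgroups $B=U_n\rtimes D_n$ together with their unipotent part $U_n$ and diagonal part $D_n$ — are all amenable, so the conclusion applies to each of them.
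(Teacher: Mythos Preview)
Your argument is correct and is the standard one: amenability yields an invariant mean, and averaging over the first variable produces a contracting homotopy on the homogeneous bounded complex. You have also correctly identified and addressed the one subtle point, namely that the mean a priori acts on $L^\infty(G)$ rather than on continuous functions, and that this is handled by computing $\HH^\ast_{cb}(G)$ via the measurable resolution $L^\infty(G^{\bullet+1})^G$ as in Monod's monograph; equivalently, amenability of $G$ makes $\R$ relatively injective, which is the abstract form of the same averaging.

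The paper itself does not prove this proposition: it is quoted as a foundational feature of continuous bounded cohomology and used as a black box (the reference in the background being Monod's work). So there is no ``paper's proof'' to compare with; you have simply supplied the well-known argument behind the cited fact. If anything, one could add a word to the effect that continuity of the cochains is recovered a posteriori (or is irrelevant) because the $L^\infty$ resolution computes the same groups, but you already say this.
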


We are now ready to define the the volume of our representation $\rho: \pi \rightarrow \mathrm{SL}_n(\mathbb{C})$. The long exact sequence in continuous cohomology for the pair $(\mathrm{SL}_n(\C), \{B_i\})$, where $\{B_i\}$ stands for the family of Borel subgroups we have fixed together with Proposition~\ref{prop:contcohoamen} gives immediately:

\begin{proposition}\label{prop:isorelcoho}
 For $\ast \geq 2$, the map induced by forgetting the relative part induces an  isomorphism: \[\HH^\ast_{cb}(\mathrm{SL}_n(\C), \{B_i\}) \stackrel{\sim}{\longrightarrow} \HH^\ast_{cb}(\mathrm{SL}_n(\C)).\] 
\end{proposition}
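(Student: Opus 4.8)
The plan is to read the isomorphism directly off the long exact sequence of the pair, exploiting that the Borel subgroups are amenable so that their positive-degree continuous-bounded cohomology vanishes.

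First I would invoke the long exact sequence of Proposition~\ref{prop:proprelcohogrp}(2), applied to the continuous-bounded theory $\HH^\ast_{cb}$ with $G=\mathrm{SL}_n(\C)$, the fixed family $\{B_i\}$ of Borel subgroups, and trivial $\R$-coefficients throughout. This is legitimate because $\HH^\ast_{cb}$ is computed by the functorial cochain complex $C^\ast_{cb}(G;\R)$ of continuous-bounded functions (see the Examples, item~(2)), so the cone construction of Section~\ref{sec:relcoho} and its associated long exact sequence apply verbatim; in particular no use is made of the metric structure, which we have already declined to discuss. Concretely the sequence reads
\[
\cdots \longrightarrow \prod_i \HH^{n-1}_{cb}(B_i) \overset{\delta}{\longrightarrow} \HH^n_{cb}(\mathrm{SL}_n(\C),\{B_i\}) \longrightarrow \HH^n_{cb}(\mathrm{SL}_n(\C)) \longrightarrow \prod_i \HH^n_{cb}(B_i) \longrightarrow \cdots,
\]
and the middle arrow is exactly the map that forgets the relative part.

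Next I would feed in the vanishing. Each $B_i$ is a Borel subgroup, hence maximal solvable, hence amenable, so Proposition~\ref{prop:contcohoamen} gives $\HH^\ast_{cb}(B_i)=0$ for every $\ast>0$; consequently $\prod_i \HH^\ast_{cb}(B_i)=0$ for all $\ast\geq 1$. Fixing $n\geq 2$, both flanking terms $\prod_i \HH^{n-1}_{cb}(B_i)$ (with $n-1\geq 1$) and $\prod_i \HH^n_{cb}(B_i)$ then vanish, and exactness collapses the sequence to
\[
0 \longrightarrow \HH^n_{cb}(\mathrm{SL}_n(\C),\{B_i\}) \longrightarrow \HH^n_{cb}(\mathrm{SL}_n(\C)) \longrightarrow 0,
\]
so the forgetful map is an isomorphism.

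There is no genuine obstacle here; the single point deserving care is the degree bound. Injectivity of the middle arrow requires the vanishing of the preceding product $\prod_i \HH^{n-1}_{cb}(B_i)$, and this fails at $n=1$ because $\HH^0_{cb}(B_i)=\R\neq 0$. The hypothesis $n\geq 2$ is used precisely to push this left-hand product into positive degree, where amenability annihilates it; I would make this explicit so the reader sees why the statement cannot be strengthened to $\ast\geq 1$.
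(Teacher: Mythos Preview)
Your argument is correct and is exactly the approach the paper takes: the statement is presented there as an immediate consequence of the long exact sequence of the pair together with Proposition~\ref{prop:contcohoamen}. Your write-up simply makes explicit the two ingredients the paper leaves implicit, namely the applicability of the cone long exact sequence to $C^\ast_{cb}$ and the reason for the threshold $\ast\geq 2$.
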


\begin{remark}\label{rem:nonisoencont}
Under the hypothesis of Proposition~\ref{prop:isorelcoho} above, and since the groups $B_i$ are the Borel subgroups of $SL_n(\C)$, by Corollary~3 in
\cite{MR2900175}  the long exact sequence  in continuous cohomology of Proposition~\ref{prop:proprelcohogrp} splits into short exact sequences:
\[
\xymatrix{
0 \ar[r] & \prod_i\HH^{\ast-1}_c(B_i) \ar[r] & \HH^\ast_{c}(\mathrm{SL}_n(\C), \{B_i\}) \ar[r] & \HH^\ast_{c}(\mathrm{SL}_n(\C)) \ar[r] & 0 \\
}
\]
Moreover, since all Borel subgroups are conjugated, all groups $\HH^\ast_c(B_i)$ are isomorphic one to each other. However since $\HH^\ast_c(B_i) \neq 0$, for instance for $\ast=1$, we do not have in general an isomorphism as for continuous \emph{bounded} cohomology.
\end{remark}


Comparing continuous cohomology  and bounded continuous cohomology for the pair $(\mathrm{SL}_n(\mathbb{C}), \{B_i\})$ gives us a commutative diagram:

\[
\xymatrix{
 \HH^3_{cb}(\mathrm{SL}_n(\C), \{B_i\}) \ar[r]^-\sim \ar[d] & \HH^3_{cb}(\mathrm{SL}_n(\C)) \ar@{>>}[d] \\
 \HH^3_c(\mathrm{SL}_n(\C),\{B_i\}) \ar[r]  & \HH^3_c(\mathrm{SL}_n(\C)) 
}
\]
This shows that the continuous-bounded cohomology class $\operatorname{vol}_{\cH}$ has a canonical representative as a continuous  bounded  relative class $\operatorname{vol}_{\cH,\partial} \in \HH^3_c(\mathrm{SL}_n(\C),\{B_i\})$.

By construction the representation $\rho$ induces  a map of pairs $\rho\colon (\pi, \{ P_i \}) \rightarrow (\mathrm{SL}_n(\C), \{B_i\})$, hence by functoriality we have an induced map in continuous cohomology
\[
 \HH^3_c(\mathrm{SL}_n(\C), \{B_i\}) \stackrel{\rho^\ast}{\longrightarrow} \HH^3_c(\pi, \{ P_i \}).
\]
But for discrete groups continuous cohomology and ordinary group cohomology coincide, so we have a well-defined class, up to a possible ambiguity  given by the choice of the Borel subgroups $B_i$,
\[
 \rho^\ast(\operatorname{vol}_{\cH,\partial}) \in \HH^3(\pi, \{ P_i \}).
\]

\begin{proposition}\label{prop:volindepBorelchoice}
The class $\rho^\ast(\operatorname{vol}_{\cH,\partial}) \in \HH^3_c(\pi, \{ P_i \})$ is independent of the possible choice of a different family of Borel subgroups $\{B_i\}$.
\end{proposition}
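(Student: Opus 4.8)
The plan is to show that the apparent dependence on the family $\{B_i\}$ evaporates once the whole construction is lifted to continuous-bounded cohomology, where amenability of the peripheral data forces canonical isomorphisms with no ambiguity. Recall that $\operatorname{vol}_\cH \in \HH^3_{cb}(\mathrm{SL}_n(\C))$ is intrinsic, and that by Proposition~\ref{prop:isorelcoho} the forgetful map $\HH^3_{cb}(\mathrm{SL}_n(\C),\{B_i\}) \xrightarrow{\sim} \HH^3_{cb}(\mathrm{SL}_n(\C))$ is an isomorphism for every admissible family; let $\widetilde{v}_{\{B_i\}}$ denote the unique preimage of $\operatorname{vol}_\cH$, so that by construction $\operatorname{vol}_{\cH,\partial}$ is the image of $\widetilde{v}_{\{B_i\}}$ under the comparison map to $\HH^3_c(\mathrm{SL}_n(\C),\{B_i\})$.

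The first step is to run the identical argument on the source. Since each $P_i \cong \Z^2$ is abelian, hence amenable, Proposition~\ref{prop:contcohoamen} gives $\HH^\ast_{cb}(P_i)=0$ for $\ast>0$, and the long exact sequence of the pair $(\pi,\{P_i\})$ produces, exactly as in Proposition~\ref{prop:isorelcoho}, a canonical isomorphism $\HH^3_{cb}(\pi,\{P_i\}) \xrightarrow{\sim} \HH^3_{cb}(\pi)$. The essential point is that this isomorphism, and the group $\HH^3_{cb}(\pi,\{P_i\})$ itself, depend only on $\pi$ and the fixed peripheral family $\{P_i\}$, and in no way on the choice of $\{B_i\}$.

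For each choice with $\rho(P_i)\subset B_i$ the representation is a morphism of pairs $\rho\colon(\pi,\{P_i\})\to(\mathrm{SL}_n(\C),\{B_i\})$, so naturality of the forgetful maps gives a commutative square
\[
\begin{CD}
\HH^3_{cb}(\mathrm{SL}_n(\C),\{B_i\}) @>\sim>> \HH^3_{cb}(\mathrm{SL}_n(\C)) \\
@V\rho^\ast VV @VV\rho^\ast V \\
\HH^3_{cb}(\pi,\{P_i\}) @>\sim>> \HH^3_{cb}(\pi).
\end{CD}
\]
Chasing $\widetilde{v}_{\{B_i\}}$ around this square shows that $\rho^\ast(\widetilde{v}_{\{B_i\}})$ maps, under the bottom isomorphism, to $\rho^\ast(\operatorname{vol}_\cH)$, a class that visibly does not involve $\{B_i\}$. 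As the bottom map is injective, $\rho^\ast(\widetilde{v}_{\{B_i\}})\in\HH^3_{cb}(\pi,\{P_i\})$ is independent of the choice of $\{B_i\}$. Applying the comparison map and using its naturality for the morphism of pairs $\rho$ then yields $\rho^\ast(\operatorname{vol}_{\cH,\partial})=\mathrm{comp}\big(\rho^\ast(\widetilde{v}_{\{B_i\}})\big)$ in $\HH^3(\pi,\{P_i\})$, which is therefore independent of $\{B_i\}$, as claimed.

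The main thing to verify carefully is that all of this is legitimate within the cone formalism of Section~\ref{sec:relcoho} for the mixed situation at hand — target a Lie group in continuous(-bounded) cohomology, source the discrete group $\pi$ — namely that the forgetful isomorphisms and the bounded-to-ordinary comparison map are natural with respect to the morphism of pairs $\rho$. Granting the formalism, these reduce to formal diagram chases; the genuine input is the vanishing $\HH^\ast_{cb}(P_i)=0$, which is precisely what removes the ambiguity that persists in ordinary cohomology, where $\HH^2_c(P_i)\neq0$ (cf.\ Remark~\ref{rem:nonisoencont}).
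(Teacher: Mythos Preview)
Your proof is correct and takes a genuinely different route from the paper's. The paper argues on the target side: given two choices $B_j$ and $B_j'$ it introduces the intermediate pair $(\mathrm{SL}_n(\C),\{B_{i\neq j},B_j\cap B_j'\})$, uses that $B_j\cap B_j'$ is again amenable, and builds a roof in bounded cohomology over $\HH^3_{cb}(\mathrm{SL}_n(\C))$ through which both pullbacks to $\HH^3_{cb}(\pi,\{P_i\})$ factor. You instead argue on the source side: since the $P_i$ themselves are amenable, the forgetful map $\HH^3_{cb}(\pi,\{P_i\})\to\HH^3_{cb}(\pi)$ is already an isomorphism, so the bounded relative pullback is pinned down by the absolute class $\rho^\ast(\operatorname{vol}_\cH)$, which never sees $\{B_i\}$. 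Your approach is shorter and avoids the auxiliary intersection family; the paper's approach has the mild advantage of not invoking any property of the $P_i$ beyond the fact that they land in Borel subgroups, so it would apply verbatim even if the peripheral subgroups were not known to be amenable. Your closing caveat is apt but harmless: for discrete $\pi$ continuous-bounded cohomology is ordinary bounded cohomology, computed by the bounded bar complex, and this is functorial and compatible with the cone construction of Section~\ref{sec:relcoho}; naturality of the comparison map $\HH^\ast_{cb}\to\HH^\ast_c$ for morphisms of pairs is immediate from the inclusion of bounded cochains into all cochains.
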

\begin{proof}
Let us assume for clarity that we have two possible choices $B_j$ and $B_j'$ for the Borel subgroup that contains $\rho(P_j)$, and that we make a unique choice for the rest of the peripheral subgroups. We denote the two families of subgroups by $\{B_{i \neq j}, B_j\}$ and $\{B_{i\neq j}, B'_j\}$. Because Borel subgroups are closed in $\mathrm{SL}_n(\C)$, their intersection, as $B_i \cap B_i'$, is also amenable. The restriction of $\rho$ to the peripheral subgroup $P_j$ factors in both cases through this intersection, so we have a commutative diagram of group homomorphisms:
\[
\xymatrix{
& & (\mathrm{SL}_n(\C),\{B_{i \neq j}, B_j\})  \\
(\pi,\{P_{i\neq j},P_j\}) \ar@/^1pc/[urr] \ar@/_1pc/[drr] \ar[r] & (\mathrm{SL}_n(\mathbb{C}),\{B_{i \neq j}, B_j \cap B'_j\})  \ar[ur] \ar[dr]& \\  
& & (\mathrm{SL}_n(\C),\{B_{i \neq j}, B'_j\})  
}
\]
Together with the forgetful isomorphisms  to the absolute cohomology of $\mathrm{SL}_n(\C)$, and given  the fact that the subgroups involved are all amenable, we have a commutative diagram:
\[
\xymatrix{
	& & \HH^3_{cb}(\mathrm{SL}_n(\C),\{B_{i \neq j}, B_j\}) \ar@/_1pc/[dll] \ar[dl] \ar[d]^{\wr} \\
	\HH^3_{cb}(\pi,\{P_{i \neq j}, P_i\} )  & \HH^3_{cb}(\mathrm{SL}_n(\mathbb{C}),\{B_{i \neq j}, B_j \cap B'_j\} )  \ar[l]  \ar[r]^-\sim & \HH^3_{cb}(\mathrm{SL}_n(\C)) \\  
	& & \HH^3_{cb}(\mathrm{SL}_n(\C),\{B_{i \neq j}, B'_j\}) \ar[ul]   \ar@/^1pc/[ull] \ar[u]_{\wr}
}
\]

and this finishes the proof.
\end{proof}
Now $M$ is a $K(\pi,1)$ and each boundary component is a $K(P_i,1)$ for the 
corresponding peripheral subgroup, in particular $\HH^3(\pi, \{ P_i \}) \simeq 
\HH^3({M};\partial {M}) \simeq \R$ by 
Theorem~\ref{thm:comgpsdeRham}, due to Bieri and Eckmann,  and this leads to our compact definition of the 
volume of a representation (for a more precise statement see 
Definition~\ref{def:defvolplusprecis}):

\begin{definition}\label{def:volrepborl}
 Let $\rho\colon \pi \rightarrow \mathrm{SL}_n(\C)$ be a representation of the fundamental group of a finite volume hyperbolic $3$-manifold. Then, evaluating on our fixed fundamental class $[M,\partial M] \in \HH_3(M,\partial M)$ we set:
\[
 \operatorname{Vol}(\rho) = \langle \rho^\ast(\operatorname{vol}_{\cH,\partial}),[{M},\partial {M}]\rangle .
\]
 \end{definition}

In~\cite{MR3026348} Bucher, Burger and Iozzi prove that the volume of a 
representation  $\pi \rightarrow \mathrm{SL}_n(\C)$ is maximal 
at the composition of the irreducible representation
$\mathrm{SL}_2(\C)\to\mathrm{SL}_n(\C)$
with a lift of the holonomy.
Their definition, as 
ours, relies on continuous bounded cohomology and are clearly equivalent: their 
transfer argument is replaced here by an isomorphism through a relative 
cohomology group. The passage through continuous cohomology seems for the moment 
rather useless, it will however be crucial in our next set: the study of the 
variation of the volume.

\section{Variation of the volume class}\label{sec:varvol}

We  follow Reznikov's idea~\cite{MR1412681} to prove rigidity of the volume 
in the compact case. We will first  show that the volume class can be viewed as 
a characteristic class on the total space of the flat bundle defined by the 
representation, then find explicit relative cocycles representing 
$\operatorname{vol}_{\cH,\partial}$ and finally apply the machinery of 
Section~\ref{sec:relcharvar}.

Let us start with some more notations. In the previous section we defined a series 
of Lie subgroups of $\mathrm{SL}_n(\C)$, we now  pass to their Lie algebras, 
all viewed as real Lie algebras.

\begin{center}
\begin{tabular}{c|c| l}
Lie group & Lie algebra & Description as subgroup \\  \hline 
$\mathrm{SL}_n(\C)$ & $\fsl_n$ & \\
$\mathrm{SU}(n)$ & $\fsu_n$ & Fixed maximal compact subgroup\\
$B$ & $\fb_n$ & Fixed Borel subgroup of upper triangular matrices \\
$D_n$ & $\fh_n+i\fh_n$ & Subgroup of diagonal matrices in $B$\\
$T= D_n\cap \mathrm{SU}(n)$ & $\fh_n$ & Maximal torus in $\mathrm{SU}(n)$ (and in $B$ and $\mathrm{SL}_n(\C)$)\\
$U_n$ & $\fut_n$ & Subgroup of unipotent elements in $B$. \\
& & 
\end{tabular}
\end{center}

For explicit formulas, we will need a concrete basis for the real Lie algebra 
$\fsu_n$. Recall that $\fsu_n = \{ X \in M_n(\mathbb{C}) \ | \ X+ 
{}^t\overline{X} =0 \textrm{ and } \tr(X)=0 \}$.

There is a standard $\mathbb{R}$-basis of $\fsu_2$, orthogonal  with respect to the Killing form:
\[
h = \left(
\begin{matrix}
i/2& 0 \\
0 & -i/2
\end{matrix}
\right)
, \quad 
e = \left(
\begin{matrix}
0 & 1/2 \\
 -1/2 & 0
\end{matrix}
\right)
, \quad
f = \left(
\begin{matrix}
0 &i/2 \\
 i/2 & 0
\end{matrix}
\right).
\]

From this we can construct an analogous basis for $\fsu_n$; we only give here 
the non-zero entries of the matrices.
\begin{enumerate}
\item For an integer $1 \leq s \leq n-1$ let $h_s$ denote the matrix with a 
coefficient $i/2$ in diagonal position $s$ and a coefficient $-i/2$ in diagonal 
position $n$. It will be convenient to denote $h_{st}=h_s-h_t$.
\item For any pair of integers $1 \leq s < t \leq n$ let $e_{st}$  have 
coefficient row $s$ and column $t$ equal to $1/2$ and coefficient row $t$ and 
column $s$ equal to $-1/2$.
\item For any pair of integers $1 \leq s < t \leq n$ let  $f_{st}$ denote the 
matrix which  has coefficient row $s$ and column $t$ equal to $i/2$ and 
coefficient row $t$ and column $s$ equal to $i/2$.
\end{enumerate}

Notice that the matrices $h_s$ generate the Lie subalgebra $\fh$, the Lie 
algebra of the real torus $T$. The dual basis will be denoted by 
$h^\vee_s,e^\vee_{st},h^\vee_{st}$.
With these conventions, for $n=2$,  $h=h_1, e=e_{12}$ and $f=f_{12}$.

Analogously, for $\fb$, the Lie algebra of upper triangular matrices with zero 
trace, we have a basis made of the matrices $h_s$, $ih_s$, $1 \leq s \leq n-1$, 
and for $1 \leq k < l \leq n$, the matrices $ur_{kl}$ (upper real) which are 
equal to $1$ in row $k$ and column $l$ and $ui_{kl} = i ur_{kl}$ (upper 
imaginary matrices). We have $ur_{kl}= e_{kl}-i\, f_{kl}$ and 
$ui_{kl}=i\, e_{kl}+f_{kl}$.

The following  result provides us with the right cochain complex in which to 
find our cocycle representatives; beware that the relative cohomology of Lie 
algebras in the statement is not the one we defined in 
Section~\ref{sec:relcoho}, but the classical one as defined for instance in 
Weibel~\cite[Chap. 7]{MR1269324}.
 
\begin{proposition}[\cite{HochMostow} van Est isomorphism for trivial coefficients]
Let $G$ be a connected real Lie group. Denote by $\mathfrak{g}$ its Lie algebra and $\mathfrak{k}$ the Lie algebra of a maximal compact subgroup $K \subset G$. Then for all $m$ there is a canonical isomorphism $\HH^m_c(G;\R) \simeq \HH^m(\mathfrak{g},\mathfrak{k};\R)$.  More precisely the de Rham cochain complex of left-invariant differential forms
\[
 \xymatrix{
 0 \ar[r] & \Omega^0_{dR}(G/K)^G \ar[r] & \cdots  \ar[r] & \Omega^n_{dR}(G/K)^G \ar[r] & \cdots
 }
\]
computes both cohomologies.
\end{proposition}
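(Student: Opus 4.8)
The plan is to establish the van Est isomorphism by exhibiting a common cochain complex computing both sides. The starting point is the observation that the complex of $G$-invariant differential forms $\Omega^\ast_{dR}(G/K)^G$ is canonically isomorphic, as a cochain complex, to the relative Chevalley-Eilenberg complex $\operatorname{Hom}_{\mathfrak{k}\text{-}mod}(\bigwedge^\ast \mathfrak{g}/\mathfrak{k}, \R)$. This is a standard piece of differential-geometric bookkeeping: a $G$-invariant $m$-form on $G/K$ is determined by its value at the basepoint $eK$, which is a $\operatorname{Ad}(K)$-invariant alternating form on the tangent space $T_{eK}(G/K) \cong \mathfrak{g}/\mathfrak{k}$; and one checks directly that the de Rham differential on invariant forms translates into the Chevalley-Eilenberg differential, with the $K$-invariance matching the $\mathfrak{k}$-module condition. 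Under this identification, the cohomology of $\Omega^\ast_{dR}(G/K)^G$ is by definition $\HH^m(\mathfrak{g},\mathfrak{k};\R)$, which handles the right-hand isomorphism.

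The main work is to show that this same invariant-forms complex computes the continuous cohomology $\HH^m_c(G;\R)$. First I would invoke the fact, recorded in the examples of Section~\ref{sec:relcoho}, that $\HH^\ast_c(G;\R)$ can be computed either by the continuous (or smooth) normalized bar resolution $C^\ast_c(G;\R)$, or by the van Est double complex of $G$-invariant smooth forms on the homogeneous space. The cleanest route is van Est's original argument: one considers the double complex $\Omega^p_{dR}(G/K) \otimes C^q$ of smooth forms coupled with the bar construction, or equivalently the bicomplex built from smooth differential forms on the simplicial manifold $G^{\bullet} \times G/K$. One spectral sequence collapses because $G/K$ is contractible (it is diffeomorphic to a Euclidean space, being the symmetric space of a connected Lie group with maximal compact $K$), yielding the smooth bar complex and hence $\HH^\ast_\infty(G;\R)$; the other collapses, using that $G/K$ is $K$-equivariantly contractible to the basepoint and averaging over the compact group $K$, onto the complex of $G$-invariant forms $\Omega^\ast_{dR}(G/K)^G$. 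Finally the Hochschild-Mostow comparison $C^\ast_\infty(G;\R) \to C^\ast_c(G;\R)$, already cited in the examples, identifies smooth with continuous cohomology, so both spectral-sequence edges compute $\HH^m_c(G;\R)$.

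The key technical device throughout is \emph{averaging over the compact group} $K$, which is what allows one to pass from all smooth invariant forms to $G$-invariant ones without changing cohomology; since $K$ is compact this averaging is a well-defined chain homotopy and does not require $G$ to be semisimple. I would note that connectedness of $G$ guarantees $G/K$ is contractible (via the Cartan decomposition $G \cong K \times \R^d$), which is the geometric input making one edge of the spectral sequence degenerate.

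The step I expect to be the main obstacle is the precise verification that the de Rham differential on $\Omega^\ast_{dR}(G/K)^G$ coincides, under the basepoint-evaluation identification, with the Chevalley-Eilenberg coboundary on $\operatorname{Hom}_{\mathfrak{k}\text{-}mod}(\bigwedge^\ast \mathfrak{g}/\mathfrak{k}, \R)$, together with the matching of the degeneration of the van Est bicomplex to this same complex. These are classical but sign-and-convention-sensitive computations involving the structure constants of $\mathfrak{g}$ and the action of $K$; rather than grind through them I would cite Hochschild-Mostow and the van Est theorem directly, as the proposition statement already attributes the result to \cite{HochMostow}, and present the invariant-forms complex as the bridge realizing both cohomologies simultaneously.
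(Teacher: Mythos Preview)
The paper gives no proof of this proposition: it is stated with attribution to Hochschild--Mostow \cite{HochMostow} and immediately used. Your sketch is a correct outline of the standard van Est argument, and your final paragraph already anticipates exactly what the paper does, namely cite the result rather than reprove it.
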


Functoriality of the cone construction allows to extend van Est isomorphism to relative cohomology as follows. Fix a connected Lie group $G$ and a family of connected closed subgroups $\{B_i\}$. Pick for each index $i$ a maximal compact subgroup $K_i \subset B_i$ and fix a maximal compact subgroup $K \subset G$. Then, by maximality, for each index $i$ there is an element $g_i \in G$ such that $K_i \subset g_i Kg_i^{-1}$. Then the composite
\[
j_{g_i} : B_i/K_i \longrightarrow G/g_iKg_{i}^{-1} \stackrel{c_{g_i}}{\longrightarrow} G/K,
\]
where the second map is induced by conjugation by $g_i$, induces a cochain map:
\[
\Omega_{dR}^\ast(G/K)^G \rightarrow \Omega_{dR}^\ast(B_i/K_i)^{B_i}
\]
which in, lets say, continuous cohomology is the map induced by the inclusion $B_i \rightarrow G$. Indeed it is clear for the first map using van Est isomorphism with the maximal subgroups $g_iKg_i^{-1}$ in $G$ and $K_i$ in $B_i$, and as for the second map, by construction it induces in cohomology the map that is induced by conjugation by $g_i$ and this is well-known to be the identity.
Let us denote the first composite by $j_{g_i}: B_i/K_i \longrightarrow G/K$.
Denote respectively by $\mathfrak{g}$, $\mathfrak{k}$, $\mathfrak{b}_i$, $\mathfrak{k}_i$ the Lie algebras of $G$, $K$, $B_i$, $K_i$. The an immediate application of the five lemma and van Est isomorphism gives us:

\begin{corollary}[relative van Est isomorphism]\label{cor:relvanEst}
With the above 	notations and conventions the cone on the map
\[
\xymatrix{
\Omega_{dR}^G(G/K) \ar[rr]^-{\Pi j_{g_i}^\ast}& &  \Pi \Omega(B_i/K_i)^{B_i}
}
\]
computes both the relative continuous cohomology groups $\HH^\ast_c(G,\{B_i\};\mathbb{R})$ and the unaesthetic relative Lie cohomology  groups $\HH^\ast(\mathfrak{g},\mathfrak{k}, \{\mathfrak{b}_i,\mathfrak{k}_i\};\mathbb{R})$. In particular both these relative cohomology groups are canonically isomorphic.
\end{corollary}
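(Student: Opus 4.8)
The plan is to reduce the statement to two applications of the (absolute) van Est isomorphism together with the functoriality of the cone construction, then close the argument with the five lemma. The key observation, already set up in the excerpt, is that both relative theories in question are by definition cones: the relative continuous cohomology $\HH^\ast_c(G,\{B_i\};\R)$ is computed by the cone on $C^\ast_c(G;\R)\to\prod_i C^\ast_c(B_i;\R)$, while the ``unaesthetic'' relative Lie algebra cohomology $\HH^\ast(\fg,\fk,\{\fb_i,\fk_i\};\R)$ is the cohomology of the cone on $\Omega^\ast_{dR}(G/K)^G\to\prod_i \Omega^\ast_{dR}(B_i/K_i)^{B_i}$, the latter being precisely the cone displayed in the statement via the maps $\Pi\, j_{g_i}^\ast$.

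First I would record that the absolute van Est isomorphism identifies the two vertices of each cone: it gives $\HH^\ast_c(G;\R)\simeq\HH^\ast(\fg,\fk;\R)$ on the source side, and applied to each $B_i$ with its maximal compact $K_i$ it gives $\HH^\ast_c(B_i;\R)\simeq \HH^\ast(\fb_i,\fk_i;\R)$; since cohomology commutes with the products indexed by $i$ (point (2) in Section~\ref{sec:relcoho}), this yields $\prod_i\HH^\ast_c(B_i;\R)\simeq\prod_i\HH^\ast(\fb_i,\fk_i;\R)$, computed by $\prod_i\Omega^\ast_{dR}(B_i/K_i)^{B_i}$. The crucial compatibility is the one established in the paragraph preceding the corollary: the map $j_{g_i}^\ast$ on invariant de Rham complexes induces, in cohomology, exactly the restriction map $\HH^\ast_c(G)\to\HH^\ast_c(B_i)$ coming from the inclusion $B_i\hookrightarrow G$. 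Here one uses van Est naturality for the pair $(G,g_iKg_i^{-1})$ for the first factor $j_{g_i}$, and the standard fact that conjugation by $g_i$ acts as the identity on $\HH^\ast_c(G)$ for the second factor $c_{g_i}$.

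Next I would assemble these into a single commuting ladder of long exact sequences. Both cones produce, by the general machinery of Section~\ref{sec:relcoho}, long exact sequences of the form
\[
\cdots\to \prod_i\HH^{n-1}(B_i)\to \HH^n(G,\{B_i\})\to \HH^n(G)\to\prod_i\HH^n(B_i)\to\cdots
\]
in both the continuous and the relative-Lie-algebra settings, and the absolute van Est isomorphisms furnish vertical isomorphisms on every term except the middle relative one. To be sure the ladder commutes, I would invoke the functoriality of the cone construction recorded in Section~\ref{sec:relcoho}: the pair of vertical maps $(\HH^\ast_c(G)\xrightarrow{\sim}\HH^\ast(\fg,\fk),\ \prod_i\HH^\ast_c(B_i)\xrightarrow{\sim}\prod_i\HH^\ast(\fb_i,\fk_i))$ is compatible with the two horizontal restriction maps precisely because of the $j_{g_i}^\ast$-compatibility just discussed, hence induces $\HH^\ast(\operatorname{Cone})$ making the whole diagram commute.

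Finally, the five lemma applied to this ladder forces the induced map on the middle (relative) terms to be an isomorphism, giving $\HH^\ast_c(G,\{B_i\};\R)\simeq\HH^\ast(\fg,\fk,\{\fb_i,\fk_i\};\R)$ and establishing that the displayed cone computes both. The main obstacle I anticipate is not the homological bookkeeping but the verification that the two horizontal arrows genuinely agree under the van Est identifications---that is, pinning down that $\Pi\, j_{g_i}^\ast$ realizes the restriction induced by $B_i\hookrightarrow G$, independently of the auxiliary choices of $g_i$ and of the maximal compacts $K_i\subset g_iKg_i^{-1}$. This is exactly the content of the paragraph before the corollary, so in the write-up I would lean on that naturality statement and the conjugation-acts-trivially fact rather than re-deriving them, keeping the proof to the five-lemma assembly.
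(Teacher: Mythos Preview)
Your proposal is correct and follows exactly the approach the paper takes: invoke the absolute van Est isomorphism on both $G$ and each $B_i$, use the compatibility of $j_{g_i}^\ast$ with restriction established just before the corollary, and conclude by the five lemma applied to the ladder of long exact sequences of the two cones. The paper itself phrases this as ``an immediate application of the five lemma and van Est isomorphism,'' so your write-up is if anything more detailed than what the paper provides.
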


Recall that the volume class comes from a bounded cohomology class, so its de Rham representative will be rather special and can be explicitly detected  thanks to the following result of Burger and Iozzi~(Prop. 3.1 in \cite{MR2322535})

\begin{proposition}\cite{MR2322535}\label{prop:boundeddeRham}
Let $G$ be a connected semi-simple Lie group with finite center, let $K$ be a maximal compact subgroup, let  $G/K$ the associated symmetric space and let $L \subset G$ be any closed subgroup. Then there exists a map
\[
\delta_{\infty,L}^\ast: \HH^\ast_{cb}(L;\mathbb{R}) \rightarrow \HH^\ast(\Omega_{dR, \infty}(G/K)^L)
\]  
such that the diagram:
\[
\xymatrix{
 \HH^\ast_{cb}(L;\mathbb{R}) \ar[r]^{c^\ast_L} \ar[dr]_{\delta_{\infty,L}} & \HH^\ast_c(L;\mathbb{R}) & \HH^\ast(\Omega_{dR}(G/K)^L) \ar_{\sim}[l] \\
 & \HH^\ast(\Omega_{dR,\infty}(G/K)^L) \ar[ur]_{i_{\infty,L}} & 
}
\]
commutes, where $\Omega_{dR,\infty}(G/K)$ is the de Rham complex of bounded differential forms with bounded differential and  $i_{\infty,L} $ is the map induced in cohomology by the inclusion of complexes $\Omega_{dR,\infty}(G/K) \hookrightarrow \Omega_{dR}(G/K)$.
\end{proposition}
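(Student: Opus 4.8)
The plan is to construct $\delta_{\infty,L}$ as the \emph{bounded} incarnation of the van Est comparison map, realized on the Dupont double complex built from the symmetric space $X := G/K$, and to read off commutativity as a cochain-level naturality statement. Write $A^{p,q} = \Omega^p_{dR}(X^{q+1})^L$ for the bicomplex of $L$-invariant smooth $p$-forms on the $(q+1)$-fold product, with vertical differential the de Rham differential $d$ and horizontal differential the simplicial coboundary $\delta = \sum_{i=0}^{q+1}(-1)^i \partial_i^\ast$ induced by the face maps $\partial_i \colon X^{q+2}\to X^{q+1}$ that omit the $i$-th factor. Since each $X^{q+1}$ is diffeomorphic to a Euclidean space, the Poincar\'e lemma makes the columns exact over the bottom row $A^{0,\bullet} = C^\infty(X^{\bullet+1})^L$, which (as $K$ is compact, so $X$ furnishes a relatively injective resolution) computes $\HH^\ast_c(L;\R)$; dually the rows are exact over the left column $A^{\bullet,0} = \Omega^\bullet_{dR}(G/K)^L$, which computes $\HH^\ast(\Omega_{dR}(G/K)^L)$. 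The van Est isomorphism of the statement is precisely the comparison of these two edges through the total complex, implemented by an explicit zig-zag using the contracting homotopies in the two directions.

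First I would recall that bounded cohomology is \emph{not} seen on the bottom edge $C_b(X^{\bullet+1})^L$ (the $L$-space $X$ is not amenable), so some genuine input is needed. I would instead compute $\HH^\ast_{cb}(L;\R)$ through an $L$-amenable resolution, e.g.\ the complex $L^\infty(\mathcal{B}^{\bullet+1})^L$ of essentially bounded $L$-invariant functions on powers of the Furstenberg boundary $\mathcal{B}=G/P$ (Burger--Monod), and record that the comparison map $c^\ast_L$ is the canonical map from this resolution into a continuous one. The heart of the construction is then a $G$-equivariant \emph{straightening transform} $\mathcal{P}$ from bounded boundary cochains to bounded invariant forms on $X$: to an $L$-invariant bounded function $f$ on $\mathcal{B}^{q+1}$ one associates the $L$-invariant $q$-form on $X$ whose value at a point is obtained by integrating $f$ against the visual/Poisson data of the geodesic simplices with ideal vertices on $\mathcal{B}$ subtended at that point. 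One checks that $\mathcal{P}$ intertwines $\delta$ with $d$ up to the cone homotopy, hence descends to a chain map landing in $\Omega^\bullet_{dR,\infty}(G/K)^L$; I would then \emph{define} $\delta_{\infty,L}$ as the induced map on cohomology.

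The main obstacle, and the only place where real analysis enters, is to show that $\mathcal{P}(f)$ and $d\,\mathcal{P}(f)$ are genuinely bounded, uniformly over all of $X$. This is where the geometry of $X$ as a symmetric space of noncompact type is decisive: $X$ is $\mathrm{CAT}(0)$ with bounded geometry, and the geodesic simplices with vertices on the boundary $\mathcal{B}$ have \emph{uniformly bounded volume} and uniformly controlled shape (the model fact being the bounded volume of ideal tetrahedra in $\cH^3$ used for $\operatorname{vol}_{\cH^3}$). Combined with $\|f\|_\infty<\infty$ and the homogeneity of $X$, this yields a uniform sup-norm bound on the straightened form and, after one more such estimate on its exterior derivative, on $d\,\mathcal{P}(f)$ as well; this is exactly the step that forces the target to be $\Omega_{dR,\infty}(G/K)^L$ rather than the full de Rham complex.

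Finally, the two triangles follow by naturality. The inclusion $\Omega_{dR,\infty}(G/K)^L \hookrightarrow \Omega_{dR}(G/K)^L$ is a map of complexes commuting with $d$, so $i_{\infty,L}\circ \delta_{\infty,L}$ is computed by carrying out the \emph{same} transform inside the full bicomplex $A^{\bullet,\bullet}$; there the straightening is homotopic, through the total complex, to the edge inclusion that defines the van Est isomorphism, while on the boundary/continuous side the comparison map $c^\ast_L$ is nothing but the forgetful inclusion of bounded cochains into continuous ones. Tracking these identifications shows that the van Est isomorphism $\HH^\ast(\Omega_{dR}(G/K)^L)\xrightarrow{\sim}\HH^\ast_c(L;\R)$ carries $i_{\infty,L}\circ\delta_{\infty,L}$ to $c^\ast_L$, which is the asserted commutativity. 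I expect the verification of commutativity to be routine once the transform is set up, with all the difficulty concentrated in the uniform boundedness estimate above.
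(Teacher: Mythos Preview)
The paper does not prove this proposition; it is cited as Proposition~3.1 of Burger--Iozzi~\cite{MR2322535} and used as a black box, so there is no in-paper argument to compare against. Your architecture (Dupont bicomplex, boundary resolution for $\HH^\ast_{cb}$, a transform into invariant forms, and naturality for the commuting triangle) is the right shape and is in fact close to what Burger--Iozzi actually do.

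However, the step you single out as the ``main obstacle'' fails as written. You claim that geodesic simplices with ideal vertices on $\mathcal{B}=G/P$ have uniformly bounded volume in a general symmetric space $G/K$, citing the $\cH^3$ case as the model. That is a rank-one phenomenon. For $G=\mathrm{SL}_n(\C)$ with $n\geq 3$ the real rank is $n-1\geq 2$, the symmetric space contains isometrically embedded flats $\R^{n-1}$, and ideal geodesic simplices whose vertices go to infinity inside a common flat have infinite volume; moreover in higher rank the Furstenberg boundary $G/P$ is not the visual boundary, so ``geodesic simplex with vertices on $\mathcal{B}$'' already requires interpretation. Burger--Iozzi do not use a volume bound at all. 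They apply the Poisson transform to the boundary cochain $f\in L^\infty((G/P)^{q+1})^L$ to obtain a bounded \emph{harmonic} function on $X^{q+1}$, and the passage to a $q$-form is by differentiation along the diagonal (the explicit van Est map). The controlling analytic input is then an elliptic/Harnack estimate: a bounded harmonic function on a homogeneous space of bounded geometry has all covariant derivatives uniformly bounded by a constant times its sup norm, so the resulting form and its exterior derivative land in $\Omega^\ast_{dR,\infty}(G/K)^L$. If you replace your geodesic-straightening volume estimate by this harmonic-function derivative estimate, the rest of your outline survives.
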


\subsection{A relative cocycle representing $\operatorname{vol}_{\cH,\partial }$}\label{subsec:relcoc}
We will apply the relative van Est isomorphism in the particular case where $G= SL_n(\C)$, $K= SU(n)$ and $\{B\}$ is the family of all Borel subgroups and in cohomological degree $3$. Here the situation is simpler, as for any Borel subgroup $B \cap SU(n)$ is a maximal torus and in our case this is also a maximal compact subgroup of $B$ so the "conjugation" part of the statement can be avoided and simply use as maximal compact subgroup of $B$ the intersection $B \cap SU(n)$.

In particular, to represent the class $\operatorname{vol}_{\cH,\partial }$,   we look for a  relative cocycle whose  absolute part lies in $\Omega_{dR}^3(\mathrm{SL}_n(\mathbb{C})/\mathrm{SU}(n))^{SL_{n}(\C)}$ and whose  relative part lies in the groups $\Omega^2_{dR}(B/(B \cap SU(n)))^B$.

We take now advantage of the fact that  all pairs $(B,T)$ where $B$ is a Borel subgroup and $T$ a maximal torus in $B$  are conjugated in $SL_n(\C)$, so in fact we only need to determine the relative part  for  our standard Borel $B$ of upper triangular matrices; if $\beta$ is a relative part for this particular subgroup and $B'$ is another Borel, there exists an element $g \in SL_n(\mathbb{C})$ that conjugates $(B,T) $ and $(B',SU(n)\cap B')$, then conjugation by $g$ induces a homeomorphism $c_g: B/T \rightarrow B'/(B' \cap SU(n))$, hence the relative part for $B'$ is given by $c_{g^{-1}}^\ast(\beta)$.

\subsubsection{The absolute part}\label{subsub:absolpart}

Denote by $K^\mathbb{R}_{\fsl_n}$ the real Killing form of the real Lie algebra $\fsl_n$. With respect to this form we have an orthogonal decomposition $\fsl_n = \fsu_n \oplus i\fsu_n$. We denote by:
\[
\begin{array}{rcl}
pr_{\fsu_n}  : \fsl_n & \longrightarrow & \fsu_n \\
A & \longmapsto & \frac{1}{2}(A - {}^t\overline{A})
\end{array}
\quad \textrm{ and } \quad 
\begin{array}{rcl}
pr_{i\fsu_n}  : \fsl_n & \longrightarrow & i\fsu_n \\
A & \longmapsto & \frac{1}{2}(A + {}^t\overline{A})
\end{array}
\]
the canonical projections.

The behavior of these projections with respect to the Lie bracket is given by:
\begin{equation}\label{eq:brackreel}
pr_{\fsu}([a,b]) =  [pr_{\fsu}a,pr_{\fsu}b] + [pr_{i\fsu}a,pr_{i\fsu}b],
\end{equation}
\begin{equation}\label{eq:brackim}
pr_{i\fsu}([a,b])  =  [pr_{\fsu}a,pr_{i\fsu}b] + [pr_{i\fsu}a,pr_{\fsu}b]. 
\end{equation}
The tangent space at the class of $\operatorname{Id}$ of the symmetric space 
$\mathrm{SL}_n(\mathbb{C})/\mathrm{SU}(n)$ is canonically identified with 
$i\fsu_n$, and the induced action of $\mathrm{SU}(n)$ on this tangent space is 
easily checked to be the adjoint action. Let us now consider the following 
rescaling of the  \emph{complex} Killing form on $\fsl_n$, $A,B \leadsto 
\tr(AB)$. This gives rise to a complex valued alternating $3$ form, sometimes 
known as the (here rescaled) Cartan-Killing form: 
$CK^\mathbb{C}_{\fsl_n} \colon (A,B,C) \mapsto \tr(A[B,C])$. It is folklore knowledge 
that ``the hyperbolic volume  is the imaginary part of this Cartan-Killing form" 
(see Yoshida~\cite{Yoshida} for a precise statement when  $n=2$ or 
Reznikov~\cite{MR1412681}); let us turn this into a precise statement.
We fix our attention in the following part of the  de Rham complex:

\[
 \Omega_{dR}^2(\mathrm{SL}_n(\mathbb{C})/\mathrm{SU}(n))^{ \mathrm{SL}_n(\mathbb{C})} \rightarrow \Omega_{dR}^3(\mathrm{SL}_n(\mathbb{C})/\mathrm{SU}(n))^{ \mathrm{SL}_n(\mathbb{C})} \rightarrow \Omega_{dR}^4(\mathrm{SL}_n(\mathbb{C})/\mathrm{SU}(n))^{ \mathrm{SL}_n(\mathbb{C})}.
\]
\begin{lemma}\label{lem:2formesslninvtriv}
The vector space $\Omega^2_{dR}(\mathrm{SL}_n(\mathbb{C})/\mathrm{SU}(n))^{ \mathrm{SL}_n(\mathbb{C})}$ is trivial.
\end{lemma}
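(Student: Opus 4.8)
The plan is to translate the statement into pure representation theory and then apply Schur's lemma. A $\mathrm{SL}_n(\C)$-invariant differential form on the symmetric space $\mathrm{SL}_n(\C)/\mathrm{SU}(n)$ is determined by its value at the base point, and $\mathrm{SL}_n(\C)$-invariance forces this value to be invariant under the isotropy representation of $\mathrm{SU}(n)$; conversely any such invariant element extends uniquely. As recalled just above, the tangent space at the base point is canonically $i\fsu_n$ with $\mathrm{SU}(n)$ acting by the adjoint action, and multiplication by $-i$ is a real isomorphism $i\fsu_n \to \fsu_n$ intertwining the two adjoint actions. So the first step is the identification
\[
\Omega^2_{dR}(\mathrm{SL}_n(\C)/\mathrm{SU}(n))^{\mathrm{SL}_n(\C)} \cong \Big(\textstyle\bigwedge^2 \fsu_n^\vee\Big)^{\mathrm{SU}(n)},
\]
the space of $\mathrm{Ad}$-invariant alternating $2$-forms on $\fsu_n$.

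Next I would fix a nondegenerate $\mathrm{Ad}$-invariant symmetric bilinear form $\kappa$ on $\fsu_n$ (for instance its Killing form, which is negative definite) and use it to encode an invariant alternating form $\omega$ as an endomorphism $J\colon \fsu_n \to \fsu_n$ via $\omega(X,Y)=\kappa(JX,Y)$. The alternating property of $\omega$ together with the symmetry of $\kappa$ makes $J$ skew-adjoint for $\kappa$, while the $\mathrm{Ad}$-invariance of $\omega$ and of $\kappa$ makes $J$ commute with the adjoint action, i.e.\ $J\in \operatorname{End}_{\mathrm{SU}(n)}(\fsu_n)$. The crux is then to compute this commutant: since $\fsu_n\otimes_\R\C \cong \fsl_n(\C)$ is the adjoint representation of the complex simple group, which is irreducible (highest-root representation), $\fsu_n$ is an irreducible real representation of real type, so by Schur's lemma $\operatorname{End}_{\mathrm{SU}(n)}(\fsu_n)=\R$. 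Every scalar endomorphism is self-adjoint, hence the only skew-adjoint invariant endomorphism is $0$; therefore $\omega=0$ and the space is trivial.

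I expect the only point needing care to be the \emph{real type} claim, namely that the commutant is exactly $\R$ and not a larger division algebra: this is precisely where irreducibility of the complexified adjoint representation of $\mathrm{SU}(n)$ is used, and it is what forces the invariant skew-adjoint endomorphism to vanish. As an independent cross-check one can argue cohomologically: on a symmetric space $[\mathfrak p,\mathfrak p]\subseteq \mathfrak k$ makes the Chevalley--Eilenberg differential vanish on invariant forms, so the invariant de Rham complex has zero differential and the van Est isomorphism gives $\Omega^2_{dR}(\mathrm{SL}_n(\C)/\mathrm{SU}(n))^{\mathrm{SL}_n(\C)}\cong \HH^2_c(\mathrm{SL}_n(\C))$, which vanishes by Proposition~\ref{prop:cohoconSL} since the lowest-degree Borel class sits in degree $3$.
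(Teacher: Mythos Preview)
Your proof is correct, and both of your arguments work, but neither is the route the paper takes. After the common reduction to showing that $(\bigwedge^2\fsu_n^\vee)^{\mathrm{SU}(n)}=0$, the paper proceeds by a direct elementary manipulation: writing the infinitesimal invariance condition $\phi([X,Y],Z)+\phi(Y,[X,Z])=0$ and combining it with skew-symmetry of $\phi$ and of the bracket, one finds that $\phi([X,Y],Z)$ changes sign under each cyclic permutation of $(X,Y,Z)$, hence vanishes; simplicity of $\fsu_n$ (so $[\fsu_n,\fsu_n]=\fsu_n$) then forces $\phi=0$. Your first argument instead packages the same data as an $\mathrm{Ad}$-equivariant skew-adjoint endomorphism and invokes Schur's lemma together with the real type of the adjoint representation; this is slightly more conceptual but uses more machinery (and the real-type verification is exactly the step you flag). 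Your second argument, via the vanishing of the relative Chevalley--Eilenberg differential on a symmetric pair and the van~Est isomorphism reducing to $\HH^2_c(\mathrm{SL}_n(\C))=0$, is a clean high-level alternative; in fact the paper uses the same zero-differential observation later (Lemma~\ref{lem:omegaisclosed}) and proves Corollary~\ref{cor:3exactarecohomology} precisely by combining the present lemma with that observation, so your shortcut anticipates that logic. The paper's hands-on approach has the virtue of being completely self-contained, needing nothing beyond simplicity of $\fsu_n$.
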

\begin{proof}
By transitivity of the action,  an alternating $2$-form on the homogeneous space $\mathrm{SL}_n(\mathbb{C})/\mathrm{SU}(n)$ is completely determined by what happens at the class of the identity, i.e. by a unique element in $ (\bigwedge^2(i\fsu_n)^\vee)^{\mathrm{SU}(n)}$. As $\mathrm{SU}(n)$-modules $i\fsu^\vee$ and $\fsu^\vee$ are isomorphic, and via the real Killing form on $\mathrm{SU}(n)$, a symmetric non-degenerate form, the Lie algebra $\fsu$ and its dual are also isomorphic $\mathrm{SU}(n)$-modules. So to prove the statement it is enough to show that $(\bigwedge^2 \fsu_n)^{\mathrm{SU}(n)} = 0$. Let $\phi\colon \mathfrak{su}(n)\wedge \mathfrak{su}(n)\to \mathbb R$ be a skew-symmetric invariant form. Invariance by the adjoint action 
of $\mathrm{SU}(n)$ is equivalent to:
\[
\phi([X,Y],Z)+ \phi(Y, [X,Z])=0 \qquad \forall X,Y,Z\in  \mathfrak{su}(n)\, .
\]
Combined with skew-symmetry of  both $\phi$ and  the Lie bracket, this equality yields
\[
\phi([X,Y],Z)= \phi( [X,Z],Y)= - \phi( [Z,X],Y)  \qquad \forall X,Y,Z\in  \mathfrak{su}(n)\, .
\]
Namely,  $\phi([X,Y],Z) $ changes the sign when the entries $X,Y,Z\in  \mathfrak{su}(n)$ are cyclically permuted, therefore it vanishes.
Then $\phi=0$ because $\mathfrak{su}(n)$ is simple.
\end{proof}

For a manifold $X$, denote by $Z_{dR}^n(X) \subset \Omega_{dR}^n(X)$ the subspace of closed forms.
\begin{corollary}\label{cor:3exactarecohomology}
The canonical quotient map $Z^3_{dR}(\mathrm{SL}_n(\C)/\mathrm{SU}(n))^{ \mathrm{SL}_n(\C)} \rightarrow \HH^3_c(\mathrm{SL}_n(\C)) \simeq \R$ is an isomorphism.
\end{corollary}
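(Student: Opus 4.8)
The plan is to read the statement off directly from the van Est description of continuous cohomology, combined with the vanishing established just above in Lemma~\ref{lem:2formesslninvtriv}. By the van Est isomorphism for trivial coefficients stated above, the continuous cohomology $\HH^3_c(\mathrm{SL}_n(\C))$ is computed by the complex $\Omega^\ast_{dR}(\mathrm{SL}_n(\C)/\mathrm{SU}(n))^{\mathrm{SL}_n(\C)}$ of invariant differential forms on the symmetric space. Hence, by the very definition of the cohomology of a cochain complex,
\[
\HH^3_c(\mathrm{SL}_n(\C)) \;\simeq\; Z^3_{dR}(\mathrm{SL}_n(\C)/\mathrm{SU}(n))^{\mathrm{SL}_n(\C)} \big/ \, d\,\Omega^2_{dR}(\mathrm{SL}_n(\C)/\mathrm{SU}(n))^{\mathrm{SL}_n(\C)},
\]
and the canonical quotient map in the statement is precisely the projection of the space of closed invariant $3$-forms onto this quotient under the van Est identification.

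First I would observe that the only coboundaries appearing in degree $3$ are the differentials of invariant $2$-forms. But Lemma~\ref{lem:2formesslninvtriv} asserts that $\Omega^2_{dR}(\mathrm{SL}_n(\C)/\mathrm{SU}(n))^{\mathrm{SL}_n(\C)}=0$, so this space of coboundaries is trivial. Consequently the projection has trivial kernel, and the quotient map from $Z^3_{dR}(\mathrm{SL}_n(\C)/\mathrm{SU}(n))^{\mathrm{SL}_n(\C)}$ to $\HH^3_c(\mathrm{SL}_n(\C))$ is injective; being a quotient map it is automatically surjective, hence an isomorphism of real vector spaces.

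Finally I would identify the target with $\R$: by Proposition~\ref{prop:cohoconSL} the continuous cohomology $\HH^\ast_c(\mathrm{SL}_n(\C))$ is an exterior algebra on Borel classes of odd degree $2j+1$, of which exactly one, the class $x_{n,1}$, lies in degree $3$ (here we use the standing assumption $n\geq 2$); therefore $\HH^3_c(\mathrm{SL}_n(\C)) \simeq \R$. Composing the two isomorphisms yields the claim.

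There is essentially no obstacle here: the whole content of the corollary is already packaged in the vanishing of invariant $2$-forms proved in the preceding lemma, together with the structural computation of $\HH^\ast_c$. The only point that requires (minimal) care is the bookkeeping needed to check that the \emph{canonical quotient map} of the statement is indeed the passage from closed forms to their cohomology classes under the van Est identification, and not some other normalization; once this is settled the argument is immediate.
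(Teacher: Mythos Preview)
Your proof is correct and is exactly the argument the paper has in mind: the corollary is stated without proof because it follows immediately from the vanishing in Lemma~\ref{lem:2formesslninvtriv} together with the van Est isomorphism and Borel's computation of $\HH^\ast_c(\mathrm{SL}_n(\C))$ in Proposition~\ref{prop:cohoconSL}. You have simply spelled out the implicit steps.
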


Since $\HH^3_c(\mathrm{SL}_n(\C)) \simeq \R$ by Borel's computations, there is a 
unique closed form on $\mathrm{SL}_n(\C)/\mathrm{SU}(n)$ that represents the 
class $\operatorname{vol}_{\cH}$.   There is an obvious candidate for such a form, it is given 
on the tangent space at $\operatorname{Id}$ by:
\begin{equation*}
\begin{array}{rcl}
 \bigwedge^3 i\fsu_n  & \longrightarrow  & \mathbb{R} \\
(A,B,C) & \longmapsto & 2i\operatorname{tr}(A[B,C])= -2\Im \operatorname{tr}(A[B,C]).
\end{array} 
\end{equation*}
Then $\varpi_n\colon \fsl_n\to\mathbb{R}$ is the composition of the projection 
$pr_{i\fsu}\colon \fsl_n\to i\fsu_n$ with this form:
\begin{equation*}
\begin{array}{rcl}
\varpi\colon \bigwedge^3 \fsl_n \quad & \longrightarrow  & \mathbb{R} \\
(A,B,C) & \longmapsto & 2i\operatorname{tr}(  pr_{i\fsu}(A)[pr_{i\fsu}(B),pr_{i\fsu}(C)]).
\end{array} 
\end{equation*}
That this form is alternating and invariant under the adjoint action of 
$\mathrm{SU}(n)$ is an immediate consequence of the fact that 
the Cartan-Killing form $
(A,B,C)\mapsto \tr(A[B,C])=\tr ( ABC-ACB)
$ is alternating and $\mathrm{SU}(n)$-invariant, 
and that the adjoint action of $\mathrm{SU}(n)$ respects the 
decomposition of $\fsl_n = \fsu_n \oplus i\fsu_n$. Observe that by construction 
this form is compatible with the inclusions $\fsl_n \rightarrow \fsl_{n+1}$: if 
we denote the form defined by $\fsl_n$ by $\varpi_n$ then 
$\varpi_{n+1}|_{\fsl_n}=\varpi_n$, in line of the stability result of Borel in 
degree $3$. We only have to check that this is a  cocycle when viewed as a  
classical relative cocycle in Lie algebra cohomology of $\fsl_n/\fsu_n =i\fsu_n$ 
(i.e. gives rise to a closed form), that it is not trivial and fix the 
normalization constant; this will done by comparing it with the hyperbolic 
volume form for $n=2$.

\begin{lemma}\label{lem:omegaisclosed}
The alternating $3$-form $\varpi \in \operatorname{Hom}(\bigwedge^3 \fsl_n,\R)$ is a cocycle.
\end{lemma}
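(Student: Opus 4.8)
The plan is to verify directly that $\varpi$ is a cocycle in the Chevalley--Eilenberg complex, i.e. that its Chevalley--Eilenberg coboundary $d\varpi$ vanishes. For a $3$-form on a Lie algebra with trivial coefficients, the coboundary is the alternating $4$-form
\[
d\varpi(A,B,C,D) = -\sum_{i<j} (-1)^{i+j}\, \varpi([X_i,X_j], X_1,\dots,\widehat{X_i},\dots,\widehat{X_j},\dots,X_4),
\]
where $(X_1,X_2,X_3,X_4)=(A,B,C,D)$. First I would recall that $\varpi$ is built from the projection $pr_{i\fsu}$ followed by the genuinely invariant Cartan--Killing $3$-form $(A,B,C)\mapsto \tr(A[B,C])$ on $i\fsu_n$. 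Since the Cartan--Killing $3$-form is the coboundary-free generator attached to the bi-invariant form on the \emph{complex} Lie algebra, the cleanest strategy is to reduce the closedness of $\varpi$ to an algebraic identity relating $pr_{i\fsu}$, $pr_{\fsu}$, and the bracket, namely the two formulas \eqref{eq:brackreel} and \eqref{eq:brackim} already established in the excerpt.

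The key computational step is to expand $d\varpi(A,B,C,D)$ using the definition of $\varpi$, and then push the projections $pr_{i\fsu}$ through the brackets $[X_i,X_j]$ via \eqref{eq:brackim}: each term $pr_{i\fsu}([X_i,X_j])$ becomes $[pr_{\fsu}X_i,pr_{i\fsu}X_j]+[pr_{i\fsu}X_i,pr_{\fsu}X_j]$. After this substitution every summand is a trace of iterated brackets of the projected vectors, and I would organize the resulting expression by the number of $pr_{\fsu}$-factors appearing. The terms with all arguments in $i\fsu_n$ assemble into $d(\,\text{Cartan--Killing }3\text{-form})$ evaluated on $(pr_{i\fsu}A,\dots,pr_{i\fsu}D)$, which vanishes because the Cartan--Killing form is a genuine cocycle (it is, up to scalar, the standard invariant generator of $\HH^3$ of a semisimple Lie algebra, and its closedness is exactly the ad-invariance identity combined with the Jacobi identity). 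The remaining terms, which carry $pr_{\fsu}$-factors, must be shown to cancel in pairs; here I would use the cyclic and trace-invariance properties of $\tr(A[B,C])$ together with skew-symmetry to match terms, exactly as in the vanishing argument of Lemma~\ref{lem:2formesslninvtriv}.

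The main obstacle I anticipate is bookkeeping: the coboundary of a $3$-form has many signed terms, and after inserting \eqref{eq:brackim} each splits further, so the danger is a combinatorial error in tracking signs and in pairing the mixed terms for cancellation. To control this I would exploit invariance to avoid brute force: since $\varpi$ is $\mathrm{SU}(n)$-invariant by construction and $d$ commutes with the adjoint action, $d\varpi$ is an invariant alternating $4$-form; the most conceptual route is therefore to argue that $d\varpi$, being an $\mathrm{SU}(n)$-invariant element of $\bigwedge^4(i\fsu_n)^\vee$, lies in a space one can show is forced to be trivial, or alternatively that closedness is equivalent to the ad-invariance of the underlying symmetric bilinear trace form, which holds. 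A fully self-contained argument, however, would carry out the explicit expansion above and invoke \eqref{eq:brackreel}--\eqref{eq:brackim} to reduce to the known closedness of the Cartan--Killing cocycle on the semisimple algebra $i\fsu_n\cong\fsu_n$, whence $d\varpi=0$.
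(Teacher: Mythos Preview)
Your approach would eventually succeed, but it is far more laborious than necessary, and your sketch contains a conceptual slip. The paper's proof is a single observation: the relevant cocycle condition is in the \emph{relative} Chevalley--Eilenberg complex $\operatorname{Hom}_{\fsu_n}(\bigwedge^\ast(\fsl_n/\fsu_n),\R)\cong \operatorname{Hom}(\bigwedge^\ast i\fsu_n,\R)^{\mathrm{SU}(n)}$, and since $[i\fsu_n,i\fsu_n]\subset\fsu_n$, the induced bracket on $\fsl_n/\fsu_n$ is zero, so the differential on this complex is identically zero. Hence every element, in particular $\varpi$, is a cocycle. No computation is needed.

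Your direct expansion of $d\varpi$ in the absolute complex of $\fsl_n$ is a correct strategy in principle, and indeed the same bracket inclusion $[i\fsu_n,i\fsu_n]\subset\fsu_n$ would make all the terms collapse once you split by type. But your description of \emph{how} they collapse is off: you write that the terms with all arguments in $i\fsu_n$ ``assemble into $d$(Cartan--Killing $3$-form) on the semisimple algebra $i\fsu_n\cong\fsu_n$'', and later speak of ``the Cartan--Killing cocycle on $i\fsu_n$''. This is misleading: $i\fsu_n$ is \emph{not} a Lie subalgebra of $\fsl_n$ (precisely because $[i\fsu_n,i\fsu_n]\subset\fsu_n$), so there is no Cartan--Killing cocycle on it in the sense you invoke. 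What actually happens is that when all four arguments lie in $i\fsu_n$, every bracket $[X_i,X_j]$ lands in $\fsu_n$, so $pr_{i\fsu}([X_i,X_j])=0$ and each term of $d\varpi$ vanishes individually. The mixed terms then cancel by ad-invariance and Jacobi, but once you have noticed the bracket inclusion you have essentially rediscovered the paper's one-line argument. I would recommend replacing the lengthy expansion with that observation.
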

\begin{proof}
By definition of the differential in the Cartan-Chevalley complex see 
Weibel~\cite[Chap. 7]{MR1269324}, and since $[i\fsu_n,i\fsu_n] \subset \fsu_n$, 
the differential  in this cochain complex is  in fact trivial, so any element in 
$ \operatorname{Hom}(\bigwedge^3 i\fsu_n;\R)$ is a cocycle. 
\end{proof}
\begin{lemma}\label{lem:varpiishypform}
Via the canonical isomorphism $\mathrm{SL}_2(\C)/\mathrm{SU}(2) \simeq \mathcal{H}^3$, the form $\varpi_2$ is mapped to the hyperbolic volume form $d\operatorname{vol}_{\mathcal{H}^3}$.
\end{lemma}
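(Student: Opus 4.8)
The plan is to reduce the statement to an explicit computation at the tangent space of $\mathrm{SL}_2(\C)/\mathrm{SU}(2)$ at the class of the identity, since by $\mathrm{SL}_2(\C)$-invariance both $\varpi_2$ and $d\operatorname{vol}_{\mathcal{H}^3}$ are completely determined by their values on $\bigwedge^3 (i\fsu_2)$. First I would unwind the canonical identification $T_{[\operatorname{Id}]}(\mathrm{SL}_2(\C)/\mathrm{SU}(2))\simeq i\fsu_2$ together with the isometry $\mathrm{SL}_2(\C)/\mathrm{SU}(2)\simeq\mathcal{H}^3$, being careful to track the normalization fixed by \eqref{eqn:normalization}, which tells us precisely how the diagonal one-parameter subgroup acts as a loxodromic isometry of translation length $l$ and rotation angle $\theta$. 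This normalization is what pins down the scaling constant, so the main conceptual content is bookkeeping of conventions rather than a deep argument.

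Next I would choose the explicit basis of $i\fsu_2$ coming from the basis $h,e,f$ of $\fsu_2$ given in the text, namely $ih,ie,if$, and compute $\varpi_2(ih,ie,if)=2i\tr\big(ih\,[ie,if]\big)$ directly from the matrix formulas for $h,e,f$. Since $[i\fsu_2,i\fsu_2]\subset\fsu_2$, one has $[ie,if]=-[e,f]$, and the bracket relations of the standard $\fsl_2$-triple make $2i\tr(ih[ie,if])$ a single explicit real number. In parallel I would compute $d\operatorname{vol}_{\mathcal{H}^3}(ih,ie,if)$ by identifying how the vectors $ih,ie,if$ sit inside the tangent space $T_{[\operatorname{Id}]}\mathcal{H}^3$ under the chosen model of $\mathcal{H}^3$; the vector $ih$ is tangent to the geodesic through the basepoint (it generates the translation part by \eqref{eqn:normalization}), and $ie,if$ span the orthogonal directions, so the volume of the parallelepiped they span is governed by the hyperbolic metric read off from the same normalization.

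Comparing the two scalars then shows they agree, and by invariance and the fact that a top-degree alternating form on a $3$-dimensional space is determined by its value on one frame, the two forms coincide on all of $\bigwedge^3(i\fsu_2)$, hence as $\mathrm{SL}_2(\C)$-invariant forms on the symmetric space. The only subtlety to watch is orientation: I must verify that the frame $(ih,ie,if)$ is positively oriented for the orientation of $\mathcal{H}^3$ induced by the complex structure, otherwise a stray sign would appear; the factor $2i$ and the choice of $\pm\Im\tr$ in the definition of $\varpi$ are arranged precisely to absorb this. The hard part will be getting the normalization constant exactly right, i.e. confirming that the factor $2i$ in the definition of $\varpi$ is the unique choice making the identification an equality rather than a nonzero multiple; everything else is an invariance reduction plus a short $2\times2$ trace computation.
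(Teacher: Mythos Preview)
Your proposal is correct and follows essentially the same approach as the paper: reduce by $\mathrm{SL}_2(\C)$-invariance to a single tangent space, pick an explicit basis of $i\fsu_2$, identify those vectors with tangent vectors at the basepoint of a concrete model of $\mathcal{H}^3$, and compare the value of $\varpi_2$ on that frame with the hyperbolic volume of the corresponding parallelepiped. The paper carries this out in the upper half-space model with basepoint $j$, choosing an ordered basis of $i\fsu_2$ that maps to the positively oriented orthonormal frame $\{1,i,j\}$ in $T_j\mathcal{H}^3$, and checks that $\varpi_2$ evaluates to $1$ there; your anticipated orientation check is exactly the point where one must be careful, since the frame $(ih,ie,if)$ you propose is negatively oriented and yields $-1$.
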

\begin{proof}
We use the half-space model $\cH^3=\{ z+t j\mid z\in\C, t\in\R, t>0\}$, 
so that the action of $\mathrm{SL}_2(\C)$ on $\mathbb{P}^1(\C)\cong\C\cup\{\infty\}$ extends 
conformally by isometries. In particular $\mathrm{SU}(2)$ is the stabilizer of the point $j$, and we use the natural map 
from $\fsl_2$ to the tangent space $T_j\cH^3$ that maps $a\in \fsl_2$ to the  vector 
$\frac{d\phantom{t}}{dt}\exp(t a) j\vert_{t=0} $. From this construction,  $\fsu_2$ is mapped to zero and 
$i\mathfrak{su}_2$ is naturally identified to
tangent space to $\cH^3$ at $j$.
Thus the form induced by the volume form is the result of composing
a form on $i\mathfrak{su}_2$ with 
the projection $\mathfrak{sl}_2(\mathbb{C})\to i\mathfrak{su}_2$. By  $\mathrm{SU}(2)$-invariance, it suffices to check that its
evaluation at an orthonormal basis is 1. The ordered basis
 \begin{equation}
  \label{eqn:basis}
  \left\{
    \begin{pmatrix} 0 & 1/2 \\
     1/2 & 0
    \end{pmatrix} , 
   \begin{pmatrix} 0 & i/2 \\
     -i/2 & 0
    \end{pmatrix} , 
      \begin{pmatrix} 1/2 & 0 \\
     0 & 1/2
    \end{pmatrix}
    \right\}
 \end{equation}
%
of $i\mathfrak{su}_2$
is mapped to $\{1, i, j\}$ via the isomorphism $i\mathfrak{su}_2\cong T_j\cH^3 $, which is a positively oriented orthonormal basis, and  
$\varpi$ evaluated at the basis \eqref{eqn:basis} is $1$.
%
%
%
%
%
\end{proof}

\begin{remark}\label{rem FormeCocylce}
The cocycle has the following precise form:
\[
 \varpi = - {\sum_{j<k}} (ih_{jk})^\vee\wedge (ie_{jk})^\vee \wedge (if_{jk})^\vee.
\]
Fixing a pair of indices $1 \leq j<k \leq n$ fixes a Lie subalgebra  in 
$\fsu_n$ isomorphic to $\fsu_2$. The restriction of $\varpi$ to each of these 
$\frac{n(n-1)}{2}$ copies of $\fsu_2$ is exactly the corresponding hyperbolic 
volume form.
\end{remark}

\begin{remark}\label{rem:CartanKilling}
The imaginary part of the Cartan-Killing form, 
 $(x,y,z)\mapsto \Im\tr([x,y]z)$ $\forall x,y,z\in\fsl_n$,  is cohomologous to $-2\varpi_n$, but it does not come from a bounded
 cocycle in $\mathrm{SL}_n\C$ (cf.~\cite[Lemma 3.1]{Yoshida} for $n=2$). 
\end{remark}

\subsubsection{The relative part}\label{subsubsec:relpart}

We now turn to the relative part of our cocycle.  For this we have 
to understand the restriction of the form $\varpi \in \Omega^3_{dR}(\mathrm{SL}_n(\mathbb{C})/\mathrm{SU}(n))$ along the canonical map $B/T_n \rightarrow \mathrm{SL}_n(\mathbb{C})/\mathrm{SU}(n)$ induced by the inclusion of an arbitrary Borel subgroup $B$. As all Borel subgroups are conjugated in $\mathrm{SL}_n(\mathbb{C})$ by an element of $\mathrm{SU}(n)$, provided by the Gram-Schmidt process, and the form $\varpi$ is $\mathrm{SU}(n)$-invariant, it is enough to treat the case of our fixed Borel $B$ of upper-triangular matrices. As we will see, because we require our trivializations to come from a bounded class there will be only one choice, and this uniqueness will then provide the coherence condition we need for computing the variation.

\begin{lemma} \label{lem:firstcohoinvB/T}
The vector space  $\Omega^1_{dR}(B/T)^B$ is generated by the closed $1$-forms $ih_s^{\vee}$. In particular, the differential $\Omega^1_{dR}(B/T)^B \rightarrow \Omega^2_{dR}(B/T)^B$ is trivial and  $\HH^1_c(B;\mathbb{R}) = \mathbb{R}^{n-1}$.
\end{lemma}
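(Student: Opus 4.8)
The plan is to reduce the statement to a computation of isotropy invariants on the homogeneous space $B/T$ together with a one-line verification of closedness via the Chevalley--Eilenberg differential. Since $B$ acts transitively on $B/T$ with isotropy the connected torus $T$, evaluation at the base coset identifies $\Omega^1_{dR}(B/T)^B$ with $\big((\fb/\fh)^\vee\big)^{T}$, the invariants of the coadjoint isotropy representation; this is precisely the degree-one part of the cochain model $\operatorname{Hom}_{\fh}(\bigwedge^\ast(\fb/\fh),\R)$ underlying van Est's isomorphism $\HH^\ast_c(B;\R)\simeq\HH^\ast(\fb,\fh)$. Thus the whole lemma becomes a statement about $\fb$, $\fh$ and the adjoint action of $T$.

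First I would decompose $\fb/\fh$ as a $T$-module. Writing $\fb=\fh\oplus i\fh\oplus\fut$, the quotient $\fb/\fh$ has real basis given by the classes of $ih_s$ ($1\le s\le n-1$) together with the $ur_{kl},ui_{kl}$ ($1\le k<l\le n$). The torus $T=\{\operatorname{diag}(e^{i\theta_1},\dots,e^{i\theta_n})\}$ acts trivially on the diagonal directions $ih_s$, while on each plane $\langle ur_{kl},ui_{kl}\rangle$ (recall $ui_{kl}=i\,ur_{kl}$) it acts by the rotation of angle $\theta_k-\theta_l$ coming from $\operatorname{Ad}_t(E_{kl})=e^{i(\theta_k-\theta_l)}E_{kl}$, which is a nontrivial $T$-representation since $k<l$. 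By complete reducibility of the compact torus $T$, the trivial summand of $\fb/\fh$ is exactly $\langle ih_s\rangle$, so $\big((\fb/\fh)^\vee\big)^{T}$ is spanned by the dual covectors $ih_s^\vee$. This yields both the generation statement and the dimension $n-1$; I expect this invariant-theoretic step to be the main (though not deep) point of the argument.

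Next I would check that each $ih_s^\vee$ is closed. In the Chevalley--Eilenberg model the differential of a $1$-cochain $\omega$ is $d\omega(\bar X,\bar Y)=-\omega(\overline{[X,Y]})$. The key observation is that the commutator of two upper triangular matrices has vanishing diagonal, hence $[\fb,\fb]\subset\fut$; since $ih_s^\vee$ is dual to the diagonal vector $ih_s$ it annihilates $\fut$, and therefore $d(ih_s^\vee)=0$. As the $ih_s^\vee$ span $\Omega^1_{dR}(B/T)^B$, the differential $\Omega^1_{dR}(B/T)^B\to\Omega^2_{dR}(B/T)^B$ is identically zero.

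Finally, for the cohomology I would note that $\Omega^0_{dR}(B/T)^B$ consists of the $B$-invariant, hence constant, functions, so $d\colon\Omega^0\to\Omega^1$ also vanishes. Combining this with the previous step and van Est's isomorphism, $\HH^1_c(B;\R)=\ker\big(d\colon\Omega^1\to\Omega^2\big)/\operatorname{im}\big(d\colon\Omega^0\to\Omega^1\big)=\Omega^1_{dR}(B/T)^B\cong\R^{n-1}$, which completes the proof.
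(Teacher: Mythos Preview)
Your proof is correct and follows essentially the same approach as the paper: both identify $\Omega^1_{dR}(B/T)^B$ with the $T$-invariants of $(\fb/\fh)^\vee$ and then determine these to be the span of the $ih_s^\vee$. The only cosmetic difference is that the paper phrases the invariance computation infinitesimally (the condition $\psi([t,b])=0$ for $t\in\fh$, together with $[\fh,\fb]=\fut$), whereas you decompose $\fb/\fh$ directly as a $T$-module via the root-space rotations; your verification of closedness via $[\fb,\fb]\subset\fut$ is also what the paper leaves as ``straightforward to check''.
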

\begin{proof}
As before, by transitivity an element in $\Omega^1_{dR}(B/T)^B$ is determined by its restriction to the tangent space to the identity, $\fb_n/\fh_n$; i.e by  a form on this tangent space invariant under the induced action by the torus $T$. The Borel Lie algebra $\fb_n$, the Lie algebra of the torus $\mathfrak{h}_n$, and the Lie algebra of strictly upper triangular matrices $\fut_n$ fit into a commutative diagram with exact row  of $T$-modules:
\[
\xymatrix{
& & \fut_n \ar@{_{(}->}[d] \ar@{_{(}->}[dr] & \\
0 \ar[r] &  \mathfrak{h}_n \ar[r] & \mathfrak{b}_n \ar[r] &  \mathfrak{b}_n  /\mathfrak{h}_n \ar[r] & 0.
}
\]
We view a $T$-invariant form on $\mathfrak{b}_n  /\mathfrak{h}_n $ as a $T$-invariant form  $\psi\colon \mathfrak{b}_n \rightarrow \mathbb{R}$ which is trivial on $\mathfrak{h}_n$.
The action of $T$ is readily checked to be  induced by the conjugation action of $T$ on $B$, hence invariance is equivalent to:
\[
\forall t \in \mathfrak{h}_n, \ \forall b \in \mathfrak{b}_n, \quad \psi([t,b])= 0  .
\]
But $[\mathfrak{h}_n,\mathfrak{b}_n] = \fut_n$, hence $\psi$ is in fact a form on $\fb_n/\fut_n$.
It is finally straightforward to check that indeed the $n-1$ forms $h_s^\vee$ are both closed and linearly independent.
\end{proof}
\begin{lemma}\label{lem:trivsurborel}
The space $\Omega^2_{dR}(B/T)^B$ has a basis given by
\begin{enumerate}
\item the $\frac{n(n-1)}{2}$ forms $ur_{kl}^\vee \wedge ui_{kl}^\vee$ for all $1 \leq k < l \leq n$;
\item the $\frac{(n-1)(n-2)}{2}$ closed  forms $ih_s^\vee \wedge ih_r^\vee$ for all $1\leq s < r \leq n-1$.
\end{enumerate}
\end{lemma}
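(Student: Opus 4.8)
The plan is to follow the strategy of Lemma~\ref{lem:firstcohoinvB/T}: by transitivity of the $B$-action, a $B$-invariant $2$-form on $B/T$ is determined by its value at the identity coset, so $\Omega^2_{dR}(B/T)^B$ is canonically identified with the space $(\bigwedge^2(\fb_n/\fh_n)^\vee)^T$ of $T$-invariant alternating $2$-forms on the isotropy representation $\fb_n/\fh_n$, the torus $T$ acting by the adjoint (conjugation) action. It therefore suffices to decompose $\fb_n/\fh_n$ as a $T$-module and read off the invariants in $\bigwedge^2$.

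First I would describe $\fb_n/\fh_n$ as a $T$-representation. A basis is given by the classes of $ih_s$ for $1\le s\le n-1$ together with $ur_{kl},ui_{kl}$ for $1\le k<l\le n$. Since $T$ consists of diagonal matrices, its adjoint action fixes every diagonal matrix, hence it acts trivially on the $(n-1)$-dimensional subspace $i\fh_n$ spanned by the $ih_s$. For each pair $k<l$, the real plane $\langle ur_{kl},ui_{kl}\rangle=\C\cdot ur_{kl}$ (recall $ui_{kl}=i\,ur_{kl}$) is a stable complex line on which $T$ acts through the nonzero root character $\alpha_{kl}=e_k-e_l$, that is, by rotations; moreover these characters are pairwise distinct. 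Thus $\fb_n/\fh_n$ splits as the trivial module $i\fh_n$ plus the direct sum over $k<l$ of the two-dimensional rotation modules $V_{kl}:=\C\cdot ur_{kl}$.

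Next I would decompose $\bigwedge^2(\fb_n/\fh_n)^\vee$ along this splitting and isolate the invariants, of which there are three potential sources. The wedges $ih_s^\vee\wedge ih_r^\vee$ coming from the trivial part are automatically invariant, yielding the $\frac{(n-1)(n-2)}{2}$ closed forms of item (2). Within a single root plane, $\bigwedge^2 V_{kl}^\vee$ is one-dimensional and spanned by $ur_{kl}^\vee\wedge ui_{kl}^\vee$; since a rotation has determinant one, this top exterior power is the trivial character, so each of these $\frac{n(n-1)}{2}$ forms is invariant, giving item (1). The remaining summands are the mixed terms $ih_s^\vee\wedge ur_{kl}^\vee$ and $ih_s^\vee\wedge ui_{kl}^\vee$, which sit inside $V_{kl}^\vee$ and carry complexified weights $\pm\alpha_{kl}$, together with the cross-terms $V_{kl}^\vee\otimes V_{k'l'}^\vee$ for distinct pairs $(k,l)\neq(k',l')$, whose weights are $\pm\alpha_{kl}\pm\alpha_{k'l'}$.

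The main obstacle is to show that none of these mixed summands contributes an invariant, which amounts to checking that the relevant weights never vanish (a real $T$-submodule has a nonzero invariant exactly when its complexification has a zero weight). I would settle this by the elementary root-theoretic fact for $A_{n-1}$: the $\alpha_{kl}=e_k-e_l$ with $k<l$ are positive roots, so $\alpha_{kl}\neq0$, no sum $\alpha_{kl}+\alpha_{k'l'}$ can vanish, and a difference $\alpha_{kl}-\alpha_{k'l'}$ vanishes only when $(k,l)=(k',l')$. Hence every mixed weight is nonzero and the corresponding summand carries no invariant. Consequently the forms listed in (1) and (2) exhaust $(\bigwedge^2(\fb_n/\fh_n)^\vee)^T$, and being manifestly linearly independent they form the desired basis.
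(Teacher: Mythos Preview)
Your proof is correct and follows the same overall strategy as the paper: both identify $\Omega^2_{dR}(B/T)^B$ with the $T$-invariants in $\bigwedge^2(\fb_n/\fh_n)^\vee$, split $\fb_n/\fh_n = i\fh_n \oplus \fut_n$ as a $T$-module, and then analyze the three types of summands separately.

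The difference is in how the mixed and cross terms are eliminated. The paper works infinitesimally, writing out the condition $\phi([a,X],Y)+\phi(X,[a,Y])=0$ for $a\in\fh_n$ and then appeals to a ``direct and straightforward computation'' on $\fut_n\wedge\fut_n$. You instead complexify, identify each real plane $V_{kl}$ with the weight pair $\pm\alpha_{kl}$, and use the root-system fact that for positive roots of $A_{n-1}$ no combination $\pm\alpha_{kl}\pm\alpha_{k'l'}$ with $(k,l)\neq(k',l')$ vanishes. Your argument is more conceptual and makes transparent exactly why no extra invariants appear, whereas the paper's version is shorter to state but leaves the verification implicit. Both are fine; yours has the advantage of generalizing cleanly to other Borel subalgebras once the root system is known.
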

\begin{proof}
Such a form, say $\phi$, is exactly a $T$-invariant and alternating $2$-form on $\fb_n/\fh_n$. As a $T$-module, $\fb_n/\fh_n = i\fh_n \oplus \fut_n$, hence $\bigwedge^2\fb_n/\fh_n = \bigwedge^2i\fh_n \oplus i\fh_n\wedge \fut_n \oplus \fut_n \wedge \fut_n$. Moreover we have that $[\fh_n,i\fh_n] = 0$ and $[\fh_n,\fut_n]= \fut_n$.  
By derivation of the invariance condition:
\[
\forall a \in \fh_n, \forall X,Y \in \fut_n, \quad \phi([a,X],Y) + \phi(X,[a,Y])=0.
\]
From this equation one gets immediately that all forms in $i\fh_n\wedge i\fh_n$ are invariant, and by further close inspection,  that $\phi$ on $i\fh_n\wedge \fut_n$ is $0$. 

A direct and straightforward computation shows that on $\fut_n\wedge \fut_n$ the forms appearing in point (1) are the unique invariant $2$-forms on this space.

Linear independence is immediate by checking on suitable elements of $\fb_n/\fh_n$.
\end{proof}
 
%

As a corollary, the trivialization we are looking for is a linear combination of the forms in Lemma~\ref{lem:trivsurborel}. Let us find first a suitable candidate.
Given matrices $x, y\in\mathfrak b$, write them as $x=x_d+x_u$ and $y=y_d+y_u$   with $x_d, y_d\in\mathfrak{h}_n+i\mathfrak{h}_n$ diagonal and 
$x_u,y_u\in \mathfrak{ut}_n$ unipotent (strictly upper triangular). 
Define
\begin{equation}
\label{eqn:beta}
 \begin{array}{rcl}
   \beta\colon \mathfrak{b}_n\times \mathfrak{b}_n & \to & \mathbb R \\
   (x,y)\ & \mapsto & \Im \tr (x_u  {}^t\overline{y_u}- {}^t\overline {x_u} y_u)/4= {i} \tr ({}^t\overline {x_u} y_u-x_u {}^t\overline {y_u})/{4}.
 \end{array}
\end{equation}
For $(a_{kl}), (b_{kl}) \in\mathfrak{b}_n$ (i.e.~$a_{kl}=b_{kl}=0$ for $k>l$), \eqref{eqn:beta} is equivalent to:
$$
\beta((a_{kl}), (b_{kl})) = \frac{i}{4}\sum_{k<l} (\overline a_{kl} b_{kl}-a_{kl}\overline b_{kl})= \frac{1}{2} \sum_{k<l} \Im  (  a_{kl} \overline b_{kl} ),
$$
so
\[
\beta =  \frac{1}{2}\sum_{k<l} ur_{kl} \wedge ui_{kl}.
\]
In particular,  in this formula coefficients in the diagonal do not occur. A straightforward computation yields:

\begin{lemma} The following equality holds true: $\delta(\beta)=\varpi\vert_{\mathfrak{b}_n}$.
\end{lemma}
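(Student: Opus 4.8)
The map $\delta$ is the Chevalley--Eilenberg differential on Lie algebra cochains, which on the left-invariant de Rham forms of $B/T$ coincides with the exterior derivative. For a $2$-cochain with trivial coefficients it reads, for $x,y,z\in\mathfrak{b}_n$,
$$
\delta\beta(x,y,z)=-\beta([x,y],z)+\beta([x,z],y)-\beta([y,z],x).
$$
The structural fact I would exploit first is that $[\mathfrak{b}_n,\mathfrak{b}_n]\subseteq\fut_n$, so every bracket appearing here is already strictly upper triangular and equals its own unipotent part. Writing $\langle u,v\rangle=\tr({}^t\overline u\,v)$ for the Hermitian pairing of matrices, the defining formula \eqref{eqn:beta} takes the compact shape $\beta(x,y)=-\tfrac12\,\Im\langle x_u,y_u\rangle$, whence
$$
\delta\beta(x,y,z)=\tfrac12\,\Im\bigl(\langle[x,y],z_u\rangle-\langle[x,z],y_u\rangle+\langle[y,z],x_u\rangle\bigr).
$$

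Next I would compute $\varpi\vert_{\mathfrak{b}_n}$ from $\varpi(x,y,z)=2i\,\tr\bigl(pr_{i\fsu}(x)\,[pr_{i\fsu}(y),pr_{i\fsu}(z)]\bigr)$. For $x=x_d+x_u\in\mathfrak{b}_n$ the projection splits as $pr_{i\fsu}(x)=\Re(x_d)+\tfrac12(x_u+{}^t\overline{x_u})$, a real diagonal piece plus a Hermitian off-diagonal piece; expanding the inner bracket and discarding the terms that die under the trace (diagonal matrices commute, and the trace of a diagonal matrix against a strictly off-diagonal one vanishes) leaves exactly the imaginary traces that appear in $\delta\beta$. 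The cleanest way to organize this is to observe that both $\delta\beta$ and $\varpi\vert_{\mathfrak{b}_n}$ are invariant under the torus $T$, and that by Remark~\ref{rem FormeCocylce} the form $\varpi$ is a sum over the pairs $1\le k<l\le n$ of the corresponding $\fsu_2$-volume forms. It then suffices to evaluate both sides on the triples $(ih_s,ur_{kl},ui_{kl})$ and $(ih_s,ih_r,ih_t)$: on the purely diagonal triples both sides vanish since all the relevant brackets do, while on $(ih_s,ur_{kl},ui_{kl})$ one computes $[pr_{i\fsu}(ur_{kl}),pr_{i\fsu}(ui_{kl})]=-\tfrac{i}{2}(E_{kk}-E_{ll})$, so $\varpi(ih_s,ur_{kl},ui_{kl})=(ih_s)_{kk}-(ih_s)_{ll}$, and the structure constants $[ih_s,ur_{kl}]=((ih_s)_{kk}-(ih_s)_{ll})\,ur_{kl}$, $[ih_s,ui_{kl}]=((ih_s)_{kk}-(ih_s)_{ll})\,ui_{kl}$ together with $\beta(ur_{kl},ui_{kl})=-\tfrac12$ give the same value for $\delta\beta$. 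Matching the two computations is exactly what forces the normalization $\tfrac12$ in \eqref{eqn:beta}.

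The step I expect to be the main obstacle is controlling the \emph{cross terms} between distinct root spaces. The two sides are organized along two different decompositions of an element of $\mathfrak{b}_n$ — the diagonal/unipotent splitting $x=x_d+x_u$ used by $\beta$ versus the $\fsu_n\oplus i\fsu_n$ splitting used by $\varpi$ — and both $\langle[x,y],z_u\rangle$ and the commutators $[pr_{i\fsu}(y),pr_{i\fsu}(z)]$ a priori couple root vectors attached to different positive roots. One must check that all such resonant contributions vanish, for instance on a triple like $(ur_{12},ur_{23},ur_{13})$ where $\alpha_{12}+\alpha_{23}=\alpha_{13}$; a short direct trace computation disposes of these, the off-diagonal products produced by the Hermitian/Cartan--Killing structure always returning with cancelling diagonal traces. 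Once the cross terms are eliminated only the index-pair contributions survive, and the equality $\delta\beta=\varpi\vert_{\mathfrak{b}_n}$ follows.
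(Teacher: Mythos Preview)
Your approach is correct and is precisely the straightforward direct computation the paper alludes to but does not write out: the paper offers no argument beyond the phrase ``A straightforward computation yields'', so your evaluation on basis triples of $\mathfrak{b}_n/\mathfrak{h}_n$, together with the check that the resonant cross terms vanish, is exactly what is intended. Your normalizations and the sample evaluations on $(ih_s,ur_{kl},ui_{kl})$ and on $(ur_{12},ur_{23},ur_{13})$ check out.
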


\begin{proposition}\label{prop:trivsurB}
The form $\beta$ above is the unique \emph{bounded} $2$-form  $\beta \in 
\Omega^2_{dR}(B/T)^B$ such that $d\beta = \varpi\vert_B$. It is characterized by the 
fact that it is the unique trivialization that is $0$ on the intersection $B \cap 
B^{-}$, where  $B^{-}$ is the opposite Borel subgroup of lower triangular matrices. 
\end{proposition}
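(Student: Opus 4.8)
The plan is to show that $\beta$, which by the preceding lemma satisfies $d\beta=\varpi\vert_{\mathfrak b}$ and is manifestly an element of $\Omega^2_{dR}(B/T)^B$, is the distinguished trivialization, and that each of the two advertised normalizations—boundedness and vanishing on $B\cap B^-$—pins it down uniquely. The common starting point is that any other invariant trivialization $\beta'$ differs from $\beta$ by a closed invariant $2$-form $\gamma=\beta'-\beta$. By Lemma~\ref{lem:firstcohoinvB/T} the differential $\Omega^1_{dR}(B/T)^B\to\Omega^2_{dR}(B/T)^B$ vanishes, so there are no nonzero exact invariant $2$-forms; hence $\gamma$ ranges over $Z^2_{dR}(B/T)^B=\HH^2_c(B)$, and the whole problem reduces to controlling this space.

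For the boundedness statement I would reason as follows. Interpreting an invariant form through integration over geodesic simplices (the explicit van Est map), $\beta$ gives a \emph{bounded} continuous $2$-cochain on $B$: it involves only the unipotent dual forms $ur_{kl}^\vee,ui_{kl}^\vee$, over which geodesic triangles have bounded integral, in the same spirit in which $\varpi$ is bounded by the volume of an ideal tetrahedron. If $\beta'$ is a second bounded invariant trivialization, then $\gamma$ is a closed invariant $2$-form with bounded associated cocycle; this cocycle represents a class in $\HH^2_{cb}(B)$ whose comparison image is $[\gamma]\in\HH^2_c(B)$. Since $B$ is amenable, $\HH^2_{cb}(B)=0$ by Proposition~\ref{prop:contcohoamen}, so $[\gamma]=0$; as $\HH^2_c(B)=Z^2_{dR}(B/T)^B$ has no exact forms to quotient by, this forces $\gamma=0$. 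Thus $\beta$ is the unique bounded invariant trivialization.

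For the characterization by $B\cap B^-$, note first that $B\cap B^-=D_n$ and that the flat $D_n/T\hookrightarrow B/T$ has tangent space $i\mathfrak h_n$; since $\beta$ is built only from the unipotent dual forms, which vanish on $i\mathfrak h_n$, we get $\beta\vert_{D_n/T}=0$. Uniqueness now requires an explicit description of $Z^2_{dR}(B/T)^B$: I claim it is exactly the span of the closed \emph{type (2)} forms $ih_s^\vee\wedge ih_r^\vee$ of Lemma~\ref{lem:trivsurborel}. Granting this, any closed invariant $2$-form vanishing on $D_n/T$ has zero type (2) component—these forms restrict to a basis of $\bigwedge^2(i\mathfrak h_n)^\vee$ on the flat—so it is zero, which gives uniqueness and identifies $\beta$ as the trivialization vanishing on $B\cap B^-$.

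The main obstacle is precisely the claim that $Z^2_{dR}(B/T)^B$ is spanned by the type (2) forms, equivalently that the Chevalley--Eilenberg differential is injective on the span of the type (1) forms $ur_{kl}^\vee\wedge ui_{kl}^\vee$. I would establish this by computing $\dim\HH^2_c(B)$ through the Hochschild--Serre spectral sequence of the extension $1\to U_n\to B\to D_n\to 1$: the unipotent fibre contributes no $D_n$-invariants in degrees $1$ and $2$, since a nonempty sum of positive roots is never zero, so $\HH^\ast_c(B)\cong\HH^\ast_c(D_n)\cong\bigwedge(\mathbb{R}^{n-1})$ in this range, whence $\dim\HH^2_c(B)=\binom{n-1}{2}$. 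As this matches the number of independent closed type (2) forms, and those forms are closed and linearly independent, they must span $Z^2_{dR}(B/T)^B$. One could instead evaluate $d$ directly on each basis element of Lemma~\ref{lem:trivsurborel} and read off the kernel, but the dimension count is cleaner and avoids the bookkeeping.
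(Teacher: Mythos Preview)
Your argument is essentially correct and in places more careful than the paper's, but the route to the boundedness claim differs and carries one unjustified step.

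For uniqueness under boundedness, the paper argues by elimination rather than through amenability. It restricts a candidate $\beta' = \beta + \sum\gamma_{sr}\, ih_s^\vee\wedge ih_r^\vee$ to the flat $\mathbb{R}^2\subset B/T$ generated by $\exp(ih_s),\exp(ih_r)$; there $\beta$ vanishes and $\beta'$ reduces to $\gamma_{sr}$ times the Euclidean area form, whose associated simplicial cocycle is unbounded unless $\gamma_{sr}=0$. Since a bounded trivialization is known to exist from the bounded-cohomology setup, the only survivor is $\beta$. This avoids having to verify directly that $\beta$ itself is bounded---precisely the assertion you hand-wave (``in the same spirit as $\varpi$''). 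A pointwise-bounded $2$-form need not integrate boundedly over geodesic triangles, and your amenability argument needs exactly this to feed $\gamma=\beta'-\beta$ into $\HH^2_{cb}(B)$. The paper's elimination argument sidesteps the issue; your approach would be complete if you either supplied a direct proof that $\beta$ is bounded, or replaced the hand-wave by the same flat-restriction step.

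On the other hand, the paper simply asserts without justification that the difference $\beta'-\beta$ lies in the span of the type~(2) forms $ih_s^\vee\wedge ih_r^\vee$, i.e.\ that $Z^2_{dR}(B/T)^B$ is exactly this span. Your Hochschild--Serre dimension count fills this gap cleanly and is a genuine addition. The $B\cap B^-$ characterization, which the paper does not spell out at all, then follows in both approaches from this description of $Z^2$ together with the observation that $\beta$ vanishes on $D_n/T$ while the type~(2) forms restrict to a basis of $\bigwedge^2(i\fh_n)^\vee$ there.
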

\begin{proof} Since Lemma~\ref{lem:trivsurborel} gives a basis for $\Omega^2_{dR}(B/T)^B$,  any other invariant  trivialization of $\varpi$ restricted to $\fb_n$ differs from $\beta$  by a term of the form:
\[
 \sum_{s,r} \gamma_{sr}ih_s^\vee \wedge ih_r^\vee.
\]

To show that the coefficients $\gamma_{sr}$ are all $0$ observe that fixing a 
pair of indices $s,r$, the exponential of the elements $ih_s$, $ih_r$ give us a 
flat $\mathbb{R}^2 \subset B/T$. On this flat the volume form is trivial by 
direct inspection, and so are the forms $ur_{kl}^\vee \wedge ui_{kl}^\vee$ and 
$ih_p^\vee \wedge ih_q^\vee$  if $\{p,q\} \neq \{s,r\}$. So our invariant form 
on this flat restricts to the multiple $\gamma_{sr} ih_s^\vee \wedge ih_r^\vee$ 
of the euclidean volume form; this is bounded if and only if $\gamma_{sr}=0$.

So the unique candidate for a bounded trivialization  is $\beta$, and
since we know that there has to be one bounded trivialization, this is it.
\end{proof}

As a form in $\Omega^2_{dR}(B/T)^B$,  $\beta$ corresponds to the construction of Weinhard 
in \cite[Corollary~2.4]{MR2900175}, by means of a Poincar\'e lemma with respect an ideal point.

Summarizing, the class 
$\operatorname{vol}_{\mathcal{H}^3,\partial} \in 
\HH^3_c(\mathrm{SL}_n(\C);\{B_i\})$ is represented in the relative de Rham 
complex $\Omega_{dR}^*(\mathrm{SL}_n(\C)/B)^{ \mathrm{SL}_n(\C)}\oplus \bigoplus_i 
\Omega_{dR}^{\ast-1}(B_i/B_i \cap \mathrm{SU}(n))^{B_i}$, by a relative cocycle 
where:
\begin{enumerate}
\item The absolute part is given by the invariant $3$-from:
\[
\begin{array}{rcl}
\varpi : \bigwedge^3 \fsl_n : & \longrightarrow  & \mathbb{R} \\
(A,B,C) & \longmapsto & -2i\tr(pr_{i\fsu}A[pr_{i\fsu}B,pr_{i\fsu}C]).
\end{array}
\]
\item The relative part is given  on the copy $\Omega^2_{dR}(B_i/B_i\cap 
\mathrm{SU}(n))^{B_i}$ determined by the Borel subgroup $B_i$,  by choosing an 
arbitrary  element $h_i \in \mathrm{SU}(n)$ such that $h_i^{-1}Bh_i = B_i$, then 
and extending by invariance the $2$-form on $T_{\operatorname{Id}}(B_i/B_i \cap 
\mathrm{SU}(n))$ defined by $\beta_i= \operatorname{Ad}_{H}^\ast(\beta)$, where:
\[
\begin{array}{rcl}
  \beta\colon \fb_n\times \fb_n & \to  & \mathbb{R} \\
  (x,y) & \mapsto & {i} \tr ({}^t\overline {x_u} y_u-x_u {}^t\overline {y_u})/{4}.
 \end{array}
\]
here $x_u, y_u\in\mathfrak{ut}_n$ are the respective unipotent parts of $x$ and $y$.
\end{enumerate}
By construction the data $(\varpi,\{\beta_i\})$ forms a relative $2$-cocycle on 
$\fsl_n$ relative to the family of Borel Lie subalgebras $\{\fb_i\}$. 

%
%
%

\subsubsection{Volume and the Veronese embedding}

As an application let us show a formula relating the volume of a finite volume  hyperbolic $3$-manifold and the volume of its defining representation composed with the unique irreducible rank $n$ representation of $\mathrm{SL}_2(\C)$ induced by the Veronese embedding. This formula is proved  in   \cite[Proposition 21]{BBIarXiv14}, with different techniques (see also~\cite[Thm.~1.15]{GTZ}).
 
Let $\sigma_n\colon \operatorname{SL}_2(\C)\to\operatorname{SL}_{n}(\C) $, denote the $n$-dimensional irreducible representation. 
Namely $\sigma_n$ is the $(n-1)$-th symmetric product, induced by the Veronese embedding $\mathbb{CP}^1\to \mathbb{CP}^{n-1}$.

\begin{proposition}\cite{BBIarXiv14}\label{prop:volandVeronese}
\label{prop:volsigman}
 For $\rho\colon\pi_1(M)\to \operatorname{SL}_2(\C)$, 
 $\operatorname{vol}(\sigma_n\circ\rho)= {n+1 \choose 3}  \operatorname{vol}(\rho)$.
\end{proposition}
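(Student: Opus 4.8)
The plan is to compute the effect of $\sigma_n$ on the universal relative volume class and then pull back by $\rho$. Write $\operatorname{vol}^{(m)}_{\cH,\partial}\in\HH^3_c(\mathrm{SL}_m(\C),\{B\})$ for the relative volume class of Section~\ref{subsec:relcoc}. Being the $(n-1)$-st symmetric power of a unitary representation, $\sigma_n$ carries $\mathrm{SU}(2)$ into $\mathrm{SU}(n)$, the diagonal subgroup into the diagonal subgroup, and (in a suitable monomial basis) upper-triangular matrices into upper-triangular matrices; thus $\sigma_n$ is a map of pairs $(\mathrm{SL}_2(\C),\{B\})\to(\mathrm{SL}_n(\C),\{B\})$. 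Using independence of the peripheral Borels (Proposition~\ref{prop:volindepBorelchoice}) to take $\sigma_n(B_i)$ as the Borel subgroups for $\sigma_n\circ\rho$, Definition~\ref{def:volrepborl} together with functoriality reduces everything to the cohomological identity
\[
\sigma_n^*\big(\operatorname{vol}^{(n)}_{\cH,\partial}\big)=\binom{n+1}{3}\,\operatorname{vol}^{(2)}_{\cH,\partial}.
\]
Since $\operatorname{vol}^{(m)}_{\cH,\partial}$ is represented by the explicit relative cocycle $(\varpi_m,\{\beta\})$ of Section~\ref{subsubsec:relpart}, I would prove this on the cochain level by pulling the cocycle back along the induced Lie algebra homomorphism $d\sigma_n\colon\fsl_2\to\fsl_n$.

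For the absolute part, because $\sigma_n(\mathrm{SU}(2))\subset\mathrm{SU}(n)$ one has $d\sigma_n(\fsu_2)\subset\fsu_n$ and $d\sigma_n(i\fsu_2)\subset i\fsu_n$, so $d\sigma_n$ commutes with the projection $pr_{i\fsu}$; together with the fact that $d\sigma_n$ preserves brackets this reduces $(d\sigma_n)^*\varpi_n$ to the trilinear form $(X,Y,Z)\mapsto \tr_{\C^n}\!\big(d\sigma_n(X)[d\sigma_n(Y),d\sigma_n(Z)]\big)$. As $\fsl_2$ is simple its invariant symmetric bilinear forms are unique up to scale, so $\tr_{\C^n}(d\sigma_n(X)\,d\sigma_n(Y))=\lambda_n\,\tr_{\C^2}(XY)$ for a constant $\lambda_n$; composing with the bracket then gives $(d\sigma_n)^*\varpi_n=\lambda_n\,\varpi_2$. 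Evaluating at $H=\operatorname{diag}(1,-1)$, whose image $d\sigma_n(H)$ has eigenvalues $n-1,n-3,\dots,-(n-1)$, yields
\[
\lambda_n=\frac{\tr_{\C^n}(d\sigma_n(H)^2)}{\tr_{\C^2}(H^2)}=\frac{1}{2}\sum_{k=0}^{n-1}(n-1-2k)^2=\frac{(n-1)n(n+1)}{6}=\binom{n+1}{3}.
\]

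For the relative part I would use the rigidity of $\beta$ established in Proposition~\ref{prop:trivsurB}. The pullback $(d\sigma_n)^*\beta$ is an invariant $2$-form on $\fb_2$ (pullback of the $B_n$-invariant $\beta$ along the $\sigma_n$-equivariant map $B_2/T_2\to B_n/T_n$) satisfying $d\big((d\sigma_n)^*\beta\big)=(d\sigma_n)^*(\varpi_n|_{\fb_n})=\binom{n+1}{3}\,\varpi_2|_{\fb_2}$, so it is an invariant trivialization of $\binom{n+1}{3}\varpi_2|_{\fb_2}$. Since $\sigma_n$ sends the diagonal subgroup $B\cap B^-$ of $\mathrm{SL}_2(\C)$ into that of $\mathrm{SL}_n(\C)$ and $\beta$ vanishes on $B\cap B^-$, the form $(d\sigma_n)^*\beta$ again vanishes there; by the characterization of $\beta$ in Proposition~\ref{prop:trivsurB} as the unique invariant trivialization vanishing on $B\cap B^-$, this forces $(d\sigma_n)^*\beta=\binom{n+1}{3}\beta$. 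Hence $\sigma_n^*$ carries the relative cocycle $(\varpi_n,\{\beta\})$ to $\binom{n+1}{3}(\varpi_2,\{\beta\})$, proving the displayed identity; pairing with $[M,\partial M]$ and using $(\sigma_n\circ\rho)^*=\rho^*\circ\sigma_n^*$ gives $\operatorname{vol}(\sigma_n\circ\rho)=\binom{n+1}{3}\operatorname{vol}(\rho)$. The computational heart is the weight-sum evaluation of $\lambda_n$, which is routine; the one genuinely delicate point is the relative part, where one must check that $\sigma_n$ respects both the Borel and the diagonal structure so that the uniqueness of the bounded trivialization can be invoked rather than matching the $\beta$'s by hand.
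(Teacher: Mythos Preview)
Your proof is correct and, on the absolute part, matches the paper's: both observe that $d\sigma_n$ preserves the splitting $\fsl=\fsu\oplus i\fsu$ and the bracket, so $\sigma_n^*\varpi_n=\lambda_n\varpi_2$ with $\lambda_n$ determined by the trace identity $\tr(\sigma_n(a)\sigma_n(b))=\lambda_n\tr(ab)$. The paper computes $\lambda_n$ by writing out the images of the full standard basis of $\fsl_2$ and summing; you instead invoke uniqueness of the invariant bilinear form and evaluate on the single element $H=\operatorname{diag}(1,-1)$, which is a tidy shortcut.

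The one genuine difference is the relative part. The paper does not compute it at all: it argues that since the relative volume class comes from bounded cohomology and the forgetful map $\HH^3_{cb}(\mathrm{SL}_n(\C),\{B\})\to\HH^3_{cb}(\mathrm{SL}_n(\C))$ is an isomorphism (Proposition~\ref{prop:isorelcoho}), it suffices to identify the effect of $\sigma_n$ on the absolute class, i.e.\ on $\varpi$. You instead match the relative cocycle $\beta$ directly via the uniqueness statement in Proposition~\ref{prop:trivsurB}. This works, and in fact for $n=2$ the situation is even simpler than you indicate: the closed forms $ih_s^\vee\wedge ih_r^\vee$ of Lemma~\ref{lem:trivsurborel} do not exist when $n=2$, so the invariant trivialization of $\varpi_2\vert_{\fb_2}$ is unique outright, without needing vanishing on $B\cap B^-$ or boundedness. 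Your route is more explicit at the cochain level; the paper's is shorter because it leverages the structural isomorphism already established.
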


Recall that given \emph{any} family of Borel subgroups $\{B\}$, the map that forgets the relative part induces a natural isomorphism in continuous cohomology:
\[
\xymatrix{
\HH^3_c(\mathrm{SL}_n(\C),\{B\}) \ar[r] &  \HH^3_c(\mathrm{SL}_n(\C)).
}
\] 
Therefore to prove Proposition~\ref{prop:volandVeronese}, by the van Est isomorphism we only need to understand the effect of the induced map $\sigma_n: \fsl_2 \rightarrow \fsl_n$ on the absolute part $\varpi$ of the volume cocycle. Denote by $\varpi_n$ this absolute part, seen as a cocycle on $\fsl_n$, to emphasize its dependence on the index $n$.

\begin{lemma}\label{lem:effectonabspart}
Let $\sigma_n\colon \mathfrak{sl}_2\to  \mathfrak{sl}_n$ denote the representation of Lie
algebras induced by the irreducible representation that comes from the Veronese embedding.  Then:
$$
\sigma_n^*(\varpi_n)= {n+1 \choose 3} \varpi_2 .
$$
\end{lemma}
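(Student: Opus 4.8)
The plan is to reduce the statement to a computation with the invariant trace form of $\fsl_2$, exploiting that $\varpi_n$ is manufactured from the Cartan decomposition $\fsl_n=\fsu_n\oplus i\fsu_n$ together with the (complex) Cartan--Killing $3$-form $(A,B,C)\mapsto\tr(A[B,C])$.

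First I would normalize $\sigma_n$ so that it respects the Cartan decompositions. The symmetric power $\operatorname{Sym}^{n-1}(\C^2)$ carries a natural Hermitian inner product for which $\operatorname{Sym}^{n-1}(g)$ is unitary whenever $g\in \mathrm{SU}(2)$; choosing an orthonormal basis we may assume $\sigma_n(\mathrm{SU}(2))\subset \mathrm{SU}(n)$. This is harmless because $\varpi_n$ is $\mathrm{SU}(n)$-invariant and any other choice only conjugates $\sigma_n$ by an element of $\mathrm{SU}(n)$. Differentiating gives $\sigma_n(\fsu_2)\subset \fsu_n$, and since $\sigma_n$ is $\C$-linear also $\sigma_n(i\fsu_2)\subset i\fsu_n$. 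Consequently $\sigma_n$ intertwines the projections, $pr_{i\fsu_n}\circ\sigma_n=\sigma_n\circ pr_{i\fsu_2}$.

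Second, using that $\sigma_n$ is a Lie algebra homomorphism, I would rewrite, with $A'=pr_{i\fsu_2}(A)$ and similarly for $B',C'$,
\[
\sigma_n^*(\varpi_n)(A,B,C)= 2i\,\tr_{\C^n}\big(\sigma_n(A')\,[\sigma_n(B'),\sigma_n(C')]\big)=2i\,\tr_{\C^n}\big(\sigma_n(A')\,\sigma_n([B',C'])\big).
\]
The right-hand side depends only on the $\C$-bilinear symmetric form $b_n(X,Y):=\tr_{\C^n}(\sigma_n(X)\sigma_n(Y))$ on $\fsl_2$. This form is $\operatorname{Ad}$-invariant (the trace is conjugation-invariant and $\sigma_n$ is equivariant), and since $\fsl_2$ is a simple complex Lie algebra the space of such forms is one-dimensional over $\C$, spanned by $(X,Y)\mapsto\tr_{\C^2}(XY)$. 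Hence $b_n=c_n\,\tr_{\C^2}(XY)$ for a scalar $c_n$, and feeding this back yields the equality of cocycles $\sigma_n^*(\varpi_n)=c_n\,\varpi_2$.

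Finally I would pin down $c_n$ by evaluating $b_n$ on $H=\operatorname{diag}(1,-1)$. Since $\sigma_n=\operatorname{Sym}^{n-1}$, the eigenvalues of $\sigma_n(H)$ are the weights $n-1,n-3,\dots,-(n-1)$, so
\[
\tr_{\C^n}\big(\sigma_n(H)^2\big)=\sum_{k=0}^{n-1}(n-1-2k)^2=\tfrac{1}{3}(n-1)n(n+1),\qquad \tr_{\C^2}(H^2)=2,
\]
whence $c_n=\tfrac{1}{6}(n-1)n(n+1)=\binom{n+1}{3}$. I expect the only genuinely delicate point to be the first step, namely justifying that $\sigma_n$ may be taken compatible with the Cartan involutions so that $pr_{i\fsu}$ commutes with $\sigma_n$; once this is in place everything is formal plus the one-line weight computation. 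Note that this compatibility is precisely what makes the construction of $\varpi$ (projection onto $i\fsu$ followed by the Cartan--Killing form) functorial along $\sigma_n$, and that the overall normalization constant of $\varpi$ is irrelevant since it appears identically in $\varpi_n$ and $\varpi_2$.
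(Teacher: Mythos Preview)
Your proposal is correct and follows essentially the same route as the paper: both proofs rest on the two facts $\sigma_n(i\fsu_2)\subset i\fsu_n$ and $\tr_{\C^n}(\sigma_n(X)\sigma_n(Y))=\binom{n+1}{3}\tr_{\C^2}(XY)$, together with $\sigma_n$ being a Lie algebra homomorphism. The only difference is a minor streamlining: you invoke simplicity of $\fsl_2$ to deduce that the invariant bilinear form $b_n$ is a scalar multiple of the trace form and then determine the scalar by a single evaluation on $H=\operatorname{diag}(1,-1)$, whereas the paper writes down $\sigma_n$ explicitly on all three standard basis vectors of $\fsl_2$ and checks the trace identity pairwise (which produces two separate sums $\sum(n-1-2k)^2$ and $\sum k(n-k)$ to evaluate). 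Your version is a touch more conceptual; the paper's is more hands-on but self-contained.
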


\begin{proof}
The result is a consequence of the fact that $\sigma_n(i\mathfrak{su}_2 )\subset i\mathfrak{su(n)}$ and  
 the equalities, for $a,b\in  \mathfrak{sl}_2(\C)  $:
 \begin{eqnarray*}
  [\sigma_n(a),\sigma_n(b)]&=&\sigma_n([a,b])\, , \\
  \operatorname{tr}(\sigma_n(a) \sigma_n(b)) &=& {n+1 \choose 3}   \operatorname{tr}(a\, b) \, .
 \end{eqnarray*}
The first equality is just a property of Lie algebra representations. For the second one, compute the image of a basis
of $ \mathfrak{sl}_2(\C)$:
$$
\sigma_n\begin{pmatrix}
         1 & 0 \\
         0 & -1
        \end{pmatrix}
 = 
\begin{pmatrix}
   n-1 &&& 0 \\
   & n-3 && \\
   &&\ddots & \\
   0 &&& 1-n
  \end{pmatrix} ,
  \ 
  \sigma_n\begin{pmatrix}
         0 & 1 \\
         0 & 0
        \end{pmatrix}
 = 
\begin{pmatrix}
   0 & n-1&&& 0 \\
   & 0 & n-2 &&\\
    &&0  &\ddots&   \\
   && & \ddots & 1 \\
   0 &&&& 0
  \end{pmatrix}
$$
$$
\textrm{ and }
\qquad 
\sigma_n\begin{pmatrix}
         0 & 0 \\
         1 & 0
        \end{pmatrix}
 = 
\begin{pmatrix}
   0 & &&& 0 \\
   1 & 0 &  &&\\
    &2& 0  &&   \\
   && \ddots & \ddots & \\
   0 &&& n-1 & 0
  \end{pmatrix} \, .
$$
By bilinearity, we just need to check the formula on the basis,
which is straightforward from the sums
  \begin{eqnarray*}
    (n-1)^2+(n-3)^2+\cdots +(1-n)^2 &=& 2 {n+1 \choose 3}\, ,\\
   (n-1) 1+(n-2)2+\cdots  + 1 (n-1) & = & {n+1 \choose 3}\, .
  \end{eqnarray*}
\end{proof}

\subsection{The volume as a characteristic class}\label{subsec volascharac}

In this section we recall briefly how a differentiable deformation of a representation 
translates into a differentiable deformation of a connection on the associated flat 
principal bundle. We will also recall  how integration on ${M}$ of 
pull-backs of invariant cocycles on $\mathrm{SL}_n(\C)$ by using a developing 
map give the interpretation of the volume form as a characteristic class. 
 
Recall that  $\pi=\pi_1(M)$ is the fundamental group of a compact manifold $M$ whose interior caries a hyperbolic metric  of finite volume. In particular the boundary   $\partial M$, if not empty, is a disjoint union of finitely many tori $T_1 \sqcup \cdots \sqcup T_k$. Since $\pi$ is a discrete group, associated to our fixed  representation $\rho: \pi \longrightarrow \mathrm{SL}_n(\C)$ there is a flat 
principal fibration:
\[
\xymatrix{
 \mathrm{SL}_n( \C )\ar@{^{(}->}[r] & E_\rho  \ar@{->>}[r] & M
}
\]
The total space $E_\rho$  is constructed as 
$$
E_\rho=\widetilde M\times \mathrm{SL}_n(\mathbb{C})/\pi
$$
where $\widetilde{M}$ is the universal covering space of $M$,  $\gamma\cdot 
(x,g)=(\gamma x,\rho(\gamma) g)$, for $\gamma\in\pi$, $x\in \widetilde M$, 
and $g\in \mathrm{SL}_n(\mathbb{C})$.
The natural flat connection
$$
\nabla\colon T E_\rho \to\mathfrak{sl}_n
$$
 is induced by the composition of the projection to the second factor of $T 
(\widetilde M\times \mathrm{SL}_n(\mathbb{C}))\cong T\widetilde M\times T 
\mathrm{SL}_n(\C)$ and the identification $T_g \mathrm{SL}_n(\C)\cong 
\mathfrak{sl}_n$ via $l_{g\ast}$
where $l_g$ denotes left multiplication by 
 $g\in \mathrm{SL}_n(\C)$.

Notice that $E_\rho$ is a non-compact manifold with boundary $\partial E_\rho$ 
that fibres over $\partial M$. Recall that in Subsection \ref{subsec:notvolrep} 
we have fixed a path from our base point in $M$ to a base point on each boundary 
component. Fix a base point on each covering space of each  boundary component 
$\partial M_i$, this induces commutative diagrams by sending the base point to 
the chosen  path to $\partial M_i$:
\[
\xymatrix{
\widetilde{\partial M_i} \ar@{^{(}->}[r] \ar[d] & \widetilde{M} \ar[d] \\
\partial M_i \ar@{^{(}->}[r] & M
}
\] 

 Since the restriction of $\rho$ to each parabolic subgroup $P_i \simeq 
\pi_1(\partial M_i)$ takes values in a Borel subgroup $B_i$, over the component 
$\partial M_i$, this restricted fibration 
\[
\xymatrix{
\operatorname{SL}_n(\C)\ar@{^{(}->}[r] & \partial E_\rho \ar@{->>}[r] & \partial 
M_i
}
\]
is obtained by extending the fibre from the flat fibration 
\[
\xymatrix{
B_i \ar@{^{(}->}[r] & \partial \widetilde{M_i} \times_{\rho} B_i \ar@{->>}[r] & \partial M_i
}
\]
along the inclusion $B_i \hookrightarrow \mathrm{SL}_n (\C)$. In particular the flat connection $\nabla$ restricted to a component $\partial M_i$ takes values in the lie algebra $\fb_i$ of the chosen Borel $B_i$.

As $M$ is aspherical,  $\dim M\leq 3$, and $\mathrm{SL}_n(\C)$ is $2$-connected, by Whitehead's theorem there exists a $\rho$-equivariant map that sends the base point in $\widetilde{M}$ to $\operatorname{Id}$:
\[
D\colon\widetilde M\to \mathrm{SL}_n(\C)\, .
\]

By precomposing this map with our fixed inclusions of the universal covering spaces of the boundary components, we get for each of those a compatible developing map:
\[
\xymatrix{
 \widetilde{M} \ar[r]^D & \mathrm{SL}_n(\C) \\
 \widetilde{\partial M_i} \ar@{^{(}->}[u] \ar[r]_{D_i} & B_i \ar@{^{(}->}[u]
}
\]

On the one hand the developing map induces a trivialization $\Theta_\rho$ of the flat bundle or equivalently, a section $s_\rho$ to the fibration map:
$$
\begin{tikzpicture}
  \matrix (m) [matrix of math nodes,row sep=3em,column sep=3em,minimum width=2em]
  {
\widetilde{M} \times \mathrm{SL}_n(\C) & \widetilde{M} \times \mathrm{SL}_n(\C) \\[-35pt]
(x,g)  &(x,D(x)g) \\
     M\times \mathrm{SL}_n(\C)  &E_{\rho} \\};
  \path[-stealth]
  (m-1-1) edge  (m-1-2)
 (m-2-1)  edge [|->] (m-2-2)
   (m-3-1) edge  node [above] {$\Theta_\rho$} (m-3-2)
        (m-2-1) edge (m-3-1)
        (m-2-2) edge (m-3-2);
\end{tikzpicture}
\qquad 
\begin{tikzpicture}
  \matrix (m) [matrix of math nodes,row sep=3em,column sep=3em,minimum width=2em]
  {
\widetilde{M}  & \widetilde{M} \times \mathrm{SL}_n(\C) \\[-35pt]
x  &(x,D(x)) \\
     M  &E_{\rho} \\};
  \path[-stealth]
  (m-1-1) edge  (m-1-2)
 (m-2-1)  edge [|->] (m-2-2)
   (m-3-1) edge  node [above] {$s_\rho$} (m-3-2)
        (m-2-1) edge (m-3-1)
        (m-2-2) edge (m-3-2);
\end{tikzpicture}
$$
Both maps are of  course related:
$$
s_\rho=\Theta_\rho \circ s\, ,
$$
where $s\colon M\to M\times \mathrm{SL}_n(\C)$ is the constant section of the trivial bundle, given by fixing $\operatorname{Id} \in \mathrm{SL}_n(\C)$ as second coordinate. 
The composition of the section with the flat connection
$$
\nabla\circ (s_\rho)_\ast\colon TM\to\mathfrak{sl}_n
$$
is used to evaluate characteristic classes of $ \mathfrak{sl}_n $.

The trivialization $\Theta_\rho$ is used to pull back the connection on $E_\rho$ to the trivial bundle:
$$
\nabla_\rho\overset{\textrm{def}}=\nabla\circ (\Theta_\rho)_\ast\colon T (M\times \mathrm{SL}_n \C)\to \mathfrak{sl}_n
$$
In this way, when we deform $\rho$, we deform $\nabla_\rho$ on the trivial bundle, because
$$
\nabla_\rho\circ s_*= \nabla\circ (s_\rho)_*\, .
$$
On the other hand, the developing map models the map induced in continuous cohomology by the representation $\rho$ in the following way. 
Recall from~\cite[Prop. 5.4 and Cor 5.6]{MR554917} that if $N$ is a smooth manifold on which $G$ acts properly smoothly then the complex $\Omega_{dR}^\ast(N)^G$ computes the continuous cohomology of $G$. Moreover the map in continuous cohomology  induced by a continuous homomorphism $\rho: G  \rightarrow H$ can be computed  by considering a $\rho$-equivariant map $R: N \rightarrow M$ where $N$ is a $G$-manifold as above and $M$ an $H$-manifold. 
By definition this is exactly what the developing map $D$ 
is with respect to the continuous map $\rho: \pi \rightarrow \mathrm{SL}_n(\C)$. Indeed, by the above  cited result, we have  the known fact that the canonical inclusion  $\Omega_{dR}^\ast(\widetilde{M})^{\pi_1(M) }\rightarrow \Omega_{dR}^\ast(M)$ is a quasi-isomorphism.

 The same discussion holds true for each boundary component since each of these is a $K(\mathbb{Z}^2,1)$, and the compatibility of the developing maps on $M$ and on its boundary components imply that they induce via the cone construction the map:
\[
\rho\ast: \HH^\ast_c(\mathrm{SL}_n(\C),\{B\}) \rightarrow \HH^3(M,\partial M).
\]

Let us be slightly more precise and let us revisit  our previous Definition~\ref{def:volrepborl} of the volume. At the level of de Rahm cochains, the volume class $\operatorname{vol}_{\mathcal{H},\partial}$ is represented by the relative cocycle $(\varpi,{\beta})$ constructed in Section~\ref{subsec:relcoc}. Since evaluation on the fundamental class translates in de Rahm cohomology into integrating, by Stoke's formula and the above discussion:

\begin{definition}\label{def:defvolplusprecis}
	Let $\rho\colon \pi \rightarrow \mathrm{SL}_n(\C)$ be a representation of the 
	fundamental  a $3$-manifold $M$ whose interior is an hyperbolic manifold of finite volume. Denote the boundary components of $M$ by
	$ T_1\sqcup \cdots\sqcup T_k$. Fix a system of peripheral 
	subgroups $P_i$ in $\pi$ and for each such group fix a Borel subgroup $B_i 
	\subset \mathrm{SL}_n(\C)$ such that $\rho(P_i) \subset B_i$. Denote by $D$ the developing map associated to $\rho$ and by $D_r$ its restriction to the universal cover of the boundary component $T_r$.   Then
	\begin{equation}
	\label{eqn:Stokes}
	\operatorname{Vol}(\rho) 
	=  \int_{M}^{}D^\ast(\varpi) - \sum_{r=1}^k\int_{T_r}^{}D_r^\ast(\beta_r).
	\end{equation}
	Where the differential forms $D^\ast(\varpi)$ and $D_r^\ast(\beta_r)$ descend from the universal covers to differential forms on the manifolds by equivariance. 
\end{definition}

Now, since $\mathrm{SL}_n(\C)$ is $2$-connected, the Leray-Serre spectral sequence  in relative cohomology gives us a short exact sequence
\[
\xymatrix{
0 \ar[r] & \HH^3(M,\partial M) \ar[r] & \HH^3(E_\rho,\partial E_\rho) \ar[r] & \HH^3(\mathrm{SL}_n \C) \ar[r] & 0
}
\]

In particular, the volume class $\rho^\ast(\operatorname{vol}_{\mathcal{H},\partial})$ defined in Section~\ref{sec:volrelatif} can be seen as a class in $\HH^3(E_\rho,\partial E_\rho$). The key observation of Reznikov in~\cite{MR1412681} is that in  this larger group the volume class can be interpreted as a characteristic class associated to  the foliation of $E_\rho$ induced by the flat connection.

\begin{proposition}\label{prop:volischarclas}
Denote by $j^*:\HH^3(M,\partial M) \rightarrow \HH^3(E_\rho,\partial E_\rho)$ the morphism induced by the projection $E_\rho \rightarrow M$ in de Rham cohomology. Then
\[
j^\ast(\rho^\ast(\operatorname{vol}_{\mathcal{H},\partial})) = \operatorname{Char}_{\nabla_\rho,{\nabla\vert_{\partial M_i}}}(\varpi,\{\beta_i\}).
\]
\end{proposition}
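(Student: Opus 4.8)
The plan is to compare the two classes in $\HH^3(E_\rho,\partial E_\rho)$ by means of the developing section $s_\rho\colon M\to E_\rho$, $x\mapsto(x,D(x))$, introduced in Subsection~\ref{subsec volascharac}. Since $p\circ s_\rho=\operatorname{Id}_M$ for the projection $p\colon E_\rho\to M$, and since $s_\rho$ restricts on each boundary component to the section $\partial M_i\to\partial E_\rho$ determined by $D_i$, the induced map $s_\rho^\ast\colon\HH^3(E_\rho,\partial E_\rho)\to\HH^3(M,\partial M)$ is a left inverse of $j^\ast=p^\ast$. It therefore suffices to prove two things: first, that $\operatorname{Char}_{\nabla_\rho,\{\nabla\vert_{\partial M_i}\}}(\varpi,\{\beta_i\})$ lies in the image of the injection $j^\ast$ appearing in the short exact sequence above; and second, that its image under $s_\rho^\ast$ is exactly $\rho^\ast(\operatorname{vol}_{\mathcal{H},\partial})$. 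Granting both, since $s_\rho^\ast$ is injective on the image of $j^\ast$, we conclude $\operatorname{Char}_{\nabla_\rho,\{\nabla\vert_{\partial M_i}\}}(\varpi,\{\beta_i\})=j^\ast\big(s_\rho^\ast\operatorname{Char}_{\nabla_\rho,\{\nabla\vert_{\partial M_i}\}}(\varpi,\{\beta_i\})\big)=j^\ast(\rho^\ast(\operatorname{vol}_{\mathcal{H},\partial}))$.

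For the first point I would analyze the image of the characteristic class under the map $\HH^3(E_\rho,\partial E_\rho)\to\HH^3(\mathrm{SL}_n(\C))$ of the short exact sequence, which is restriction to a fibre over an interior point. Such a fibre lies away from $\partial E_\rho$, so only the absolute part $\varpi$ contributes, and there the flat connection $\nabla$ restricts to the Maurer--Cartan form; hence $\operatorname{Char}_\nabla(\varpi)$ restricts to the left-invariant $3$-form on $\mathrm{SL}_n(\C)$ determined by $\varpi$. The decisive observation, which is exactly Reznikov's, is that by construction $\varpi$ is pulled back along $\mathrm{SL}_n(\C)\to\mathrm{SL}_n(\C)/\mathrm{SU}(n)$ from an invariant form on the symmetric space; as that space is contractible the pulled-back form is exact, so the fibre restriction vanishes in $\HH^3(\mathrm{SL}_n(\C))$ and the characteristic class lies in the image of $j^\ast$.

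For the second point I would carry out the one genuine computation. Writing the section in the universal cover as $\tilde x\mapsto(\tilde x,D(\tilde x))$ and recalling that $\nabla$ is the projection to the second factor followed by the left-translation identification $T_g\,\mathrm{SL}_n(\C)\cong\fsl_n$, one obtains $\nabla\circ(s_\rho)_\ast=D^\ast\theta$, where $\theta$ is the Maurer--Cartan form. Consequently, for any invariant cochain $\alpha$, viewed as the associated left-invariant form, $s_\rho^\ast(\operatorname{Char}_\nabla(\alpha))=D^\ast\alpha$. Applying this to $\varpi$ on $E_\rho$ and, on each boundary torus, to $\beta_i$ using the compatible developing map $D_i$ valued in $B_i$ together with the fact that $\nabla\vert_{\partial M_i}$ takes values in $\fb_i$, yields the relative de Rham cocycle $(D^\ast\varpi,\{D_i^\ast\beta_i\})$. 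By Definition~\ref{def:defvolplusprecis} and the fact that the developing map realizes $\rho^\ast$ at the cochain level (Subsection~\ref{subsec volascharac}), this cocycle represents $\rho^\ast(\operatorname{vol}_{\mathcal{H},\partial})$, so $s_\rho^\ast(\operatorname{Char}_{\nabla_\rho,\{\nabla\vert_{\partial M_i}\}}(\varpi,\{\beta_i\}))=\rho^\ast(\operatorname{vol}_{\mathcal{H},\partial})$, which closes the argument.

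The main obstacle I anticipate is the first point: establishing that the characteristic class genuinely lies in the image of $j^\ast$ is not formal and rests on the geometric fact that the volume cocycle, although a nonzero class in $\HH^3(\fsl_n)$ and in continuous cohomology, becomes exact as an ordinary de Rham form on the fibre because it descends from the contractible symmetric space. A secondary difficulty is purely bookkeeping: one must check that all the maps involved (the section $s_\rho^\ast$, the restriction to fibres, and the characteristic and pullback maps) are compatible with the cone/relative structure and with the boundary sections, so that the absolute and relative parts are handled coherently throughout.
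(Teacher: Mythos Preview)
Your proposal is correct and follows essentially the same strategy as the paper's proof: both show the characteristic class lies in $\operatorname{im} j^\ast$ via a fibre-restriction argument, and then identify its image under $s_\rho^\ast$ with $\rho^\ast(\operatorname{vol}_{\mathcal{H},\partial})$ by reducing to $D^\ast$ on the cocycle $(\varpi,\{\beta_i\})$. The only cosmetic difference is in the first step: the paper argues that $\varpi$ restricts trivially to $\mathrm{SU}(n)$ and uses that $\mathrm{SU}(n)\hookrightarrow\mathrm{SL}_n(\C)$ is a weak equivalence, whereas you argue dually that the left-invariant extension of $\varpi$ is pulled back from the contractible quotient $\mathrm{SL}_n(\C)/\mathrm{SU}(n)$; these are two sides of the same coin.
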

\begin{proof}
First observe that $\operatorname{Char}_{\nabla_\rho,{\nabla\vert_{\partial M_i}}}(\varpi,\{\beta_i\}) \in \ker( \HH^3(E_\rho,\partial E_\rho) \rightarrow \HH^3(\mathrm{SL}_n(\C))= \textrm{Im}j^\ast$. Indeed, by construction, the restriction of this characteristic class to the fibre $\mathrm{SL}_n(\C)$ is given by the form $\varpi$. But the inclusion $\mathrm{SU}(n) \rightarrow \mathrm{SL}_n(C)$ is a weak equivalence, hence induces an isomorphism in cohomology, and since $\omega$ only depends on the projection on $i\fsu_n$, the restriction of $\varpi$ to $\mathrm{SU}(n)$ is the trivial form. So to check the equality we only need to show that after composing with the map induced by the section $s_\rho$ both sides of the equation agree.  Recall that by construction $(\omega, \{\beta_i\})$ is a relative cocycle that represents the hyperbolic form in $\HH^3_c(\mathrm{SL}_n(\C),\{B\})$. Hence by the discussion on the map $D$ the class $\rho^\ast(\operatorname{vol}_{\mathcal{H},\partial}) $ is represented by the cocycle $D^\ast((\omega,\{\beta_i\}))$.

To finish the proof it is enough to show  that we have a commutative diagram: 
\[
\xymatrix{
C^3(\fsl_n,\{\fb\}) \ar[r]^{\operatorname{Char}_{\nabla_\rho,{\nabla\vert_{\partial M_i}}}} \ar[d] & \Omega^3_{dR}(E_\rho,\partial E_\rho) \ar[r]^-{\Theta_\rho^\ast} \ar[dr]_{s_\rho^\ast}&   \Omega^3_{dR}( M \times \mathrm{SL}_n(\C), \{\partial M_i \times B_i\}) \ar[d]^ {s^\ast} \\ 
\Omega^3_{dR}(\mathrm{SL}_n(\C)/\mathrm{SU}(n), \{B_i/T_i\}) \ar[rr] & & \Omega^3_{dR}(M,\partial M) 
}
\]
where the bottom row is induced by $D$ and the quasi-isomorphisms $\Omega_{dR}^\ast(\widetilde{M}) ^\pi \rightarrow \Omega_{dR}^\ast(M)$ and $\Omega_{dR}^\ast(\widetilde{\partial M_i})^{\pi_1(\partial M_i)} \rightarrow \Omega_{dR}^\ast(\partial M_i)$.

As this is a  diagram on the chain level in relative cohomology, it is enough to  check that the corresponding absolute maps yield commutative diagrams and are compatible, i.e. for the absolute part, 

\[
\xymatrix{
C^3(\fsl_n) \ar[r]^{\operatorname{Char}_{\nabla_\rho}} \ar[d] & \Omega^3_{dR}(E_\rho) \ar[r]^-{\Theta_\rho^\ast} \ar[dr]_{s_\rho^\ast}&   \Omega^3_{dR}( M \times \mathrm{SL}_n(\C)) \ar[d]^ {s^\ast} \\ 
\Omega^3_{dR}(\mathrm{SL}_n(\C)/\mathrm{SU}(n)) \ar[r]^-{D^\ast} &  \Omega^3_{dR}(\widetilde{M})^{\pi} \ar[r] & \Omega_{dR}^3(M)
}
\]
and analogously for the relative part:
\[
\xymatrix{
C^2(\fb_i) \ar[r]^{\operatorname{Char}_{\nabla\vert_{\partial M_i}}} \ar[d] & \Omega^2_{dR}(\partial E_\rho) \ar[r]^-{\Theta_\rho^\ast} \ar[dr]_{s_\rho^\ast}&   \Omega^2_{dR}( \partial M_i \times B) \ar[d]^ {s^\ast} \\ 
\Omega^2_{dR}(B_i/T_i) \ar[r]^-{D^\ast} &  \Omega^2_{dR}(\widetilde{\partial M})^{\pi_1(\partial{M})} \ar[r] & \Omega_{dR}^2(\partial M)
}
\]
Both the proof of commutativity of the diagrams  and the compatibility are now elementary diagram chases.
\end{proof}

\section{The variation formula}\label{sec:varformula}

We are now ready to collect our efforts; but first a word of caution on the 
smoothness of the variety of representations.
The algebraic variety $\mathrm{Hom}(\pi_1M,\mathrm{SL}_n(\C)$ is not diferentiable in 
general, in fact for $M$ compact the singularities that 
appear can be as wild as possible, for a discussion of the singularities see for 
instance~\cite{MR3654101}. Nevertheless, by Whitney's theorem 
the algebraic variety $\mathrm{Hom}(\pi_1 M,\mathrm{SL}_n(\C))$ is generically 
smooth (i.e. the non-smooth locus is of Lebesgue mesure zero). Even restricted 
to the smooth locus, the volume function itself is not everywhere differentiable as is 
transparent from previous work of Neumann-Zagier. More precisely let us check: 

\begin{lemma}\cite{NZ}\label{lem:volnondiff}
 	For $n=2$ and a manifold with a single boundary component, the volume function is not differentiable at the defining representation.
\end{lemma}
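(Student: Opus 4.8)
The plan is to make the statement quantitative by specializing Theorem~\ref{Theorem:main} to $n=2$ and a single cusp, and then to read off the Neumann--Zagier behaviour of the volume near the complete structure. For $n=2$ the matrices $a(t),b(t)\in\fsl_2(\C)$ of \eqref{eqn:LieKolchin} are upper triangular, so $a=\left(\begin{smallmatrix} a_0 & * \\ 0 & -a_0\end{smallmatrix}\right)$ and $b=\left(\begin{smallmatrix} b_0 & * \\ 0 & -b_0\end{smallmatrix}\right)$. Since a product of two strictly upper triangular $2\times2$ matrices vanishes, only the diagonal entries survive in the trace, and the formula of Theorem~\ref{Theorem:main} collapses to
\[
\frac{d\phantom t}{dt}\operatorname{vol}(M,\rho_t)=2\,\Re(b_0)\,\Im(\dot a_0)-2\,\Re(a_0)\,\Im(\dot b_0).
\]
Writing $u=2b_0$ and $v=2a_0$ for the logarithmic holonomies of the meridian $m_1$ and the longitude $l_1$ (so that $e^{u/2},e^{v/2}$ are the eigenvalues of $\rho(m_1),\rho(l_1)$), this becomes $\tfrac{d}{dt}\operatorname{vol}=\tfrac12(\Re u\,\Im\dot v-\Re v\,\Im\dot u)$, which is exactly Hodgson's formula in the Dehn filling space.

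Next I would invoke the Neumann--Zagier parametrization \cite{NZ}: on the geometric component the pair $(u,v)$ is governed by an analytic relation $v=\tau\,u+O(u^3)$, where $\tau$ is the cusp shape of the complete structure, so that $\Im\tau>0$. Substituting $\dot v=\tau\dot u+O(u^2)$ and $v=\tau u+O(u^3)$ into the displayed formula and writing $u=x+iy$, $\tau=\tau_1+i\tau_2$, a direct computation gives $\tfrac{d}{dt}\operatorname{vol}=\tfrac{\tau_2}{2}(x\dot x+y\dot y)+o(|u|\,|\dot u|)$, hence
\[
\operatorname{Vol}(\rho_u)=\operatorname{Vol}(\rho_0)+\tfrac{\tau_2}{4}\,|u|^2+o(|u|^2).
\]
Thus the volume is a perfectly smooth function of the holonomy parameter $u$, and the entire obstruction to differentiability on $\hom(\pi_1M,\mathrm{SL}_2(\C))$ must come from the way $u$ sits over the variety.

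Finally I would exploit that the defining representation is peripherally parabolic. The regular function $x:=\tr\rho(m_1)$ on $\hom(\pi_1M,\mathrm{SL}_2(\C))$ satisfies $x=2\cosh(u/2)$ and takes the value $2$ at $\rho_0$, so $u=2\operatorname{arccosh}(x/2)$ is, up to a nonvanishing analytic factor, a square root $u\sim 2\sqrt{x-2}$ of $x-2$; consequently $|u|^2\sim 4\,|x-2|$ and the estimate above becomes $\operatorname{Vol}=\operatorname{Vol}(\rho_0)+\tau_2\,|x-2|+o(|x-2|)$. Comparing the two real rays emanating from $\rho_0$ — deformations with $\rho_t(m_1)$ loxodromic (real $u$, so $x>2$, a shrinking geodesic core) and with $\rho_t(m_1)$ elliptic (purely imaginary $u$, so $x<2$) — the volume has one-sided derivatives $+\tau_2$ and $-\tau_2$ with respect to $x$ at $x=2$. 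Since $\tau_2=\Im\tau\neq0$, these disagree and $\operatorname{Vol}$ fails to be differentiable at $\rho_0$. The delicate point, and the one I expect to be the main obstacle, is precisely this last juncture: the volume is smooth in the eigenvalue coordinate $u$, but $u$ is only a branched square root of the ambient regular function $x-2$, so that in any chart of the representation variety expressed through the matrix entries — equivalently the traces — the volume carries the conical term $|x-2|$; verifying $\Im\tau>0$ (automatic for a genuine cusp) then closes the argument and, a posteriori, shows why the differentiability of $a_i(t),b_i(t)$ in Theorem~\ref{Theorem:main} cannot be dispensed with.
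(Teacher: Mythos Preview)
Your argument is correct and follows essentially the same path as the paper: both obtain an expansion $\operatorname{vol}(\rho_u)=\operatorname{vol}(\rho_0)+c\,|u|^2+O(|u|^4)$ with $c$ a nonzero multiple of $\Im\tau$ from the Neumann--Zagier relation $v=\tau u+O(u^3)$, and then pass to the trace coordinate $z=2\cosh(u/2)-2$ on the character variety to exhibit a non-differentiable $|z|$-term. The one difference is that the paper simply quotes the volume expansion from \cite{NZ}, whereas you rederive it by integrating Theorem~\ref{Theorem:main} for $n=2$; this is not circular, since the proof of Theorem~\ref{Theorem:main} nowhere uses the lemma, but it does invert the intended expository order (the lemma is there precisely to motivate the differentiability hypothesis on $a_i(t),b_i(t)$ in Theorem~\ref{Theorem:main}).
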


Recall that the defining representation is the one corresponding to the complete 
hyperbolic structure on the interior of $M$. In \cite{NZ}, Neumann and Zagier 
use a parameter $u \in \C$ in a neighbohood of the origin to parametrize a 
neighborhood of the complete structure in the moduli space of hyperbolic ideal 
triangulations. As noticed in their work~\cite{NZ}, $u$ and $-u$ correspond to 
se same hyperbolic metric on the interior of $M$. In fact $\mathrm{Hom}(\pi_1 
M,\mathrm{SL}_2(\C))/SL_2(\C)$ is locally parametrized by
\[
\operatorname{trace}(\rho_u(l)) = \pm 2\cosh(u/2) = \pm (2 + u^2/4 + O(u^4)),
\]
where $\rho_u$ denotes the holonomy of the structure with parameter $u$. In 
particular $\rho_0$ is the defining representation. Then Neumann-Zagier define 
an analytic function $v(u)$ such that $\operatorname{trace}(\rho_u(m))= \pm 
2\cosh(v/2)$ and prove that $v= \tau u + O(u^3)$, where $\tau \in \C$ is the so 
called cusp length with $\Im(\tau)>0$, and
\[
\operatorname{vol}(\rho_u) = \operatorname{vol}(\rho_0) + 
\frac{1}{4}\Im(u\overline{v}) + O(|u|^4) = \operatorname{vol}(\rho_0) + 
\frac{1}{4}\Im(\tau)|u|^2 + O(|u|^4). 
\]
Thus, by choosing a local parameter $z=2\cosh(u/2) -2 = u^2/4 + O(u^4)$ in a 
neighborhood of the origin, the volume function has an expansion of the form:
\[
\operatorname{vol}(\rho_u)-\operatorname{vol}(\rho_0) = - \Im(\tau)|z| + O(|z|^2).
\]
Hence the volume is not a differentiable function on $\mathrm{Hom}(\pi_1 
M,\mathrm{SL}_2(\C))/\mathrm{SL}_2(\C)$, as $z \mapsto |z|$ is not 
differentiable at $z=0$. Neither is the volume differentiable on the variety of 
representations, because the projection $\mathrm{Hom}(\pi_1 M,\mathrm{SL}_2(\C)) 
\rightarrow \mathrm{Hom}(\pi_1 M,\mathrm{SL}_2(\C))/\mathrm{SL}_2(\C)$ is a 
fibration in a neighborhood of $\rho_0$.

This being said, let us go back now to our variation formula.
The following two subsections conclude the proof of the main theorem. 

\subsection{The variation comes from the boundary}
 
Recall that  the group $\mathrm{SU}(n)$ 
acts transitively by conjugation on the set of Borel subgroups of 
$\mathrm{SL}_n(\C)$.  Then by uniqueness of the trivialization $\beta$ proved in 
Proposition~\ref{prop:trivsurB}, the trivializations of the volume form on two 
different Borel subgroups $B_1$ and $B_2$, say  $\beta_1$ and $\beta_2$, are 
compatible in the sense of that if  $H \in \mathrm{SU}(n)$ is chosen such that 
$HB_2H^{-1} =B_1$, then for any $b,b' \in \fb_2$, $\beta_2(b,b') = 
\beta_1(\operatorname{Ad}_{H}b,\operatorname{Ad}_{H}b')= 
\operatorname{Ad}_H^\ast(\beta_1)$.

Let $\rho_t: \pi_1(M) \rightarrow \mathrm{SL}_n(\C)$ be a differentiable family of 
representations. As we discussed before, we may think of the associated flat 
bundles $E_{\rho_t}$  as being the flat bundle $E_{\rho_0}$ but with a varying 
family of connections $\nabla_t$. The uniqueness property discussed in the 
previous paragraph is  precisely the coherence requirement of 
Definition~\ref{def coherencecond} with respect to the subgroup $H=\mathrm{SU}(n)$. 
Consistent with our conventions at the end of  Section~\ref{subsec varchaclass}, we will decorate with a subscript as in 
$\HH^\ast_{\mathrm{SU}(n)}$ the cohomology of the complexes  
$C^\ast_{\mathrm{SU}(n)}(\mathfrak{g};\mathbb{R})$, etc. defined in 
Notation~\ref{not:relcochcoplx} and Definition~\ref{def relcohcoxes}
 of Section~\ref{subsec varchaclass}.

We can now apply the results of Section~\ref{subsec volascharac} to compute the variation of the volume. By the construction of the factorization of the variation map:
\[
\xymatrix{
\HH^\ast_{\mathrm{SU}(n)}(\fsu_n,\{\fb\}) \ar[r]^-{\operatorname{var}} \ar@/_{1pc}/[drr]_{\operatorname{Var}}&  \HH^{\ast-1}_{\mathrm{SU}(n)}(\fsu_n,\{\fb\}; \fsu^\vee_n,\{\fb^\vee\})\ar[r]^-{\Fu}  &  \HH^\ast_{\mathrm{SU}(n)}(\widetilde{\fsu}_n,\{\widetilde{\fsu}_n\})\ar[d]^{\operatorname{Char}_{\nabla_t,\{\widetilde{\nabla}_t\}}} \\
& &  \HH^\ast_{dR}(M,\partial M)
}
\]
we have a commutative diagram
\[
\xymatrix{
\HH^2_{\mathrm{SU}(n)}(\fsl_n;\mathbb{R}) \ar[r] \ar[d]^{\operatorname{var}} & \prod \HH^{2}(\fb;\mathbb{R}) \ar[r] \ar[d]^{\operatorname{var}} & \HH^3_{\mathrm{SU}(n)}(\fsl_n, \{\fb\};\mathbb{R}) \ar[r] \ar[d]^{\operatorname{var}} & \HH^3_{\mathrm{SU}(n)}(\fsl_n;\mathbb{R})  \ar[d]^{\operatorname{var}} & \\
\HH^1_{\mathrm{SU}(n)}(\fsl_n;\fsl_n^\vee) \ar[r] \ar[d] & \prod \HH^{1}(\fb;\fb^\vee) \ar[r] \ar[d] & \HH^2_{\mathrm{SU}(n)}(\fsl_n, \{\fb\};\fsl_n^\vee,\{\fb^\vee\}) \ar[r] \ar[d] & \HH^2_{\mathrm{SU}(n)}(\fsl_n;\fsl_n^\vee) \ar[d] &   \\
\HH^2(M;\mathbb{R}) \ar[r] & \prod \HH^{2}(\partial{M}) \ar[r] & \HH^3(M,\partial M) \ar[r] & \HH^ 3(M) \simeq 0 & 
}
\]

Let us recall the following lemma by Cartier~\cite[Lemme 1]{cartier}:

\begin{lemma}\label{lem:cartsemsimple}
Let $V$ be a vector space on a field $k$ and $A$ be a family of endomorphisms of $V$. Assume that $V$ is completely reducible. Denote by $V^\sharp$ the subspace of those vectors annihilated by all the $X \in A$. and by $V^0$ the subspace generated by the vectors $Xv$ ($X \in A, v \in V$).
\begin{enumerate}
\item $V = V^\sharp \oplus V^0.$
\item If $V$ is equipped with a differential $d$ that commutes to the $X \in A$, and such that $Xv$ is a boundary if $v$ is a cycle, then the homology with respect to this boundary gives $H(V) \simeq H(V^\sharp).$
\end{enumerate}
\end{lemma}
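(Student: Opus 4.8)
The plan is to read ``completely reducible'' as the statement that $V$ is a semisimple module over the unital associative subalgebra $\mathcal{A} \subseteq \operatorname{End}(V)$ generated by $A$ (in the intended application $A=\mathfrak{g}$ acts on a finite dimensional module and semisimplicity is Weyl's theorem). The first thing I would record is that both $V^\sharp = \bigcap_{X \in A}\ker X$ and $V^0 = \sum_{X \in A} XV$ are $\mathcal{A}$-submodules: for $v \in V^\sharp$ and $Y \in A$ one has $Yv = 0 \in V^\sharp$, while for $X,Y\in A$ one has $Y(Xv) \in YV \subseteq V^0$. Note also that $A$ annihilates $V^\sharp$, so that $V^\sharp \cap V^0$ is the object to control. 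The whole argument then rests on the fact that, by complete reducibility, every $\mathcal{A}$-submodule admits an $\mathcal{A}$-linear complement, hence an $\mathcal{A}$-linear idempotent projection.

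For part (1) I would first choose an $\mathcal{A}$-linear projection $\pi \colon V \to V^\sharp$ with kernel an $\mathcal{A}$-submodule $W$, so that $V = V^\sharp \oplus W$. Since $\pi$ is $\mathcal{A}$-linear and $A$ kills $V^\sharp$, for $X \in A$ and $v \in V$ we get $\pi(Xv) = X\pi(v) = 0$, whence $V^0 \subseteq W$. For the reverse inclusion I would split the semisimple module $W$ as $W = V^0 \oplus C$ with $\mathcal{A}$-linear projection $q \colon W \to C$ of kernel $V^0$; then for $c \in C$ and $X \in A$ the element $Xc$ lies in $V^0 = \ker q$, while $\mathcal{A}$-linearity gives $q(Xc) = Xq(c) = Xc$, so $Xc = 0$. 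Thus $C \subseteq V^\sharp \cap W = 0$, forcing $W = V^0$ and $V = V^\sharp \oplus V^0$. I expect this spanning direction $W \subseteq V^0$ to be the main obstacle: it is precisely where one must invoke complete reducibility a \emph{second} time (the splitting of $W$), rather than merely the first splitting of $V$.

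For part (2), I would begin by observing that a differential $d$ commuting with every $X \in A$ is $\mathcal{A}$-linear and preserves the splitting of (1): indeed $d(V^\sharp) \subseteq V^\sharp$ because $X(dv) = d(Xv) = 0$ for $v \in V^\sharp$, and $d(V^0) \subseteq V^0$ because $d(Xu) = X(du) \in V^0$. Hence $V = V^\sharp \oplus V^0$ is a direct sum of subcomplexes, $H(V) = H(V^\sharp) \oplus H(V^0)$, and it suffices to prove $H(V^0) = 0$. Here the coherence hypothesis enters: for a cycle $w \in V^0$ and $X \in A$, the element $Xw$ is a boundary (and lies in $V^0$, since $d$ respects the splitting), so each $X$ acts as $0$ on $H(V^0)$. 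Finally I would represent $H(V^0)$ by an $\mathcal{A}$-linear complement $H'_0$ of the boundaries inside the cycles of the subcomplex $V^0$, so that $H'_0 \cong H(V^0)$ as $\mathcal{A}$-modules; since $A$ acts as $0$ on $H'_0$ one gets $H'_0 \subseteq V^\sharp$, while $H'_0 \subseteq V^0$ by construction, whence $H'_0 \subseteq V^\sharp \cap V^0 = 0$ by part (1). Therefore $H(V^0) = 0$, and the inclusion $V^\sharp \hookrightarrow V$ induces the desired isomorphism $H(V^\sharp) \xrightarrow{\sim} H(V)$.
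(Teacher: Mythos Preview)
The paper does not actually prove this lemma: it is quoted verbatim from Cartier~\cite[Lemme~1]{cartier} and used as a black box in the proof of Corollary~\ref{cor annulcohoslsldual}. So there is no in-paper argument to compare against.

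Your proof is correct. The interpretation of ``completely reducible'' as semisimplicity over the unital subalgebra $\mathcal{A}\subseteq\operatorname{End}(V)$ generated by $A$ is the right one, and matches how the paper applies the lemma (there $A$ consists of the operators $g-\operatorname{Id}$ for $g\in\mathrm{SU}(n)$, and compactness of $\mathrm{SU}(n)$ gives semisimplicity). Both parts are handled cleanly: the double use of complete reducibility in part~(1)---once to split off $V^\sharp$, once to split $V^0$ inside the complement---is exactly what is needed, and your observation that $C\subseteq V^\sharp\cap W=0$ closes the argument. In part~(2) the only point worth a second look is that ``$Xw$ is a boundary'' a priori means a boundary in $V$, not in $V^0$; but since $d$ respects the decomposition $V=V^\sharp\oplus V^0$ and $Xw\in V^0$, projecting a primitive onto $V^0$ gives a primitive in $V^0$, so this is fine. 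Your final step, realizing $H(V^0)$ as an $\mathcal{A}$-stable complement $H'_0$ inside the cycles and then trapping it in $V^\sharp\cap V^0=0$, is a nice way to finish.
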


\begin{corollary}\label{cor annulcohoslsldual}
For $\ast \geq 1$ and any $n \geq 2$, 
\[
\HH^\ast(\fsl_n;\fsl_n^\vee) \simeq 0 \simeq \HH^\ast_{\mathrm{SU}(n)}(\fsl_n;\fsl_n^\vee).
\]
\end{corollary}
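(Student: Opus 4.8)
The plan is to obtain both vanishing statements from Cartier's Lemma~\ref{lem:cartsemsimple}, applied to the Chevalley--Eilenberg complex computing $\HH^\ast(\fsl_n;\fsl_n^\vee)$. I would first reduce the second isomorphism to the first. Since $\mathrm{SU}(n)$ is compact, Haar averaging provides a natural projection onto the invariants and makes the ``take $\mathrm{SU}(n)$-invariants'' functor exact on the finite-dimensional representations $C^n(\fsl_n;\fsl_n^\vee)$; hence invariants commute with cohomology and $\HH^\ast_{\mathrm{SU}(n)}(\fsl_n;\fsl_n^\vee)\cong\HH^\ast(\fsl_n;\fsl_n^\vee)^{\mathrm{SU}(n)}$. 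This vanishes as soon as $\HH^\ast(\fsl_n;\fsl_n^\vee)$ does, so it suffices to treat the absolute group.

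The decisive structural input is that $\fsl_n$, regarded as a real Lie algebra, is semisimple (in fact simple). By Weyl's theorem every finite-dimensional $\fsl_n$-module is completely reducible, and the coadjoint module $\fsl_n^\vee$ has \emph{no} nonzero invariants: an invariant functional $\phi$ satisfies $\phi([X,Y])=0$ for all $X,Y$, hence vanishes on $[\fsl_n,\fsl_n]=\fsl_n$, so $(\fsl_n^\vee)^{\fsl_n}\cong Z(\fsl_n)^\vee=0$.

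Next I would apply Cartier's Lemma~\ref{lem:cartsemsimple}(2) with $V=C^\ast(\fsl_n;\fsl_n^\vee)$ the Chevalley--Eilenberg cochain complex, $d$ its differential, and $A=\fsl_n$ acting by Lie derivatives $L_X$. All three hypotheses hold: each $C^n$ is finite dimensional, hence completely reducible as an $\fsl_n$-module by Weyl's theorem; the infinitesimal action is by chain maps, so $d$ commutes with every $L_X$; and Cartan's magic formula $L_X=d\iota_X+\iota_X d$ shows that $L_X v=d(\iota_X v)$ is a boundary whenever $v$ is a cocycle. The lemma then yields $\HH^\ast(\fsl_n;\fsl_n^\vee)\cong H(V^\sharp)$, where $V^\sharp=C^\ast(\fsl_n;\fsl_n^\vee)^{\fsl_n}$ is the subcomplex of invariant cochains.

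Finally I would show that $V^\sharp$ is acyclic in positive degrees, and this is where the absence of invariants is decisive: the Chevalley--Eilenberg computation for semisimple Lie algebras identifies the cohomology of the invariant complex with $\HH^\ast(\fsl_n)\otimes(\fsl_n^\vee)^{\fsl_n}$, which is $0$ by the second paragraph. I expect this last identification to be the main obstacle, because the invariant complex $\operatorname{Hom}_{\fsl_n}(\bigwedge^\ast\fsl_n,\fsl_n^\vee)$ is itself nonzero -- under $\fsl_n^\vee\cong\fsl_n$ the identity map is an invariant $1$-cochain whose differential is the nonzero bracket $(X,Y)\mapsto[X,Y]$ -- so one genuinely has to establish that its differential leaves no cohomology. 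This can be done either by quoting the reductive-coefficients theorem, or by splitting off the (here trivial) invariant part $(\fsl_n^\vee)^{\fsl_n}$ of the coefficients and verifying directly that the complementary summand contributes nothing.
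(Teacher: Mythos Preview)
Your reduction of the second isomorphism to the first is correct, and in fact a bit cleaner than what the paper does: the paper re-applies Cartier's Lemma with the family $A=\mathrm{SU}(n)-\operatorname{Id}$ acting on the (already acyclic) complex, whereas you simply use that taking invariants under a compact group is exact on finite-dimensional representations, so that $\HH^\ast_{\mathrm{SU}(n)}(\fsl_n;\fsl_n^\vee)\cong\HH^\ast(\fsl_n;\fsl_n^\vee)^{\mathrm{SU}(n)}$. Either way Part~2 follows from Part~1.

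The genuine gap is in Part~1. Your application of Cartier's Lemma with $A=\fsl_n$ and the Cartan identity $L_X=d\iota_X+\iota_X d$ is fine and yields $\HH^\ast(\fsl_n;\fsl_n^\vee)\cong H(V^\sharp)$, but then you must compute $H(V^\sharp)$, and this is where the argument stops. The claimed identification $H(V^\sharp)\cong\HH^\ast(\fsl_n)\otimes(\fsl_n^\vee)^{\fsl_n}$ is not a formality: as you yourself observe, $V^\sharp$ contains nonzero cochains with nonzero differential, so one really has to analyse that differential. Your two proposed fixes do not close the gap. ``Quoting the reductive-coefficients theorem'' \emph{is} Cartier's theorem --- the statement $\HH^\ast(\fg;M)\cong\HH^\ast(\fg)\otimes M^{\fg}$ for semisimple $\fg$ is precisely the ``direct application that Cartier makes of his lemma'' that the paper cites --- so invoking it makes the whole detour through $V^\sharp$ redundant. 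And ``splitting off the invariant part of the coefficients'' is vacuous here, since $(\fsl_n^\vee)^{\fsl_n}=0$ leaves you with the full module and the full problem.

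The paper's proof simply cites Cartier's result for the first vanishing and then deduces the second from it. If you want to make your route self-contained, the missing ingredient is the Casimir argument underlying Cartier's application: the Casimir acts on the irreducible module $\fsl_n^\vee$ by a nonzero scalar, commutes with the Chevalley--Eilenberg differential, and is chain-null-homotopic, forcing the whole complex to be acyclic in positive degrees.
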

\begin{proof}
That $\HH^\ast(\fsl_n;\fsl_n^\vee) \simeq 0$ is the direct application that 
Cartier makes of his lemma, given that $\fsu_n$ is semi-simple.

For the second isomorphism, we apply Lemma~\ref{lem:cartsemsimple} to the (acyclic!) complex $V= C^\ast(\fsl_n;\fsl_n^\vee)$ viewed as a (graded) vector space acted upon by $SU(n)$. Since $\mathrm{SU}(n)$ is compact then $V$ is indeed completely reducible. 
Moreover, by functoriality of the complex, its differential commutes with the 
action of the elements in $\mathrm{SU}(n)-\operatorname{Id}$. If $v$ is a cycle, 
and $X \in \mathrm{SU}(n)-\operatorname{Id}$, then $Xv$ is a cycle, and by 
acyclicity of this complex, it is a boundary. Observe that being annihilated by 
$A-\operatorname{Id} $ is the same as being fixed by $A$, hence 
Lemma~\ref{lem:cartsemsimple} tells us that the embedding 
$C^\ast_{\mathrm{SU}(n)}(\fsl_n;\fsl_n^\vee) \hookrightarrow 
C^\ast(\fsl_n;\fsl_n^\vee)$ is a quasi-isomorphism.
\end{proof}

As a consequence our diagram above boils down to:
\[
\xymatrix{
 & \prod \HH^{2}(\fb;\mathbb{R}) \ar[r] \ar[d]^{\operatorname{var}} & 
\HH^3_{\mathrm{SU}(n)}(\fsl_n, \{\fb\};\mathbb{R}) \ar[r] \ar[d]^{\operatorname{var}} & 
\HH^3_{\mathrm{SU}(n)}(\fsl_n;\mathbb{R})  \ar[d]^{\operatorname{var}}  \\
0 \ar[d] \ar[r] & \prod \HH^{1}(\fb;\fb^\vee) \ar[r] \ar[d] & 
\HH^2_{\mathrm{SU}(n)}(\fsl_n, \{\fb\};\fsl_n^\vee,\{\fb^\vee\}) \ar[r] \ar[d] & 
0 \ar[d]    \\
\HH^2(M;\mathbb{R}) \ar[r] & \prod \HH^{2}(\partial{M}) \ar[r] & 
\HH^3(M,\partial M) \ar[r] & 0 
}
\]

In particular, the variation of the volume class is the image of a cohomology 
class in $\prod \HH^2(\partial M)$. To see which one we have to find an inverse 
to the isomorphism:
\[
\xymatrix{
\prod \HH^{1}(\fb;\fb^\vee) \ar[r]  & \HH^2_{\mathrm{SU}(n)}(\fsl_n, 
\{\fb\};\fsl_n^\vee,\{\fb^\vee\}).
}
\]
Unraveling the definitions it is given by the following construction.
The  map
\[
\xymatrix{
 C^2_{\mathrm{SU}(n)}(\fsl_n, \{\fb\};\fsl_n^\vee,\{\fb^\vee\}) \rightarrow C^2_{\mathrm{SU}(n)}(\fsl_n, ;\fsl_n^\vee).
}
\]
simply forgets the relative part, and acyclicity on the right hand side means 
that the absolute part $\operatorname{var}(\varpi)$ of the relative cocycle 
$\operatorname{var}(\varpi,\{\beta\})$ is 
a coboundary, say $\operatorname{var}(\alpha) = d\gamma$.
Then the preimage of $\operatorname{var}(\varpi,\{\beta\})$ in $\prod \HH^{1}(\fb;\fb^\vee)$ is 
given by the class of the family $\operatorname{var}(\beta) - i^\ast \gamma$, where $i^\ast$ is the 
map induced by the inclusion $\fb \rightarrow \fsu$.

\begin{lemma}\label{lem:trivvarvol}
The image of $\varpi$, the absolute part of the volume cocycle, under the map $\operatorname{var}\colon  C^3_{\mathrm{SU}(n)}(\fsl_n;\mathbb{R}) \rightarrow C^2(\fsl_n;\fsl_n^\vee)$  is  the coboundary of the cochain:
\[
\begin{array}{rcl}
\gamma\colon \fsl_n & \rightarrow &\fsl_n^\vee \\
g & \mapsto & h \leadsto  i\tr(pr_{i\fsu_n}(g)pr_{\fsu_n}(h)).
\end{array}
\]
where $pr_{\fsu_n}\colon  \fsl_n \rightarrow \fsu_n$ and $pr_{i\fsu_n} \colon  \fsl_n \rightarrow i\fsu_n$ are the canonical projections associated to the orthogonal decomposition $\fsl_n = \fsu_n \oplus i \fsu_n$.
\end{lemma}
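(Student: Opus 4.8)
The plan is to verify the claimed identity $\operatorname{var}(\varpi)=d\gamma$ by a direct cochain-level computation, using only the explicit formula for $\varpi$, the definition of the Chevalley--Eilenberg differential with coefficients in $\fsl_n^\vee$, and the bracket decompositions \eqref{eq:brackreel}--\eqref{eq:brackim}. First I would unwind the two sides. By definition of $\operatorname{var}$, for $g_1,g_2,h\in\fsl_n$ one has $\operatorname{var}(\varpi)(g_1,g_2)(h)=\varpi(g_1,g_2,h)=2i\tr(pr_{i\fsu}(g_1)[pr_{i\fsu}(g_2),pr_{i\fsu}(h)])$. On the other hand, since $\gamma\in C^1(\fsl_n;\fsl_n^\vee)$ and the module structure is $(g\phi)(h)=-\phi([g,h])$, the Chevalley--Eilenberg differential reads
\[
(d\gamma)(g_1,g_2)(h)=-\gamma(g_2)([g_1,h])+\gamma(g_1)([g_2,h])-\gamma([g_1,g_2])(h).
\]

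Next, writing $x_+=pr_{\fsu}(x)$ and $x_-=pr_{i\fsu}(x)$ for the two components of $x\in\fsl_n$, I would substitute $\gamma(g)(h)=i\tr(g_-\,h_+)$ and expand each of the three terms using \eqref{eq:brackreel} for $pr_{\fsu}[g_i,h]$ and \eqref{eq:brackim} for $pr_{i\fsu}[g_1,g_2]$. Because $[i\fsu,i\fsu]\subset\fsu$ and $[\fsu,\fsu]\subset\fsu$ while $[\fsu,i\fsu]\subset i\fsu$, this produces exactly six trace terms, two of which are \emph{pure} (involving only the $i\fsu$-parts $g_{1-},g_{2-},h_-$) and four of which are \emph{mixed}.

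The heart of the argument is then to observe that the trilinear form $C(a,b,c)=\tr(a[b,c])$ is totally antisymmetric: this is the standard fact that $\tr(a[b,c])=\tr(abc)-\tr(acb)$ is alternating, which is precisely what makes the rescaled Cartan--Killing form a $3$-form. Using cyclicity of the trace to rewrite the term coming from $\gamma([g_1,g_2])$ and then the alternating property of $C$, I would check that the four mixed terms cancel in two pairs, while the two pure terms combine (after one transposition, which supplies the missing factor of $2$) into $2i\,C(g_{1-},g_{2-},h_-)=\varpi(g_1,g_2,h)$. This yields $(d\gamma)(g_1,g_2)(h)=\operatorname{var}(\varpi)(g_1,g_2)(h)$ for all $g_1,g_2,h$, which is the assertion.

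I expect the only delicate point to be the sign bookkeeping in the cancellation of the mixed terms: each mixed term must be brought to a common ordering of its three arguments via cyclicity of the trace and transpositions of $C$, and one has to track the resulting signs carefully to see the pairwise cancellation. No invariance hypothesis on $\gamma$ is needed for the identity itself, although $\gamma$ is automatically $\mathrm{Ad}_{\mathrm{SU}(n)}$-invariant since conjugation by a unitary matrix preserves the decomposition $\fsl_n=\fsu_n\oplus i\fsu_n$ and the trace, so that $d\gamma$ indeed lands in the invariant subcomplex where $\operatorname{var}(\varpi)$ lives.
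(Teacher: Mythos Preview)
Your proposal is correct and follows essentially the same route as the paper: both compute $(d\gamma)(g_1,g_2)(h)$ via the Chevalley--Eilenberg formula, expand the three resulting terms using the bracket decompositions \eqref{eq:brackreel}--\eqref{eq:brackim}, and then invoke the alternating property of $(A,B,C)\mapsto\tr(A[B,C])$ to cancel the four mixed terms and combine the two pure ones into $2i\tr(g_{1-}[g_{2-},h_-])=\varpi(g_1,g_2,h)$. Your explicit accounting of the six terms and the pairwise cancellation is slightly more detailed than the paper's write-up, but the argument is the same.
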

\begin{proof}
For $x_1,x_2\in \mathfrak{sl}_n $, 
$$ d (\gamma) (x_1, x_2)= x_1 \gamma(x_2)- x_2\gamma(x_1)-\gamma([x_1,x_2]).
 $$
Recall that for $\theta\in \mathfrak{g}^\vee$ and $x,y\in\mathfrak{g}$, we have $(x\theta)(y)=-\theta([x,y])$.
Hence, for  $x_1,x_2, x_3\in \mathfrak{sl}_n $, 
\begin{multline*}
  d (\gamma) (x_1, x_2) (x_3)= -\gamma(x_2)([x_1,x_3 ])+ \gamma(x_1 )([x_2,x_3 ])- \gamma([x_1,x_2])(x_3)\\
       =  i \tr \big(
	    -pr_{i\fsu_n}(x_2) pr_{\fsu_n}([x_1,x_3]) + pr_{i\fsu_n} (x_1) pr_{\fsu_n} ([x_2,x_3])- pr_{i\fsu_n} ([x_1,x_2])pr_{\fsu_n} (x_3) 
       \big)
\end{multline*}
Since $[\fsu_n,\fsu_n ]\subset \fsu_n$,
$[ i\fsu_n, i\fsu_n]\subset \fsu_n$, and 
$[i\fsu_n, \fsu_n ]\subset i\fsu_n$,
\begin{eqnarray*}
 \delta (\gamma) (x_1, x_2) (x_3)&=
       &  i \tr \big(  -pr_{i\fsu_n} (x_2) ([ pr_{\fsu_n} (x_1), pr_{\fsu_n} (x_3)]+[ pr_{i\fsu_n} (x_1,) pr_{i\fsu_n} (x_3)]) \\
      && \phantom{i\tr \big( }  +pr_{i\fsu_n} (x_1) ([ pr_{\fsu_n} (x_2), pr_{\fsu_n} (x_3)]+[ pr_{i\fsu_n} (x_2), pr_{i\fsu_n} (x_3)]) \\
      && \phantom{i\tr \big( } - ([pr_{\fsu_n} (x_1),pr_{i\fsu_n} (x_2)] -[  pr_{i\fsu_n} (x_1),pr_{\fsu_n} (x_2)]  )pr_{\fsu_n} (x_3) 
       \big)\\
      && = 2 i\tr ( pr_{i\fsu_n} (x_1)[pr_{i\fsu_n} (x_2),pr_{i\fsu_n} (x_3) ] ).
\end{eqnarray*}
Here we have used that $(A,B,C)\mapsto \tr (A[B,C])$ is alternating.
 \end{proof}

Each Borel Lie algebra $\fb_n$ fits into a split exact sequence of Lie algebras:

\[
\xymatrix{
0 \ar[r] & \mathfrak{ut}_n \ar[r] & \fb_n \ar[r] & \ft_n \ar[r] & 0.
}
\]

We have a splitting $\ft_n = \fh_n \oplus i\fh_n$. Denote by $pr_{\fh_n}$ (resp. $pr_{i\fh_n}$) the projection onto $\fh_n$ (resp $i\fh_n$).
\begin{proposition}\label{prop:cocyclevariation}

The variation of the volume of a representation is given by the sum over of the integral over each boundary component of $\partial {M} $ of the image of the cohomology class of the $1$-cocycle in $C^1(\fb_n;\fb_n^\vee)
$:
\[
\begin{array}{rcl}
\zeta\colon  \fb_n & \longrightarrow & \fb_n^\vee \\
x & \longmapsto & y \leadsto   i\tr(pr_{i\fh_n}(x)pr_{\fh_n}(y))
\end{array}
\]
under the map 
\[
\HH^1(\fb_n; \fb_n) \rightarrow \HH^2(\partial {M}).
\]
\end{proposition}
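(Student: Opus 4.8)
The plan is to pin down, for each boundary torus, the precise class in $\HH^1(\fb_n;\fb_n^\vee)$ that governs the variation, and to verify it is represented by $\zeta$. First I would invoke the reduced diagram produced above from Corollary~\ref{cor annulcohoslsldual}: since $\HH^1_{\mathrm{SU}(n)}(\fsl_n;\fsl_n^\vee)=0=\HH^2_{\mathrm{SU}(n)}(\fsl_n;\fsl_n^\vee)$, the variation class $\operatorname{var}(\varpi,\{\beta\})$ in $\HH^2_{\mathrm{SU}(n)}(\fsl_n,\{\fb\};\fsl_n^\vee,\{\fb^\vee\})$ is the image under the connecting map of a unique class in $\prod_i\HH^1(\fb_i;\fb_i^\vee)$, and the variation of the volume is the image of that class under the boundary instance of $\operatorname{Char}_{\widetilde\nabla_t}\circ\Fu$, landing in $\prod_i\HH^2(\partial M_i)$ and then in $\HH^3(M,\partial M)$. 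Because all Borel subalgebras are $\mathrm{SU}(n)$-conjugate and $\varpi,\beta,\gamma$ are defined $\mathrm{SU}(n)$-equivariantly, it suffices to treat the standard Borel $\fb_n$.

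Next I would use the explicit inverse to the connecting isomorphism recorded just before the statement. By Lemma~\ref{lem:trivvarvol} the absolute part satisfies $\operatorname{var}(\varpi)=d\gamma$ with $\gamma$ the $\mathrm{SU}(n)$-invariant cochain $g\mapsto\big(h\mapsto i\tr(pr_{i\fsu_n}(g)\,pr_{\fsu_n}(h))\big)$. Adding the cone coboundary $d_{\operatorname{Cone}}(0,\gamma)$ kills the absolute part of $\operatorname{var}(\varpi,\{\beta\})$ and leaves a relative cocycle supported on the boundary, namely $\operatorname{var}(\beta)-i^\ast\gamma\in C^1(\fb_n;\fb_n^\vee)$, where $i^\ast$ is restriction along $\fb_n\hookrightarrow\fsl_n$. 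So the entire statement reduces to computing $i^\ast\gamma$ and comparing it with $\operatorname{var}(\beta)$.

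The computational heart is to evaluate $i^\ast\gamma(x)(y)=i\tr(pr_{i\fsu_n}(x)\,pr_{\fsu_n}(y))$ for $x,y\in\fb_n$. Writing $x=x_d+x_u$ and $y=y_d+y_u$ with diagonal parts $x_d,y_d\in\fh_n\oplus i\fh_n$ and strictly upper triangular parts $x_u,y_u\in\fut_n$, and using $pr_{\fsu_n}(A)=\tfrac12(A-{}^t\overline{A})$, $pr_{i\fsu_n}(A)=\tfrac12(A+{}^t\overline{A})$, I would split the trace into diagonal and off-diagonal blocks. On diagonal matrices the projections restrict to $pr_{i\fsu_n}|_{\ft_n}=pr_{i\fh_n}$ and $pr_{\fsu_n}|_{\ft_n}=pr_{\fh_n}$, so the diagonal-diagonal term is exactly $\zeta(x)(y)=i\tr(pr_{i\fh_n}(x)\,pr_{\fh_n}(y))$; the mixed terms vanish because the trace of a diagonal matrix against a matrix with zero diagonal is $0$; and the strictly-triangular term collapses, using that products of strictly upper (resp. lower) triangular matrices are traceless, to $\tfrac{i}{4}\tr({}^t\overline{x_u}y_u-x_u\,{}^t\overline{y_u})=\beta(x,y)=\operatorname{var}(\beta)(x)(y)$. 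Hence $i^\ast\gamma=\zeta+\operatorname{var}(\beta)$, so that $\operatorname{var}(\beta)-i^\ast\gamma=-\zeta$; the overall sign is reconciled by the minus sign in the Stokes presentation of the volume in Definition~\ref{def:defvolplusprecis}, leaving $\zeta$ as the representative, and identifying the relevant map as $\HH^1(\fb_n;\fb_n^\vee)\to\HH^2(\partial M_i)$.

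I expect the main obstacle to be precisely this trace bookkeeping: cleanly isolating the three block-contributions, recognizing the off-diagonal block as $\beta$ and hence as $\operatorname{var}(\beta)$, and then tracking the coefficient and sign conventions through the cone differential and the connecting homomorphism so that the boundary representative emerges exactly as the cocycle $\zeta$ asserted in the statement. Everything else is a formal consequence of the functoriality of the cone construction and the vanishing in Corollary~\ref{cor annulcohoslsldual}.
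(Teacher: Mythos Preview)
Your proposal is correct and follows essentially the same route as the paper's proof: reduce to the boundary via the cone using $\operatorname{var}(\varpi)=d\gamma$ from Lemma~\ref{lem:trivvarvol}, then split $i^\ast\gamma(x)(y)$ along the decomposition $x=x_d+x_u$, $y=y_d+y_u$ to obtain $i^\ast\gamma=\zeta+\operatorname{var}(\beta)$, and absorb the resulting sign via the minus in the Stokes formula of Definition~\ref{def:defvolplusprecis}. The paper carries out exactly this computation with the same block bookkeeping and the same identification $\gamma(x_u)(y_u)=\beta(x,y)$.
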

\begin{proof}
As $\operatorname{var}(\varpi)$ is the coboundary of $\gamma$,
the cocycle $(\operatorname{var}(\varpi),\{\operatorname{var}(\beta_r)\}) $ is cohomologous to  $(0,\{\operatorname{var}(\beta_r)-i^*(\gamma) \})$. 
Therefore, as the integral on the  boundary $\partial M$ appears subtracting in Definition~\ref{def:defvolplusprecis}, the variation of volume is:
\begin{equation*}
 -\sum_{r=1}^k\int_{T_r}  ( s_\rho^\ast\circ\operatorname{Char}_{\nabla_t}\circ \Fu )    (\operatorname{var}(\beta_r)- i^*(\gamma) ) .
\end{equation*}
Hence we need to prove that $\zeta=i^*(\gamma)- \mathrm{var}(\beta)  $.
 Given  $x, y\in\mathfrak{b}_n$, write 
 $$x=x_d+x_u\qquad\textrm{ and }\quad y=y_d+y_u$$ with $x_u,\, y_u\in \mathfrak{u}_n$ and $x_d,y_d\in\mathfrak{h}+i\mathfrak{h}$ diagonal, their Chevalley-Jordan decomposition.
 Notice that $pr_{i\fsu_n}(x_d)=pr_{i\fh}(x)$ and $pr_{\fsu_n}(y_d)=pr_{\fh}(y)$ are diagonal, hence their product with elements of $\fut_n$ and 
  ${}^t\fut_n$ have trace zero. 
  Therefore:
  $$
  \gamma(x)(y)=i\tr(pr_{i\fh_n}(x)pr_{\fh_n}(y))+\gamma(x_u)(y_u)= \zeta(x)(y)+ \gamma(x_u)(y_u). 
  $$
  As $pr_{i\fsu}(x_u)=(x_u+{}^t\overline{x_u})/2$,  $pr_{\fsu}(y_u)=(y_u-{}^t\overline{y_u})/2$, and the trace vanishes on  $\mathfrak{u}_n$,
  $$
  \gamma(x_u)(y_u)= i \tr((x_u+{}^t\overline{x_u})(y_u-{}^t\overline{y_u}))/4= i \tr({}^t\overline{x_u}  y_u-x_u{}^t\overline{y_u})/4=\beta(x,y).
  $$
  Hence $i^*(\gamma)=\zeta+\operatorname{var}(\beta)$ as claimed.
\end{proof}

Observe that this form we have to integrate does only depend on the projection on $\fb_n/\mathfrak{ut}_n$. Recall that corresponding to the above split exact sequence of $\fb_n$ we have a split short exact sequence of Lie groups:
\[
\xymatrix{
1 \ar[r] & U_n \ar[r] & B_n \ar[r] & T_n\ar[r] & 1,
}
\]
where $U_n$ stands for the unipotent matrices, and the sequence is split by the semi-simple matrices in $B_n$.
Then the fact that the cochain $\zeta$ only depends on the projection onto 
$\ft_n$  means precisely that the variation of the volume depends on the 
restriction of the representation $\rho\colon P_i \rightarrow B_i$ only through its 
projection on $B_i/U_n$, a representation with values in an abelian group.

As an immediate corollary we have that if for each peripheral subgroup the restriction  of the representation $\rho$ take values in unipotent subgroups of $\mathrm{SL}_n(\C)$ and the deformation of $\rho$ is also boundary unipotent then the volume does not vary:

\begin{corollary}\label{cor:uniprepvolrigide}
The volume function restricted to the subspace of boundary unipotent representations is locally constant.
\end{corollary}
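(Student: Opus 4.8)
The plan is to read the statement off Proposition~\ref{prop:cocyclevariation}, which identifies the infinitesimal variation of the volume with the sum, over the boundary tori, of the integral of the image of the class $[\zeta]\in\HH^1(\fb_n;\fb_n^\vee)$ under the map $\HH^1(\fb_n;\fb_n^\vee)\to\HH^2(\partial M)$ induced by the boundary data, where $\zeta(x)(y)=i\tr(pr_{i\fh_n}(x)\,pr_{\fh_n}(y))$. The decisive feature of $\zeta$ is that both projections $pr_{i\fh_n}$ and $pr_{\fh_n}$ annihilate the ideal $\fut_n$ of strictly upper triangular matrices; hence $\zeta$ is inflated from the abelian quotient $\ft_n=\fb_n/\fut_n$, and $[\zeta]$ lies in the image of the inflation $\HH^1(\ft_n;\ft_n^\vee)\to\HH^1(\fb_n;\fb_n^\vee)$.

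First I would fix a differentiable path $\rho_t$ contained in the subspace of boundary unipotent representations and, for each boundary torus $T_i$ and each $t$, a Borel subgroup $B_i(t)$ with $\rho_t(P_i)\subset B_i(t)$; by hypothesis the image lies in the unipotent radical $U_n(t)$. Writing the holonomies along the ordered generators of $\pi_1(T_i)$ as $\rho_t(l_i)=\exp(a_i(t))$ and $\rho_t(m_i)=\exp(b_i(t))$, boundary unipotency means $a_i(t),b_i(t)\in\fut_i$ for every $t$, and since $\log$ is polynomial on unipotent matrices the dependence on $t$ is automatically differentiable, with $\dot a_i(t),\dot b_i(t)\in\fut_i$ as well.

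Next I would feed these values through the Fuks factorization of Theorem~\ref{them factVarmap}: the form integrated on $T_i$ is obtained by evaluating $\zeta$ on the boundary connection $\nabla_t$ and its variation $\dot\nabla_t$, that is, on the pairs $(a_i,\dot b_i)$ and $(b_i,\dot a_i)$. Since all four matrices lie in $\fut_i$, both $pr_{i\fh_n}$ and $pr_{\fh_n}$ vanish on them, so $\zeta$ is identically zero on this data. Equivalently, the inflation of $[\zeta]$ means the contribution factors through the projected peripheral representation $P_i\to B_i/U_n\cong T_i$ into an abelian group; for a boundary unipotent path this projected representation is the constant trivial one, so both it and its derivative vanish and the pulled-back class in $\HH^2(\partial M)$ is zero. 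Summing over $i$, Proposition~\ref{prop:cocyclevariation} yields $\frac{d}{dt}\operatorname{vol}(M,\rho_t)=0$ for every $t$, so the volume is constant along any such path and hence locally constant on the boundary unipotent locus.

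The point requiring the most care is the dependence of $B_i(t)$ on $t$: to apply Proposition~\ref{prop:cocyclevariation} one must verify the coherence condition of Definition~\ref{def coherencecond} with $H=\mathrm{SU}(n)$, replacing $\nabla_t|_{\partial M_i}$ by the conjugated connection $\widetilde\nabla_t=\operatorname{Ad}_{h_t}\nabla_t|_{\partial M_i}$ taking values in the fixed standard Borel. Since $h_t\in\mathrm{SU}(n)$ conjugates $B_i(t)$ to the standard $B$, it carries the unipotent radical to $\fut_n$, so the conjugated connection still takes values in $\fut_n$; as $\zeta$ is built from $\mathrm{SU}(n)$-equivariant projections, this adjustment introduces no diagonal component and the vanishing is preserved.
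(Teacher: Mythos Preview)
Your proof is correct and follows the same approach as the paper: the corollary is stated there as an immediate consequence of the observation, made just before it, that the cochain $\zeta$ depends only on the projection $\fb_n\to\fb_n/\fut_n$, so boundary unipotent data gives zero variation. Your write-up simply fills in the details the paper leaves implicit (differentiability of the logarithm on unipotents, the coherence condition); one small slip is the phrase ``$\mathrm{SU}(n)$-equivariant projections'' in the last paragraph---the projections $pr_{\fh_n},pr_{i\fh_n}$ are not $\mathrm{SU}(n)$-equivariant, but your conclusion stands anyway since conjugation by $h_t$ carries the unipotent radical of $B_i(t)$ to the standard $\fut_n$, where $\zeta$ vanishes.
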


%
%
%

We now turn to a more explicit formula for the variation of the volume as encoded on each torus.

\subsection{Deforming representations on the torus}

Let $\{\alpha,\beta\}$ be a generating set of the fundamental group of the 
2-torus $T^2=\mathbb{R}^2/\mathbb{Z}^2$. They act on the universal covering 
$\alpha,\beta\colon  \mathbb{R}^2\to  \mathbb{R}^2$ as 
the integer lattice of translations:
\(\alpha(x,y)=(x+1,y)$ and $\beta(x,y)=(x,y+1)\). 

By the Lie-Kolchin theorem, the image  $\rho(\pi_1(T^2))$ is contained in a Borel subgroup $B_n$ and up to conjugation we assume that its variation is contained in a fixed subgroup.
The class we want to evaluate vanishes in $\mathfrak{u}_n$, so we do not need to understand the whole perturbation of $\rho$ in $B_n$ but just its projection to
$\pi\colon B_n\to  B_n/U_n= \Delta_n \cong \mathbf (\C^*)^{n-1}$.
Write 
$$
\pi(\rho(\alpha))=\exp(a),\ \pi(\rho(\beta))=\exp(b) \in \Delta_n,
$$
where $a,b\in\mathfrak{sl}_n(\C)$ are diagonal matrices. Notice that there is an indeterminacy of the logarithm, the nontrivial entries (diagonal) of $a$ and $b$  are only well defined
up addition of a term in to $2\pi i \,\Z $, but this does not affect the final result.

Since $\Delta_n$ is abelian, for such a representation we have a $\rho$-equivariant map
$$
\begin{array}{rcl}
  D\colon \mathbb{R}^2 & \to & B_n/U_n \\
    (x,y) & \mapsto & \exp(x\, a+ y\, b).
\end{array}
$$
Then
$$
(\nabla \circ (s_\rho)_\ast )\left(\tfrac{\partial\phantom{x}}{\partial x}\right)= a \quad \textrm{ and }\quad 
(\nabla \circ (s_\rho)_\ast )\left(\tfrac{\partial\phantom{x}}{\partial y}\right)= b\, .
$$
We vary the representation by varying $a$ and $b$, so
$$
(\dot \nabla \circ (s_\rho)_\ast )\left(\tfrac{\partial\phantom{x}}{\partial x}\right)= \dot a \quad \textrm{ and }\quad
(\dot\nabla \circ (s_\rho)_\ast )\left(\tfrac{\partial\phantom{x}}{\partial y}\right)= \dot b\, .
$$

\begin{lemma} \label{lemma:evaltorus}
For $c\in C^1(\mathfrak{g},\mathfrak{g}^\vee)$ and a variation as above, 
$$
\int_{\partial M}  (s_\rho)^\ast(\operatorname{Char}_{\nabla_t}(\Fu (c)) )=  c(a)(\dot b)-c(b)(\dot a)\, .
$$
\end{lemma}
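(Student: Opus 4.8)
The plan is to compute both maps entirely explicitly and reduce the integral to the evaluation of a single constant $2$-form on the two coordinate vector fields of the torus. Since $c\in C^1(\mathfrak{g};\mathfrak{g}^\vee)$, the relevant instance of Fuks' map $\Fu$ is the one with $n=2$. First I would specialize its defining formula, where the sign is $(-1)^{n-i}$, to obtain the $2$-cochain on the current algebra $\widetilde{\mathfrak{g}}$ given by
$$
\Fu(c)(\phi_1,\phi_2)= \big[c(\phi_1(0))\big]\big(\dot\phi_2(0)\big)-\big[c(\phi_2(0))\big]\big(\dot\phi_1(0)\big).
$$

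Next I would apply the characteristic class map, using the interpretation of the family $\nabla_t$ as a single connection with values in $\widetilde{\mathfrak{g}}=\mathcal{C}^1(\mathbb{R},\mathfrak{g})$: for a vector field $X$ on $E$ the associated path is $t\mapsto\nabla_t(X)$, whose value at $0$ is $\nabla_0(X)$ and whose derivative at $0$ is $\dot\nabla_0(X)$. Feeding these into $\Fu(c)$ shows that $\operatorname{Char}_{\nabla_t}(\Fu(c))$ is the $2$-form
$$
(X_1,X_2)\longmapsto \big[c(\nabla_0X_1)\big]\big(\dot\nabla_0X_2\big)-\big[c(\nabla_0X_2)\big]\big(\dot\nabla_0X_1\big).
$$

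I would then pull this form back along $s_\rho$ and evaluate it on the coordinate vector fields $\partial/\partial x,\partial/\partial y$ of the torus. Using the computations immediately preceding the lemma, namely $(\nabla\circ(s_\rho)_\ast)(\partial/\partial x)=a$, $(\nabla\circ(s_\rho)_\ast)(\partial/\partial y)=b$ and $(\dot\nabla\circ(s_\rho)_\ast)(\partial/\partial x)=\dot a$, $(\dot\nabla\circ(s_\rho)_\ast)(\partial/\partial y)=\dot b$, the pulled-back form evaluated on $(\partial/\partial x,\partial/\partial y)$ equals exactly $c(a)(\dot b)-c(b)(\dot a)$. Because the representation factors through the abelian quotient $\Delta_n$ and the developing map $\exp(x\,a+y\,b)$ is invariant under the lattice of translations, this value is independent of the point of the torus; hence the $2$-form is the constant $c(a)(\dot b)-c(b)(\dot a)$ times $dx\wedge dy$, and integrating over $T^2=\mathbb{R}^2/\mathbb{Z}^2$ gives the claimed formula.

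The only point requiring care is the bookkeeping of signs and orientations: I would double-check the sign $(-1)^{n-i}$ in the definition of $\Fu$ for $n=2$ and confirm that the boundary orientation fixed through the generators $\alpha,\beta$ makes $\int_{T^2}dx\wedge dy=+1$, so that no spurious sign enters the final answer.
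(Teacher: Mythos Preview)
Your proposal is correct and follows essentially the same approach as the paper's own proof: both compute $\operatorname{Char}_{\nabla_t}(\Fu(c))$ explicitly as the $2$-form $(X_1,X_2)\mapsto c(\nabla_0 X_1)(\dot\nabla_0 X_2)-c(\nabla_0 X_2)(\dot\nabla_0 X_1)$, pull it back via $s_\rho$, evaluate on the coordinate fields $\partial/\partial x,\partial/\partial y$ using the values $a,b,\dot a,\dot b$ computed just before the lemma, and integrate the resulting constant multiple of $dx\wedge dy$ over the unit torus. Your version is slightly more explicit in unwinding the $n=2$ case of $\Fu$ and in justifying why the form is constant, but the argument is the same.
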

\begin{proof}
For $Z_1, Z_2$ vector fields on $E_\rho\vert_{\partial M}$,
$$
\operatorname{Char}_{\nabla_t}(\Fu (c))(Z_1,Z_2)= c(\nabla (Z_1))(\dot \nabla(Z_2))-c(\nabla (Z_2))(\dot \nabla(Z_1))
$$
Setting $Z_1=(s_\rho)_\ast \left(\frac{\partial\phantom{x}}{\partial x}\right) $ and 
 $Z_2=(s_\rho)_\ast \left(\frac{\partial\phantom{y}}{\partial y}\right) $,
 $\nabla (Z_1)=a$, $\dot \nabla (Z_1)=\dot a$, $\nabla (Z_2)=b$, $\dot\nabla (Z_2)=\dot b$,
 hence
 $$
  (s_\rho)^\ast(\operatorname{Char}_{\nabla_t}(\Fu (c)) )=  (c(a)(\dot b)-c(b)(\dot a)) d\, x\wedge d\, y
 $$
 As $\int_{\partial M} d\, x\wedge d\, y=1$, the lemma follows.
 \end{proof}

\begin{corollary}
\label{corollary:formula}

If $a,b,\dot a, \dot b\in\mathfrak{b}_n$, then the evaluation of the cocycle $\zeta$ is as in Proposition~\ref{prop:cocyclevariation}
at the cochain in $C^1(T^2; \mathfrak{b}_n, \mathfrak{b}_n')$ is:
$$
 \tr( \Re(b)\Im(\dot a) -\Re(a)\Im(\dot b) )\, ,
$$
where $\Re$ and $\Im$ denote the usual real and imaginary part of the coefficients.
\end{corollary}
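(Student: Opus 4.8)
The plan is to feed the cocycle $\zeta$ of Proposition~\ref{prop:cocyclevariation} into the evaluation formula of Lemma~\ref{lemma:evaltorus} and then carry out an elementary trace computation. Since on the boundary the variation is encoded by the diagonal logarithms $a,b$ and their derivatives $\dot a,\dot b$, applying Lemma~\ref{lemma:evaltorus} with $c=\zeta$ immediately reduces the integral over $\partial M$ to the purely algebraic quantity
\[
\int_{\partial M}(s_\rho)^\ast\big(\operatorname{Char}_{\nabla_t}(\Fu(\zeta))\big)=\zeta(a)(\dot b)-\zeta(b)(\dot a).
\]
It then remains to identify this expression with $\tr(\Re(b)\Im(\dot a)-\Re(a)\Im(\dot b))$.

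The second step is to unwind $\zeta$ in terms of real and imaginary parts. Writing $x_d$ for the diagonal (torus) part of $x\in\fb_n$, the orthogonal decomposition $\fsl_n=\fsu_n\oplus i\fsu_n$ restricts on diagonal matrices to the identifications $pr_{i\fh_n}(x)=\Re(x_d)$ and $pr_{\fh_n}(x)=i\,\Im(x_d)$: indeed the Hermitian part of a diagonal matrix is its real part (a real diagonal traceless matrix, hence in $i\fh_n$), while its anti-Hermitian part is $i$ times its imaginary part (a purely imaginary diagonal traceless matrix, hence in $\fh_n$). Substituting into $\zeta(x)(y)=i\tr(pr_{i\fh_n}(x)\,pr_{\fh_n}(y))$ and collecting the two factors of $i$ gives $\zeta(x)(y)=-\tr(\Re(x_d)\Im(y_d))$, whence
\[
\zeta(a)(\dot b)-\zeta(b)(\dot a)=\tr\big(\Re(b_d)\Im(\dot a_d)-\Re(a_d)\Im(\dot b_d)\big).
\]

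The final step is to drop the diagonal-part subscripts to match the stated formula. Here one uses that the off-diagonal contributions vanish under the trace: writing $b=b_d+b_u$ with $b_u\in\fut_n$ strictly upper triangular, every term of $\tr(\Re(b)\Im(\dot a))$ that involves a strictly upper triangular factor is traceless, since the product of a diagonal and a strictly upper triangular matrix (in either order) is again strictly upper triangular and the product of two strictly upper triangular matrices is nilpotent. Hence $\tr(\Re(b)\Im(\dot a))=\tr(\Re(b_d)\Im(\dot a_d))$, and likewise for the other term; this turns the previous display into exactly $\tr(\Re(b)\Im(\dot a)-\Re(a)\Im(\dot b))$, as claimed. (This also explains why the hypothesis allows $a,b,\dot a,\dot b\in\fb_n$ rather than just diagonal matrices.)

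The computation is essentially bookkeeping, so no deep obstacle arises; the delicate point is the sign and the factors of $i$ in the second step. One must verify that $\zeta(x)(y)=-\tr(\Re(x_d)\Im(y_d))$ with precisely this sign, so that the antisymmetrization produced by Lemma~\ref{lemma:evaltorus} yields $\Re(b)\Im(\dot a)-\Re(a)\Im(\dot b)$ and not its negative. This is also what pins down consistency with the overall sign convention of Definition~\ref{def:defvolplusprecis}, and hence with the statement of Theorem~\ref{Theorem:main}.
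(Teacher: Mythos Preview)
Your proof is correct and follows essentially the same route as the paper: apply Lemma~\ref{lemma:evaltorus} with $c=\zeta$ to reduce to $\zeta(a)(\dot b)-\zeta(b)(\dot a)$, then use the identifications $pr_{i\fh_n}=\Re\circ pr_{\fh+i\fh}$ and $pr_{\fh_n}=i\,\Im\circ pr_{\fh+i\fh}$ to rewrite this as the stated trace. Your final paragraph, arguing that the strictly upper triangular parts of $a,b,\dot a,\dot b$ contribute nothing to the trace, is a detail the paper leaves implicit (there $a,b$ are taken diagonal from the outset), so you have in fact filled a small gap between the corollary as stated for $a,b\in\fb_n$ and the paper's computation.
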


\begin{proof}
By  Lemma~\ref{lemma:evaltorus} and Proposition~\ref{prop:cocyclevariation},  the evaluation of $\zeta$ is 
$$
i(\tr(pr_{i\fh_n}(a)pr_{\fh_n}(\dot b))-pr_{i\fh_n}(b)pr_{\fh_n}(\dot a))
$$
Let $pr_{\mathfrak{h}+i\mathfrak{h} }\colon \mathfrak{b}_n\to\mathfrak{h}+i\mathfrak{h}  $ denote the projection to the diagonal part,
then, as $  \mathfrak{h}\subset\mathfrak{su}(n)$ is the subalgebra of diagonal matrices with zero real part,
$$
 pr_{\mathfrak{h}}=  i\, \Im\circ pr_{\mathfrak{h}+i\mathfrak{h} } \qquad  pr_{i\, \mathfrak{h}}=  \Re\circ pr_{\mathfrak{h}+i\mathfrak{h} }
$$
Thus 
\begin{eqnarray*}
i \tr(pr_{i\fh_n}(a)pr_{\fh_n}(\dot b)-pr_{i\fh_n}(b)pr_{\fh_n}(\dot a))& =&  i \tr( \Re(a) i \Im (\dot b)- \Re(b) i \Im (\dot a) )
\\
& = &
-  \tr( \Re(a) \Im (\dot b)- \Re(b)  \Im (\dot a) ).
\end{eqnarray*}
 \end{proof}

This concludes the proof of the main theorem.

\subsection{Comparison with other variation formulas}
\label{subsection:n=2}

When $n=2$, we can write  $$a=\begin{pmatrix}
                   \frac{l_1+i\theta_1}2 & 0 \\
                   0 & -\frac{l_1+i\theta_1}2
                  \end{pmatrix}
                  \qquad \textrm{ 
                  and }\qquad
                  b=\begin{pmatrix}
                   \frac{l_2+i\theta_2}2 & 0 \\
                   0 & -\frac{l_2+i\theta_2}2
                  \end{pmatrix}.
                  $$
Hence $\exp(a)$ is an hyperbolic isometry with translation length $l_1$ and rotation angle $\theta_1$, and so is
$\exp(b)$  with parameters $l_2$ and $\theta_2$. Then,
by Corollary~\ref{corollary:formula}, the contribution to 
the variation of volume of the corresponding torus component is
$$
\tr(\Re(b)\Im(\dot a)- \Re(a)\Im(\dot b))
     = \frac{1}{2}(l_2\dot \theta_1-l_1\dot\theta_2),
$$
which is precisely  Hodgson's formula in \cite{Hodgson}, as he derived from 
 Schl\"afli's formula for the  variation of the volume for polyhedra in hyperbolic space.

Still in the case $n=2$ Neumann and Zagier \cite{NZ} study the space of hyperbolic structures on a manifold by studying triangulations by ideal hyperbolic simplices.
To each hyperbolic ideal triangulation there is a  natural assignment of a holonomy representation in $\operatorname{PSL}_2(\C)$, and 
its volume is then  just the addition of the volumes of the tetrahedra involved. 

For an arbitrary value of $n$, variational formulas for the volume have been obtained in remarkable work by several authors  using spaces of decorated ideal triangulations and the Bloch group, see for instance \cite{GTZ}. Here we shall briefly describe the approach of \cite{BFG} and \cite{DGG} and relate their formulas to ours.

For $n=3$, Bergeron-Falbel-Guilloux \cite{BFG} consider ideal hyperbolic tetrahedra  with an additional decoration by flags 
in $\mathbb P^2(\C)$ (see also  \cite{GTZ}). Under some compatibility conditions one gets back  the manifold  equipped with a decorated hyperbolic structure, to which
one can associate a holonomy in $\operatorname{PSL}_3(\C)$, as well as a flag to each peripheral subgroup (equivalently this fixes yields a Borel subgroup for the holomomy of each peripheral subgroup). Pushing this data to the Bloch group gives then a volume for the holonomy. 

Firstly the volume in \cite{BFG} is $1/4$ of ours, they chose a normalization of the volume such that composing with the irreducible representation 
$\sigma_3\colon \operatorname{SL}_2(\C)\to \operatorname{SL}_3(\C)$ does not change the volume (in our case, by Proposition~\ref{prop:volsigman} it is multiplied by $4$).
Secondly, they have a different choice of   coordinates in $\operatorname{PSL}_3(\C)$: the holonomy of the peripheral elements $m$ and $l$ is, given respectively by, 
\begin{equation}
\label{eqn:holonomyPGL3}  
\begin{pmatrix}
 \frac{1}{A^*} & * & * \\
 0 & 1 & * \\
 0 & 0 & A
\end{pmatrix}
\quad\textrm{ and } \quad
\begin{pmatrix}
 \frac{1}{B^*} & * & * \\
 0 & 1 & * \\
 0 & 0 & B
\end{pmatrix},
\end{equation}
\cite[\S 5.5.2]{BFG}.
Then Proposition 11.1.1 of \cite{BFG} states that each end contributes to the variation of volume by a term
\begin{equation}
\label{eqn:BFG}  
\frac{1}{12}\Im(d\log\wedge_\Z\log)(2\, A\wedge _\Z B + 2\, A^*\wedge _\Z B^* +A^*\wedge _\Z B  + A\wedge _\Z B^* ) ,
\end{equation}
where  $\wedge_\Z$ stands for the wedge product as $\Z$-modules of the space of analytic functions on the space of decorated structures, and
\begin{equation}
\label{eqn:wedgeZ}
 \Im(d\log\wedge_\Z\log) (f \wedge _\Z g) = \Im\big(\log \vert g \vert \cdot d(\log f) -\log \vert f \vert \cdot d(\log g) \big)
\end{equation}
for any pair of analytic functions $f$ and $g$. Then, after a change of coordinates in  $\operatorname{PSL}_3(\C)$,
it is  straightforward to check that  \eqref{eqn:BFG} is $1/4$ of  Corollary~\ref{corollary:formula} for $\operatorname{SL}_3(\C)$.


When $n\geq3$, Dimofte, Gabella, and Goncharov in \cite{DGG} also consider the space of framed flat connection. Thies yields  decorated ideal triangulations by means of flags in 
$\mathbb P^{n-1}(\C)$ and they generalize Equation~\eqref{eqn:BFG}. In their work then,  the holonomy of the peripheral elements $l$ and $m$ (resp. $a$ and $b$ in our setting ) is given by  
\begin{equation*}
 \begin{pmatrix}
  1 & 0   &    0   &         & 0\\
  * & l_1 &    0   &         & 0\\
  * & *   & l_1l_2 &         & 0\\
    &     &        & \ddots  & \\
  *  &	*  &   *     &         & l_1\dots l_{n-1}
 \end{pmatrix}
\quad \textrm{ and } \quad 
 \begin{pmatrix}
  1 & 0   &    0   &         & 0\\
  * & m_1 &    0   &         & 0\\
  * & *   & m_1m_2 &         & 0\\
    &     &        & \ddots  & \\
  *  &	*  &   *     &         & m_1\dots m_{n-1}
 \end{pmatrix},
\end{equation*}
 \cite[(3.42)]{DGG}.
 If one denotes by $\kappa$ the Cartan matrix of size $n-1$ given by
\begin{equation*}
\kappa_{ij}=\begin{cases}
2  & \text{for } i=j,\\
-1 & \text{for } i=j\pm 1, \\
0 & \text{otherwise.}
\end{cases}
\end{equation*} 
then the contribution of each peripheral group to the variation of volume is then (\cite[(4.52) and (4.53)]{DGG}):
\begin{equation}
 \label{eqn:DGG}
 \log d\arg \sum_{i,j=1}^n(\kappa^{-1})_{ij}l_i\wedge m_j.
\end{equation}
Here (\cite[4.60]{DGG}):
\begin{equation*}
\log d\arg (f\wedge g)= \log |f| d\arg g-\log |g| d\arg f 
\end{equation*}
  is the exact
 the analog of \eqref{eqn:wedgeZ}.

Again, an easy computation shows that \eqref{eqn:DGG} is the same formula as Corollary~\ref{corollary:formula}.

As conclusion, our work gets back exactly the same formula as in \cite{BFG} and \cite{DGG} but with the advantage that we do not have to bother 
about the existence of decorated ideal triangulations (the existence of non-degenerate ideal triangulations for the complete structure
 still remains conjectural).

%

\bibliographystyle{plain}\label{biblography}

\end{document}